\theoremstyle{plain}%
 \newtheorem{theorem}{Theorem}
 \newtheorem{lemma}{Lemma}% 
\theoremstyle{remark}
\newtheorem{remark}{Remark}
\theoremstyle{definition}
\newtheorem{example}{Example}
\begin{document}

\begin{center}
{\Large Analytic dilogarithm identities}

 \ 

Cetin Hakimoglu-Brown

 \ 

\end{center}

\begin{abstract}
 We introduce dilogarithm identities through a beta integral-based technique that we apply to provide analytic proofs of previously 
 conjectured dilogarithm relations, solving open problems given by both Bytsko and Campbell, and that we further apply to construct 
 and prove new ladder relations with quartic and sextic bases. We also apply our method to introduce and prove two-term 
 $\operatorname{Li}_2$-relations and ladder-like identities with arguments in algebraic number fields such as $\mathbb{Q}(\sqrt{2})$, 
 $\mathbb{Q}(\sqrt{3})$, $\mathbb{Q}(\sqrt{5})$, and $\mathbb{Q}(\sqrt{-7})$. Moreover, single-term dilogarithm evaluations 
 are introduced and derived. 
\end{abstract}

\noindent {\footnotesize \emph{MSC:} 33B30, 33B15.}

\vspace{0.1in}

\noindent {\footnotesize \emph{Keywords:} 
 dilogarithm, hypergeometric series, beta function, closed form, polylogarithm, Clausen function.}

\section{Introduction}
 Higher logarithm functions are of fundamental importance in many areas of both physics and mathematics, and the dilogarithm function 
 $\operatorname{Li}_{2}$ may be seen as the most basic and fundamental function of this form. Polylogarithms arise in deep areas of 
 number theory and quantum theory, and the study of higher logarithm functions in its own right provides one of the core topics 
 concerning special functions. Zagier \cite{Zagier2007} built on the pioneering work of Nahm et al.\ 
 \cite{Nahm1995,Nahm2007,Nahm1994,NahmRecknagelTerhoeven1993} in ways that relate the use of higher $K$-groups and higher 
 Bloch groups to the behavior of the $\operatorname{Li}_{2}$-function, and the seminal nature of the monographs by Lewin on the 
 dilogarithm and polylogarithm functions \cite{Lewin1958,Lewin1981,Lewin1991text} further motivates the techniques and 
 $ \operatorname{Li}_{2}$-identities introduced in this paper, in which we solve open problems given by both Bytsko 
 \cite{Bytsko1999} and Campbell \cite{Campbell2025}. 

 The problem of evaluating inverse binomial series in terms of polylogarithms and related special functions often leads to rich areas of 
 research within computational and experimental mathematics \cite[\S1.7]{BorweinBaileyGirgensohn2004}. For hypergeometric series 
 of this form with half-integer parameters, the evaluation of such series is fairly well understood, and our work is inspired by a known 
 evaluation of this form introduced by D'Aurizio and subsequently explored by D'Aurizio and Di Trani \cite{DAurizioDiTrani2018}. By 
 letting the \emph{Pochhammer symbol} be such that $(x)_{n} = x(x+1) \cdots (x + n - 1)$ for $n \in \mathbb{N}_{0}$, and by letting 
 \emph{generalized hypergeometric series} be denoted and defined so that 
\begin{equation*}
 {}_{p}F_{q}\!\!\left[ \begin{matrix} a_{1}, a_{2}, \ldots, a_{p} \vspace{1mm}\\ b_{1}, b_{2}, \ldots, b_{q} \end{matrix} \ \Bigg| \ x 
 \right] = \sum_{n=0}^{\infty} \frac{ \left( a_{1} \right)_{n} \left( a_{2} \right)_{n} \cdots \left( a_{p} \right)_{n} }{ \left( b_{1} \right)_{n} 
 \left( b_{2} \right)_{n} \cdots \left( b_{q} \right)_{n} } \frac{x^{n}}{n!}, 
\end{equation*}
 the above referenced D'Aurizio evaluation is such that 
\begin{equation*}
 {}_{4}F_{3}\!\!\left[ \begin{matrix} 1, 1, 1, \frac{3}{2} \vspace{1mm}\\ \frac{5}{2}, \frac{5}{2}, \frac{5}{2} \end{matrix} \ \Bigg| \ 1 \right] = 
 27 \sum_{n\geq 0}\frac{16^n}{(2n+3)^3 (2n+1)^2 \binom{2n}{n}^2} = {\frac{27}{2}\left(7\,\zeta(3)+(3 - 
 2 G)\,\pi-12\right)} 
\end{equation*}
 for Ap\'{e}ry's constant $\zeta(3) = \frac{1}{1^3} + \frac{1}{2^3} + \cdots$ and Catalan's constant $G = \frac{1}{1^2} - \frac{1}{3^2} + 
 \frac{1}{5^2} - \cdots$. Relatively little is known when it comes to the evaluation of inverse binomial series that have third-integer 
 or quarter-integer or sixth-integer values as lower parameters, in contrast to the half-integer parameter values involved in D'Aurizio's 
 ${}_{4}F_{3}(1)$-series, but evaluations for ${}_{4}F_{3}$-series of the form 
\begin{equation}\label{4F3thirdinteger} 
 {}_{4}F_{3}\!\!\left[ \begin{matrix} 1, 1, 1, \frac{3}{2} \vspace{1mm}\\ 
 \frac{4}{3}, \frac{5}{3}, 2 \end{matrix} \ \Bigg| \ z \right] 
 = \frac{4}{9 z} \sum_{n=1}^{\infty} \left( \frac{27z}{4} \right)^{n} \frac{1}{n^2 \binom{3 n}{n}} 
\end{equation}
 and for equivalent or related series have been considered by a number of authors 
 \cite{AdegokeFrontczakGoy2024,Batir2005,CampbellLevrie2024,Chu2022,ChudnovskyChudnovsky1998,DAurizioDiTrani2018}, with 
 \eqref{4F3thirdinteger} admitting an analytic evaluation. Similar inverse binomial series related to the quarter-integer and sixth-integer 
 cases have been investigated, respectively, by Rathie and Shpot \cite{RathieShpot2025} and by Sun 
 et al.\ \cite{Sun2024,SunZhoupreprint}. 
 
 The evaluation techniques introduced in this paper are related to those in the above references on inverse binomial series, but may be 
 seen as providing a more comprehensive treatment of dilogarithm identities. Our beta integral-based series evaluation methods build 
 on our past work \cite{Hakimoglupreprint} and appear to be original. For background 
 material on computational approaches 
 toward the evaluation of series, we refer to the monograph due to Sofo \cite{SOFO2003}. 

\subsection{Preliminaries}
 The \emph{$\Gamma$-function} is among the most fundamental and ubiquitous special functions and may be defined via the Euler 
 integral $\Gamma(x) := \int_{0}^{\infty} u^{x-1} e^{-u} \, du$ for $\Re(x) > 0$ \cite[\S2]{Rainville1960}. The \emph{beta integral} is of 
 basic importance as a special function, in much the same way as the $\Gamma$-function, and may be defined so that 
\begin{equation}\label{betadefinition}
 B(p, q) := \int_{0}^{1} t^{p-1} (1-t)^{q-1} \, dt 
\end{equation}
 for $\Re(p) > 0$ and $\Re(q) > 0$ \cite[\S2.16]{Rainville1960}, so that 
\begin{equation}\label{betaevaluate}
 B(p, q) = \frac{ \Gamma(p) \Gamma(q) }{ \Gamma(p + q)}, 
\end{equation}
 again for $\Re(p) > 0$ and $\Re(q) > 0$. Our main family of dilogarithm identities is derived, as below, using \eqref{betadefinition} 
 together with the families of sextic integrals shown in Section \ref{sectionsextic} below. 

 The \emph{dilogarithm function} is a central object of study in our work and is given by the $m = 2$ case of the 
 \emph{polylogarithm function} such that $\operatorname{Li}_{m}(x) := \sum_{n=1}^{\infty} \frac{x^{n}}{n^m}$ for complex $x$ 
 such that $|x| \leq 1$. For a full review of background material on the dilogarithm, we again refer to the seminal monographs due to 
 Lewin concerning this function \cite{Lewin1958,Lewin1981,Lewin1991text}. By writing 
\begin{equation}\label{RogersLdefinition}
 L(x) := \frac{6}{\pi^2} \left( \operatorname{Li}_{2}(x) + \frac{1}{2} \ln x \ln(1-x) \right) 
\end{equation}
 to denote the Rogers $L$-function, a version of the five-term (Abel) functional equation such that $$ L(x) + L(y) = L(x y) + 
 L\left( \frac{x(1-y)}{1- x y} \right) + L\left( \frac{y(1-x)}{1-xy} \right) $$ is to be applied in our work. Other fundamental functional 
 equations for the dilogarithm that are to arise in our work include the duplication formula such that 
\begin{equation}\label{duplicationformula}
 2 \operatorname{Li}(x) + 2 \operatorname{Li}(-x) = \operatorname{Li}(x^{2}), 
\end{equation}
 together with Euler's reflection identity such that 
\begin{equation}\label{Eulerreflectionidentity}
 \operatorname{Li}_2(x) = -\operatorname{Li}_2(1 - x) + \frac{\pi^2}{6} - \ln(x) \ln(1 - x), 
\end{equation}
 along with Landen's dilogarithm identity such that 
\begin{equation}\label{Landenformula}
 \operatorname{Li}_{2}(-x) = -\operatorname{Li}_{2}\left( \frac{x}{1+x} \right) - \frac{1}{2} \ln^2(1+x). 
\end{equation}
 In addition to the Rogers $L$-function defined in \eqref{RogersLdefinition}, there are a number of well known special functions that 
 are closely related to $\operatorname{Li}_{2}$ and that are to be involved in our work. This includes the \emph{inverse tangent integral} 
 function $\text{Ti}_{2}$ \cite[\S2]{Lewin1981} such that $$ \text{Ti}(x) = \frac{1}{2i} \left( \operatorname{Li}_{2}(i 
 x) - \operatorname{Li}_{2}(-ix) \right) = \int_{0}^{x} \frac{ \operatorname{arctan} \, t}{t} \, dt$$ for $|x| \leq 1$, together with the 
 \emph{Legendre chi-function} $\chi_2$ \cite[\S1.8]{Lewin1981} such that $$ \chi_{2}(x) = \frac{1}{2} \operatorname{Li}_{2}(x) - 
 \frac{1}{2} \operatorname{Li}_{2}(-x) = \frac{1}{2} \int_{0}^{x} \ln\left( \frac{1+t}{1-t} \right) \frac{dt}{t} $$ for $|x| \leq 1$, along with 
 the \emph{Clausen function} $\text{Cl}_{2}$ \cite[\S4]{Lewin1981} such that $$ \text{Cl}_{2}(x) = \sum_{n=1}^{\infty} \frac{\sin n 
 x}{n^2} = -\int_{0}^{x} \ln\left( 2 \sin \frac{t}{2} \right) \, dt $$ for $|x| \leq 1$. 

 Mathematical objects referred to as \emph{dilogarithm ladders} are a main object to be explored in this paper, and our main 
 results include new ladder relations, inspired by the recent work of Campbell \cite{Campbell2025} on dilogarithm ladders. As in this 
 work by Campbell \cite{Campbell2025}, we reproduce the standard definition of a polylogarithm \emph{ladder} of a given weight $n$ 
 and index $N$ and $\mathbb{Q}$-algebraic argument $u$, and this refers to an expression of the form 
\begin{equation}\label{ladderdefinition}
 L_{n}(N, u) := \frac{\operatorname{Li}_{n}\left( u^{N} \right)}{N^{n - 1} } - \left( \sum_{r=0}^{N-1} \frac{A_{r} 
 \operatorname{Li}_{n}\left( u^{r} \right)}{r^{n-1}} + \frac{A_{0} \ln^{n}(u)}{n!} \right) 
\end{equation}
 for rational values $A_{r}$ such that the right-hand side of \eqref{ladderdefinition} reduces to 
\begin{equation}\label{Dscalar}
 L_{n} = \sum_{m=2}^{n} \frac{ D_{m} \ln^{n-m}(u) }{(n-m)!} 
\end{equation}
 for scalar coefficients $D_{m}$ \cite[p.\ 6]{Lewin1991text}. If such scalars are rational, then the ladder relation given by $ L_{n}(N, 
 u) $ reducing to \eqref{Dscalar} is said to be \emph{valid}. The valid ladder relations that we introduce as main results appear to be 
 inequivalent to previously known ladder relations, and we invite the interested reader to review past 
 and notable references related to 
 polylogarithm ladders 
 \cite{AbouzahraLewinXiao1987,CohenLewinZagier1992,Gangl2013,Lewin1993,Lewin1986,Lewin1991collection}. 

\section{Sextic integrals and dilogarithm identities}\label{sectionsextic}
 One of the main tools to be applied in this paper is given by the functional equations involved in Theorem \ref{maintheorem} below. 
 For our derivation of this Theorem, we require the integral identities such that 
\begin{equation}\label{integrandforsum1}
 \int_{0}^{h} \frac{(1 - y)^c y^d}{1 + g^2 (1 - y)^a y^b} \, dg = (1 - y)^{c - \frac{a}{2}} y^{d - \frac{b}{2}} 
 \operatorname{arctan}\left( h (1 - y)^{\frac{a}{2}} y^{\frac{b}{2}} \right) 
 \end{equation}
 for $\Re\left( \frac{ \left( 1 - y \right)^{-a/2} y^{-b/2} }{h} \right) \neq 0$ and such that 
\begin{equation}\label{integrandforsum2}
 \int_0^h \frac{(1 - y)^c y^d}{\sqrt{1 - g^2 (1 - y)^a y^b}} \, dg = (1 - y)^{c - \frac{a}{2}} y^{d - 
 \frac{b}{2}} \text{arcsin}\left( h (1 - y)^{\frac{a}{2}} y^{\frac{b}{2}} \right) 
 \end{equation}
 for $\Re\left( \frac{ \left( 1 - y \right)^{-a/2} y^{-b/2} }{h} \right) > 1$ and 
 for $\Re\left( \frac{1}{h \sqrt{ -(1-y)^{a} y^{b} }} \right) \neq 0$. 
 We refer to such integrals as \emph{sextic integrals}, in reference ot the sixfold combinations of parameters involved in each of 
 \eqref{integrandforsum1} and \eqref{integrandforsum2}. In view of the integrands among \eqref{integrandforsum1} and 
 \eqref{integrandforsum2}, we are to also make use of the power series identities such that 
\begin{equation}\label{powersum1}
 \sum_{n=0}^{\infty} (-1)^n g^{2n} y^{bn} (1 - y)^{an} = \frac{1}{1 + g^2 (1 - y)^a y^b}
\end{equation}
 and such that 
\begin{equation}\label{powersum2}
 \sum_{n=0}^{\infty} g^{2n} y^{bn} (1 - y)^{an} \frac{ \left( \frac{1}{2}
 \right)_n}{(1)_n} = \frac{1}{\sqrt{1 - g^2 (1 - y)^a y^b}} 
\end{equation} 
 for a suitable radius of convergence. Informally, by multiplying both sides of \eqref{powersum1} by the integrand factors indicated in 
 \eqref{displayw1} and \eqref{displayw2} below, and by then using the dominated convergence theorem to apply term-by-term 
 integration to the infinite sums that we obtain from \eqref{powersum1} and \eqref{powersum2}, 
 we would then use the beta integral identity in \eqref{betaevaluate}, 
 following a similar approach as in our past work \cite{Hakimoglupreprint}, 
 with the equality of the values $w_{1}$ and $w_{2}$ shown in 
 \eqref{displayw1} and \eqref{displayw2} providing a key tool in our work. 

 We make use of the notational shorthand such that 
\begin{equation*}
 \left[ \begin{matrix} \alpha, \beta, \ldots, \gamma \vspace{1mm} \\ 
 A, B, \ldots, C \end{matrix} \right]_{n} = \frac{ (\alpha)_{n} (\beta)_{n} 
 \cdots (\gamma)_{n} }{ (A)_{n} (B)_{n} \cdots (C)_{n}}. 
\end{equation*}
 From the sextic integral relation in \eqref{integrandforsum1}, we obtain that 
\begin{equation}\label{addedexplanation1}
 \frac{ \operatorname{arctan}\left( h (1 - y) \sqrt{y} \right) }{2 h y} = \int_{0}^{h} 
 \frac{1-y}{2 h \sqrt{y} \left(1 + g^2 (1-y)^2 y \right)} \, dg, 
\end{equation}
 and, from the power series expansion in \eqref{powersum1}, we obtain that 
\begin{equation}\label{addedexplanation2}
 \frac{1-y}{2 h \sqrt{y} \left(1 + g^2 (1-y)^2 y \right)} 
 = \frac{1}{2h } \sum_{n=0}^{\infty} (-1)^{n} g^{2n} y^{n - \frac{1}{2}} (1-y)^{2n+1}, 
\end{equation}
 and from \eqref{addedexplanation1} and \eqref{addedexplanation2} together, we obtain that 
\begin{equation}\label{displayw1}
 w_1 = \int_0^1 \frac{\arctan\left(h (1 - y) \sqrt{y}\right)}{ 2 h y} \, dy = 
 \int_{0}^{1} \frac{ (3x^2 - 1) \ln x }{1 + h^2 x^2 (1-x^2)^2} \, dx
 = \frac{ 2 s_1}{3}, 
\end{equation}
 where $s_1$ is such that 
\begin{equation}\label{displays1}
 s_1 = \sum_{n=0}^{\infty} 
 h^{2n} \left( -\frac{4}{27} \right)^{n} 
 \left[ \begin{matrix} \frac{1}{2}, \frac{1}{2}, 1 
 \vspace{1mm} \\ 
 \frac{5}{6}, \frac{7}{6}, \frac{3}{2} \end{matrix} \right]_{n}. 
\end{equation}
 A symmetric approach gives us the companion to \eqref{displayw1} such that 
\begin{equation}\label{displayw2}
 w_2 = \int_0^1 \frac{\arctan\left(h (1 - y) \sqrt{y}\right)}{h(1 - y)} \, dy 
 = \int_{0}^{1} \frac{ (1-3x^2) \ln(1-x^2) }{1 + h^2 x^2 (1-x^2)^2} \, dx = \frac{ 2 s_1}{3}, 
\end{equation}
 and the key insight associated with one of the main techniques in this paper is given by how $\frac{w_1}{w_2}$ is a constant, as this can 
 be used, as we demonstrate, to produce dilogarithm identities via a ``shifting'' applied related ${}_{4}F_{3}$-series. 

 Our method may be thought of as having the advantage of avoiding the double integrals required in the related work of Batir 
 \cite{Batir2005}. By iterating substitutions and taking multiple integrals that may be expressed as \eqref{displays1} in terms of 
 $g$, it is typically feasible to compute order-$n$ polylogarithms without having to compute a multivariable integral of 
 \eqref{powersum1}. For example, applying our iterative approach in a twofold manner using \eqref{integrandforsum2} and 
 \eqref{powersum2}, we obtain 
\begin{multline*}
 s_2 = \int_0^1 \frac{x(1 - 3x^2) \arcsin \left( g x (x^2 - 1) \right)}{x^2 - 1} \, dx = \\ 
 g\int_0^1 \frac{\left( \frac{3}{2}(x^2 - 1) + \ln(x^2 - 1) \right) (3x^2 - 1)}{\sqrt{1 - g^2 x^2 (x^2 - 1)^2}} \, dx, 
\end{multline*}
 writing 
\begin{equation*}
 s_2 = -\frac{4g}{15} \sum_{n=0}^{\infty} g^{2n} \left( \frac{4}{27} \right)^{n} 
 \left[ \begin{matrix} \frac{1}{2}, \frac{1}{2}, \frac{1}{2} 
 \vspace{1mm} \\ 
 \frac{7}{6}, \frac{11}{6}, \frac{3}{2} \end{matrix} \right]_{n}. 
\end{equation*}
 By setting $g = ig_0$, and by then enforcing the substitution $g_0 = \frac{1}{u(1-u^2)}$, the 
 denominator of the latter integrals 
 among \eqref{displayw1} and \eqref{displayw2} factors into three quadratics, yielding the product 
\begin{equation}\label{productforpartial}
 (u^2+ux+x^2-1)(u^2-ux+x^2-1)(u^2-x^2) 
\end{equation}
 This leads us to our first main result below. 

\begin{theorem}\label{maintheorem}
 Let 
\begin{align*}
 A & := \operatorname{Li}_2\left(\frac{\sqrt{4 - 3u^2} + u^2 - 2}{u (u + 1)}\right) 
 - \operatorname{Li}_2\left(\frac{\sqrt{4 - 3u^2} + u^2 - 2}{(u - 1) u}\right), \\ 
 B & := 
 \frac{1}{2} \ln^2\left(\frac{-i \sqrt{3 u^2-4}+u+2}{4} \right) - 
 \frac{1}{2} \ln ^2\left(\frac{- i \sqrt{3 u^2 - 4} - u^2 +2}{u 
 (u+1)}\right) + \\ 
 & \quad \frac{1}{2} \ln ^2\left(\frac{i \sqrt{3 u^2 - 4} + u + 2}{4} \right)-\frac{\pi ^2}{6}, \\ 
 J & := \frac{1}{4} \ln^2 \left( \frac{-4}{-i \sqrt{3 u^2 - 4} + u + 2} \right) + 
 \frac{1}{4} \ln^2 \left( \frac{-4}{i \sqrt{3 u^2 - 4} + u + 2} \right) - \\ 
 &\quad \frac{1}{4} \ln^2 \left( \frac{-4}{-i \sqrt{3 u^2 - 4} - u + 2} \right) - \frac{1}{4} 
 \ln^2 \left( \frac{-4}{i \sqrt{3 u^2 - 4} - u + 2} \right), \\ 
 C & := - \frac{1}{2} \ln \left( \frac{ - i \sqrt{3 u^2 - 4} + u + 
 2}{4} \right) \ln \left( \frac{ i \sqrt{3 u^2 - 4} - u + 2}{4} \right) - \\ 
 & \quad \frac{1}{2} \ln \left( \frac{i \sqrt{3 u^2 - 4} + u + 2}{4} \right) 
 \ln \left( \frac{- i \sqrt{3 u^2 - 4} - u + 2}{4} \right) + \frac{\pi^2}{6}, \\ 
 H & := 
 \frac{1}{4} \ln^2\left(\frac{2}{u-1}\right)-\frac{1}{4} \ln ^2\left(-\frac{2}{u+1}\right)+\frac{1}{2} \ln \left(\frac{-u + 1}{2} \right) 
 \ln \left(\frac{u+1}{2}\right) - \\ 
 & \quad \frac{\pi i}{2} \ln \left(\frac{u + 
 1}{u-1}\right)-\ln (2) \ln \left(\frac{u+1}{u-1}\right)-\frac{\pi^2}{12}, \\ 
 D & := \operatorname{Li}_2 \left( \frac{1 - u}{2} \right) 
 + \ln(2) \ln \left( \frac{u - 1}{u + 1} \right) + \left( 2 \ln(2) + \frac{\pi i}{2} \right) \ln \left( \frac{u + 
 1}{u - 1} \right), \\ 
 K & := \frac{1}{2} \operatorname{Li}_2 \left( \frac{2}{- \sqrt{4 - 3 u^2} + u} \right) - 
 \frac{1}{2} \operatorname{Li}_2 \left( \frac{2}{\sqrt{4 - 3 u^2} - u} \right) \\ 
 &\quad - \frac{1}{2} \operatorname{Li}_2 \left( \frac{-2}{\sqrt{4 - 3 u^2} + u} \right) + 
 \frac{1}{2} \operatorname{Li}_2 \left( \frac{2}{\sqrt{4 - 3 u^2} + u} \right) + 
 \frac{1}{2} \operatorname{Li}_2 \left( \frac{-1}{u} \right) - 
 \frac{1}{2} \operatorname{Li}_2 \left( \frac{1}{u} \right). 
\end{align*} 
 Then, for all $u \in \mathbb{C} \setminus \mathbb{R}$ such that $\Re u \geq 0$ and such that the power series 
 in \eqref{displays3} converges, we have that 
\begin{equation}\label{mainABCD}
 A + B + J + C + H + D = s_3 = K, 
\end{equation}
 and that 
\begin{equation}\label{MequalsAB}
 M = A + B = 
 - \operatorname{Li}_{2}\left( \frac{\sqrt{4 - 3 u^2} - u + 2}{4} \right) 
 - \operatorname{Li}_{2}\left( \frac{-\sqrt{4 - 3 u^2} - u + 2}{4} \right), 
\end{equation}
 writing 
\begin{equation}\label{displays3}
 s_3 = \frac{2}{3u(u^2-1)} \sum_{n=0}^{\infty} \left( \frac{1}{u(u^2-1)} \right)^{2n} \left( \frac{4}{27} \right)^{n} 
 \left[ \begin{matrix} \frac{1}{2}, \frac{1}{2}, 1 
 \vspace{1mm} \\ 
 \frac{5}{6}, \frac{7}{6}, \frac{3}{2} \end{matrix} \right]_{n}. 
\end{equation}
 Moreover, for $u \in \mathbb{R}_{\geq 0}$ such that \eqref{displays3} converges, by letting $v \in \mathbb{C} \setminus \mathbb{R}$ 
 approach $u$ and taking the corresponding limits on all sides of the equalities among \eqref{mainABCD}, these limits are equal, 
 and similarly for \eqref{MequalsAB}. 
\end{theorem}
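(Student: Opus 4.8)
The plan is to realize both closed forms in \eqref{mainABCD} as two different evaluations of the \emph{same} rational integral, obtained from the sextic-integral series $s_1$ of \eqref{displays1} by the substitution that sends the upper $g$-limit to $h=i/(u(1-u^2))$, i.e.\ $h^2\mapsto -1/(u^2(1-u^2)^2)$. First I would record the elementary algebraic identity
\[
[u(1-u^2)]^2-[x(1-x^2)]^2=(u^2-x^2)(u^2+ux+x^2-1)(u^2-ux+x^2-1),
\]
so that under this substitution the denominator $1+h^2x^2(1-x^2)^2$ in \eqref{displayw1}--\eqref{displayw2} becomes the product \eqref{productforpartial} divided by $u^2(1-u^2)^2$, while the series $s_1$ turns termwise into the series of \eqref{displays3}; matching the leftover scalar shows that $\tfrac{2s_1}{3}\big|_{\mathrm{sub}}=u(u^2-1)\,s_3$. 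Thus, after the substitution, both $w_1$ and $w_2$ reduce to $u(u^2-1)\int_0^1\frac{(3x^2-1)\ln x}{(u^2-x^2)(u^2+ux+x^2-1)(u^2-ux+x^2-1)}\,dx$ (respectively its $\ln(1-x^2)$ analogue) and equal $s_3$; the content of \eqref{mainABCD} is to evaluate each in closed form.

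For the equality $s_3=K$ I would use the $\ln x$ form $w_1$. Partial-fractioning $\dfrac{3x^2-1}{(u^2-x^2)(u^2+ux+x^2-1)(u^2-ux+x^2-1)}$ over the six roots $\pm u,\ \tfrac{\pm u\pm\sqrt{4-3u^2}}{2}$ and integrating each simple fraction against $\ln x$ via the standard evaluation $\int_0^1\frac{\ln x}{x-r}\,dx=\operatorname{Li}_2(1/r)$ produces exactly the six dilogarithms $\operatorname{Li}_2(1/r_i)$ appearing in $K$, with the residue weights collapsing to the coefficients $\pm\tfrac12$ after multiplication by the prefactor $u(u^2-1)$ and by $3x^2-1$ at each root. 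For the equality $A+B+J+C+H+D=s_3$ I would instead use the $\ln(1-x^2)$ form $w_2$: splitting $\ln(1-x^2)=\ln(1-x)+\ln(1+x)$ and applying the companion evaluations of $\int_0^1\frac{\ln(1\pm x)}{x-r}\,dx$ (each a dilogarithm plus a $\ln^2$/product-of-logs remainder) at the same six roots yields the $\operatorname{Li}_2$-terms constituting $A$ and $D$, while the logarithmic remainders are collected into $B$, $J$, $C$, and $H$. Since $w_1=w_2=\tfrac{2s_1}{3}$ before the substitution, both closed forms equal $s_3$, which gives the full chain \eqref{mainABCD}.

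The identity \eqref{MequalsAB} isolating $M=A+B$ is a pure functional-equation reduction. Here I would apply Landen's identity \eqref{Landenformula} and Euler reflection \eqref{Eulerreflectionidentity} to the two dilogarithms of $A$ to move their arguments onto the common form $\frac{\pm\sqrt{4-3u^2}-u+2}{4}$, absorbing the resulting $\ln^2$ and cross-log terms against $B$; the five-term relation for $L$ in \eqref{RogersLdefinition} (together, if needed, with the duplication formula \eqref{duplicationformula}) then collapses the combination to the stated two-term expression. Finally, for the real-$u$ clause I would invoke continuity: $\operatorname{Li}_2$ is continuous on $\mathbb{C}\setminus[1,\infty)$ and $\ln$ on $\mathbb{C}\setminus(-\infty,0]$, and the series \eqref{displays3} is continuous where it converges, so letting $v\to u$ through $\mathbb{C}\setminus\mathbb{R}$ the two sides of \eqref{mainABCD} and \eqref{MequalsAB}---which agree on the non-real set---have equal limits, with the finitely many arguments that approach a branch point handled by a one-sided limit.

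The main obstacle I anticipate is branch tracking for complex $u$: the six roots $r_i$ and the quantities $\sqrt{4-3u^2}$ and $\sqrt{3u^2-4}=i\sqrt{4-3u^2}$ are multivalued algebraic functions of $u$, and the statement freely mixes $\sqrt{4-3u^2}$, $\sqrt{3u^2-4}$, and explicit factors of $i$. Fixing a consistent branch of $\sqrt{\,\cdot\,}$ on the half-plane $\Re u\ge 0$ and carrying it correctly through the partial-fraction integrals---so that the $\ln^2$ remainders land on precisely the arguments written in $B$, $C$, $H$, $J$ rather than on branch-shifted variants---is where the real work lies; the functional-equation step for \eqref{MequalsAB} is delicate for the same reason, since each application of Landen and of reflection carries a $\ln$-correction whose branch must be matched exactly.
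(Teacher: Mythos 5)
Your proposal is correct and follows essentially the same route as the paper: substitute $h=\pm i/(u(u^2-1))$ into the sextic integrals \eqref{displayw1}--\eqref{displayw2} so the denominator factors as \eqref{productforpartial}, apply partial fractions and evaluate the resulting elementary dilogarithm integrals at the six roots to obtain $K$ from the $\ln x$ integral and $A+B+J+C+H+D$ from the companion integral, then pass to real $u$ by analytic continuation. The paper is terser (it exhibits the antiderivative and evaluates at $x\to 0,1$, dispatching the remaining equalities with ``similarly''), but your fleshed-out version, including the branch-tracking caveat, matches its substance.
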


\begin{proof}
 This follows from our sextic integral identities, by applying partial fraction decomposition to the reciprocal of the product in 
 \eqref{productforpartial}, and by then applying this partial fraction decomposition within the appropriate integrands in 
 \eqref{displayw1} and \eqref{displayw2}, and by then evaluating the indefinite integrals corresponding to the definite integrals we 
 obtain by expanding the aforementioned integrands. More explicitly, in the latter equality in 
 \eqref{displayw1}, we set $-\frac{i}{u \left(u^2-1\right)}$, yielding 
\begin{multline*}
 \int_{0}^{1} \frac{\left(3 x^2 - 
 1\right) \ln (x)}{\left(u^2+u x+x^2-1\right) \left(u^2-u x+x^2-1\right) \left(u^2-x^2\right)} \, dx = \\ 
 \frac{2}{3 (u-1)^2 u^2 (u+1)^2} \sum_{n=0}^{\infty} 
 \left( \frac{1}{u(u^2-1)} \right)^{2n} \left( \frac{4}{27} \right)^{n} 
 \left[ \begin{matrix} \frac{1}{2}, \frac{1}{2}, 1 
 \vspace{1mm} \\ 
 \frac{5}{6}, \frac{7}{6}, \frac{3}{2} \end{matrix} \right]_{n}, 
\end{multline*}
 and, by applying partial fraction decomposition 
 to the above integrand, an indefinite integral 
 corresponding to the resultant integrand is equivalent to 
\begin{multline*}
 \frac{1}{2 u-2 u^3} \Bigg( \operatorname{Li}_2\left(-\frac{2 x}{u-\sqrt{4-3 u^2}}\right) - 
 \operatorname{Li}_2\left(\frac{2 x}{u-\sqrt{4-3 u^2}}\right)+\operatorname{Li}_2\left(-\frac{2
 x}{u+\sqrt{4-3 u^2}}\right) - \\ 
 \operatorname{Li}_2\left(\frac{2 x}{u+\sqrt{4-3
 u^2}}\right)-\operatorname{Li}_2\left(-\frac{x}{u}\right)+\operatorname{Li}_2\left(\frac{x}{u}\right) + \ln (x) \ln \left(\frac{\sqrt{4-3 u^2}-u-2
 x}{\sqrt{4-3 u^2}-u}\right) - \\ 
 \ln (x) \ln \left(\frac{\sqrt{4-3 u^2}+u-2 x}{\sqrt{4-3 u^2}+u}\right)-\ln (x) \ln \left(\frac{\sqrt{4-3
 u^2}-u+2 x}{\sqrt{4-3 u^2}-u}\right) + \\ 
 \ln (x) \ln \left(\frac{\sqrt{4-3 u^2}+u+2 x}{\sqrt{4-3 u^2}+u}\right)+\ln (x) \ln
 \left(\frac{u-x}{u}\right)-\ln (x) \ln \left(\frac{u+x}{u}\right) \Bigg). 
\end{multline*}
 By then setting $x \to 1$ and $x \to 0$, we obtain an equivalent version of the desired equality $s_3 = K$, 
 and similarly for 
 $ A + B + J + C + H + D = s_3$ and for $ M = A + B$, for the case whereby
 $u \in \mathbb{C} \setminus \mathbb{R}$ satisfies the specified convergence condition, 
 and an analytic continuation then gives us the desired result for real arguments. 
\end{proof}

\section{Applications}
 Using Theorem \ref{maintheorem}, we derive evaluations for accelerated ${}_{4}F_{3}$-series and dilogarithm identities, as below. 

\subsection{A full solution to a problem due to Sun}
 As it turns out, the $u = \frac{1}{\sqrt{3}}$ case Theorem \ref{maintheorem} can be used to formulate a full solution to a problem due 
 to Sun \cite{Sun2021,Sun2015} that was partially solved by Campbell \cite{Campbell2023}, as later explained. 

\begin{theorem}\label{theoremSunCampbell}
 The closed-form evaluation
 $$ 8 \operatorname{Li}_2\left(2-\sqrt{3}\right)-\operatorname{Li}_2\left(4 \sqrt{3} - 7 \right) 
 = \frac{5 \pi ^2}{12}+\ln \left(2-\sqrt{3}\right) \ln \left(2+\sqrt{3}\right) $$
 holds true.
\end{theorem}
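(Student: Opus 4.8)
The plan is to specialize Theorem~\ref{maintheorem} to $u = \frac{1}{\sqrt{3}}$ and then reduce the resulting dilogarithm relation to the two-term form in the statement. First I would record that at $u = \frac{1}{\sqrt{3}}$ one has $4 - 3u^2 = 3$, so $\sqrt{4-3u^2} = \sqrt{3}$ and every dilogarithm and logarithm argument appearing in $A, B, J, C, H, D, K$ lands in $\mathbb{Q}(\sqrt{3})$. I would also record the elementary algebraic facts $2+\sqrt{3} = (2-\sqrt{3})^{-1}$ and $4\sqrt{3} - 7 = -(2-\sqrt{3})^2$, so that $\operatorname{Li}_2(4\sqrt{3}-7) = \operatorname{Li}_2(-(2-\sqrt{3})^2)$ and $\ln(2-\sqrt{3})\ln(2+\sqrt{3}) = -\ln^2(2-\sqrt{3})$; the target is therefore equivalent to $8\operatorname{Li}_2(2-\sqrt{3}) - \operatorname{Li}_2(-(2-\sqrt{3})^2) = \frac{5\pi^2}{12} - \ln^2(2-\sqrt{3})$, which makes the roles of the duplication formula \eqref{duplicationformula} and of the base argument $2-\sqrt{3}$ transparent.

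The crucial observation driving the reduction is that the quantity $A$ of Theorem~\ref{maintheorem} already carries this base: writing $\beta = 2-\sqrt{3}$, a short rationalization shows that at $u = \frac{1}{\sqrt{3}}$ the two arguments of $A$ simplify to $\beta^2$ and $-\beta$, so that $A = \operatorname{Li}_2(\beta^2) - \operatorname{Li}_2(-\beta)$, and then the duplication formula \eqref{duplicationformula} gives $A = 2\operatorname{Li}_2(\beta) + \operatorname{Li}_2(-\beta)$. I would then use the two conclusions of Theorem~\ref{maintheorem} simultaneously, namely the master identity $A+B+J+C+H+D = K$ together with the closed form $M = A+B$, in order to eliminate the series $s_3$ and obtain a single relation among dilogarithms evaluated at the $\mathbb{Q}(\sqrt{3})$-arguments $\beta, -\beta, \beta^2, \pm\sqrt{3}, \pm\frac{\sqrt{3}}{2}, \frac{3\pm\sqrt{3}}{6}, \frac{3-2\sqrt{3}}{6}$, with all remaining terms logarithmic.

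The next step is to funnel every one of these dilogarithms onto the pair $\operatorname{Li}_2(\beta)$, $\operatorname{Li}_2(-\beta^2)$. For the arguments $\pm\sqrt{3}$, which exceed $1$ in modulus, I would apply the inversion relation $\operatorname{Li}_2(z) + \operatorname{Li}_2(1/z) = -\frac{\pi^2}{6} - \frac{1}{2}\ln^2(-z)$ (itself a consequence of \eqref{Eulerreflectionidentity} and \eqref{Landenformula}) to replace them by $\operatorname{Li}_2(\pm\frac{1}{\sqrt{3}})$ up to logarithms; for the remaining arguments I would chain Euler's reflection \eqref{Eulerreflectionidentity}, Landen's identity \eqref{Landenformula}, the duplication formula \eqref{duplicationformula}, and, where an argument lies in a different anharmonic orbit from $\beta$, the five-term (Abel) relation, so as to rewrite each term as a rational multiple of $\operatorname{Li}_2(\beta)$ or $\operatorname{Li}_2(-\beta^2)$ plus elementary pieces. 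The coefficient $8$ in the statement is produced by the accumulation of these contributions, each duplication step doubling the weight carried by $\operatorname{Li}_2(\beta)$.

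The step I expect to be the main obstacle is the transcendental bookkeeping rather than the dilogarithm reductions themselves. Because $\sqrt{3u^2-4} = i\sqrt{3}$ at $u = \frac{1}{\sqrt{3}}$ while $\frac{1}{\sqrt{3}}$ is real, I would carry out the whole computation through the limit $v \to \frac{1}{\sqrt{3}}$ with $v \in \mathbb{C}\setminus\mathbb{R}$, exactly as sanctioned by the last clause of Theorem~\ref{maintheorem}, so that the branches of all the complex logarithms in $B, J, C, H, D$ and of $\operatorname{Li}_2(\sqrt{3})$ are pinned down. The arguments exceeding $1$ and these complex logarithms contribute $i\pi$- and $\pi^2$-terms, and the delicate point is to verify that the imaginary parts cancel identically and that the surviving $\ln^2$- and $\pi^2$-constants collapse to exactly $\frac{5\pi^2}{12} - \ln^2(2-\sqrt{3})$. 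Once the branch choices are fixed and these constants are checked, the identity follows.
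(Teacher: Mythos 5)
Your proposal follows essentially the same route as the paper: specialize Theorem~\ref{maintheorem} at $u=\tfrac{1}{\sqrt{3}}$, recognize that the arguments of $A$ become $(2-\sqrt{3})^{2}$ and $\sqrt{3}-2$, reduce the $K$-side arguments $\pm\sqrt{3}$, $\pm\tfrac{\sqrt{3}}{2}$ to the base $2-\sqrt{3}$ by standard functional equations (the paper does this in one stroke with the four-term relation \eqref{wellknown4term}, i.e.\ the map $z\mapsto\tfrac{1-z}{1+z}$, which is the specific transformation your generic chain of reflections and Landen/Abel steps would have to produce), and finish with the duplication formula at $x=2-\sqrt{3}$ to generate the coefficient $8$. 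The only inessential deviations are that the paper does not invoke \eqref{MequalsAB} (your stated reason for using it, eliminating $s_3$, is moot since \eqref{mainABCD} already yields $A+B+J+C+H+D=K$ with no series present) and that it handles the $D$-term via the Landen relation carrying $\tfrac{3-\sqrt{3}}{6}$ to $\sqrt{3}-2$.
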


\begin{proof}
 For the $u = \frac{1}{\sqrt{3}}$ case Theorem \ref{maintheorem}, 
 the left-hand side of \eqref{mainABCD} 
 may be expressed in terms of 
 $ -2 \operatorname{Li}_2 \left( 7 - 4 \sqrt{3} \right) + 
 4 \operatorname{Li}_2 \left( \sqrt{3} - 2 \right) $ and a closed-form expression, 
 and this follows from the known relation 
\begin{equation*}
-2 \operatorname{Li}_2 \left( \frac{3 - \sqrt{3}}{6} \right) = 2 \operatorname{Li}_2 \left( \sqrt{3} - 2 \right) + 
 \ln^{2} \left( \frac{3 + \sqrt{3}}{6} \right). 
\end{equation*}
 The right-hand side of \eqref{mainABCD}, for the same input value 
 $u = \frac{1}{\sqrt{3}}$ may be rewritten as 
 $$ K = \operatorname{Li}_2\left(-\sqrt{3}\right)-\frac{1}{2}
 \operatorname{Li}_2\left(-\frac{\sqrt{3}}{2}\right) + 
 \frac{\operatorname{Li}_2\left(\frac{\sqrt{3}}{2}\right)}{2}-\operatorname{Li}_2\left(\sqrt{3}\right). $$
 Using the well known $4$- term functional equation 
\begin{equation}\label{wellknown4term}
 \operatorname{Li}_2(z) - \operatorname{Li}_2(-z) + 
 \operatorname{Li}_2\left(\frac{1 - z}{1 + z}\right) - 
 \operatorname{Li}_2\left(\frac{-(1 - z)}{1 + z}\right) = \frac{\pi^2}{4} + \ln(z) \ln\left(\frac{1 + z}{1 - z}\right), 
\end{equation}
 we obtain, by setting $z = \frac{\sqrt{3}}{2}$ and $z=\sqrt{3}$, that the right-hand side of \eqref{mainABCD} may be rewritten as 
\begin{equation*}
 \operatorname{Li}_2\left(7 - 4 \sqrt{3}\right) - \operatorname{Li}_2\left(4 \sqrt{3} - 7 \right) - 
 2 \, \operatorname{Li}_2\left(\sqrt{3} - 2\right) + 2 \, \operatorname{Li}_2\left(2 - \sqrt{3}\right). 
\end{equation*}
 Equating our equivalent expressions for the right-hand and left-hand sides of \eqref{mainABCD}, we obtain 
\begin{equation*}
 3 \, \operatorname{Li}_2\left(7 - 4 \sqrt{3}\right) - 
 \operatorname{Li}_2\left(4 \sqrt{3} - 7\right) - 
 6 \, \operatorname{Li}_2\left(\sqrt{3} - 2\right) + 2 \, \operatorname{Li}_2\left(2 - \sqrt{3}\right)
\end{equation*}
 and 
\begin{equation*}
 3 \, \operatorname{Li}_2\left(7 - 4 \sqrt{3}\right) = 6 \, 
 \operatorname{Li}_2\left(2 - \sqrt{3}\right) + 6 \, \operatorname{Li}_2\left(\sqrt{3} - 2\right), 
\end{equation*}
 and this gives us an equivalent version of the desired result. 
\end{proof}

 It was shown by Campbell \cite{Campbell2023} that the open problem due to Sun \cite{Sun2021,Sun2015} given by proving the 
 conjectured formula 
\begin{equation}\label{Sunconjectureharmonic}
 \sum_{n=0}^{\infty} \frac{ \binom{2n}{n} }{ (2n+1) 8^{n} } \left( \sum_{0 \leq k < n} 
 \frac{(-1)^k}{2k+1} - \frac{(-1)^n}{2n+1} \right) = -\frac{\sqrt{2}}{16} \pi^2 
\end{equation}
 is equivalent to the problem of proving the purported closed-form evaluation 
\begin{equation}\label{discoveredwithAu}
 \operatorname{Li}_{2}\left( \frac{\sqrt{3} + 2}{4} \right) - 8 \operatorname{Li}_{2}(2 - \sqrt{3}) 
 = -\frac{\pi^2}{4} - 2 \ln^{2}(2) + \frac{5}{2} \ln^2(\sqrt{3} + 2) - 2 \ln(\sqrt{3} + 2) \ln 2 
\end{equation}
 discovered experimentally using a Mathematica package given by Au, and referring to Campbell's work \cite{Campbell2023} for details. 
 It can be shown that Theorem \ref{theoremSunCampbell} is equivalent to \eqref{discoveredwithAu}, thus providing a full proof of Sun's 
 conjectured formula in \eqref{Sunconjectureharmonic}. 

\subsection{On an identity attributed to Lima}
 The $u = 
 \sqrt{2}$ case of Theorem \ref{maintheorem}, we obtain, from the $A$-value involved in Theorem \ref{maintheorem} together 
 with the duplication formula in \eqref{duplicationformula}, that 
\begin{multline*}
 \operatorname{Li}_2\left(-i\left(1 + \sqrt{2}\right)\right) - \operatorname{Li}_2\left(-i\left(-1 + \sqrt{2}\right)\right) = \\ 
 \frac{1}{2} \operatorname{Li}_2\left(-3 - 2 \sqrt{2}\right) + \frac{\pi^2}{6} + \frac{1}{2} \ln^2\left(i\left(\sqrt{2} - 1\right)\right) 
\end{multline*}
 and that 
\begin{equation*}
\operatorname{Li}_2\left(\frac{1 - \sqrt{2}}{2} 
 \right) = -\operatorname{Li}_2\left(3 - 2 \sqrt{2}\right) - \frac{1}{2} \ln^2\left(\frac{1}{2} + \frac{1}{\sqrt{2}}\right)
\end{equation*}
 For the right-hand side of the identity in \eqref{mainABCD}, the associated integral admits an evaluation that can be derived using a 
 factorization property associated with roots lying on the unit disk, yielding 
\begin{equation*}
 \int_0^1 \frac{(1 - 3x^2) \ln(x)}{(x^2 - 2)(x^4 + 1)} \, dx = 
 \frac{\sqrt{2}}{4} \left( -2 \, \operatorname{Li}_2\left( \frac{1}{\sqrt{2}} \right) + \frac{7\pi^2}{24} - \frac{(\ln(2))^2}{4} \right)
\end{equation*}
 We thus obtain the formula 
\begin{equation}\label{attributedLima}
 \operatorname{Li}_2\left( \frac{1}{\sqrt{2}} \right) - \operatorname{Li}_2\left( 1 - \sqrt{2} \right) - 
 \frac{\pi^2}{6} + \frac{\ln^2(2)}{8} - \frac{1}{2} \ln(2) \ln\left( \sqrt{2} - 1 \right) = 0 
\end{equation}
 that has been attributed to Lima \cite{Lima2012} and subsequently explored by Stewart \cite{Stewart2022} and generalized by Lima 
 \cite{Lima2024} and that can be derived using a classical relation for the Legendre's $\chi$-function together with the Landen 
 transform in \eqref{Landenformula}. 

\subsection{Zagier's four-term identity}
 Setting $u = \frac{4}{3}$ in 
 Theorem \ref{maintheorem}, it is possible, as below, to derive Zagier's four-term identity such that 
 \begin{equation}\label{Zagierfour}
 3 L\left(-\frac{1}{6}\right) - L\left(\frac{1}{8}\right) + L \left(\frac{1}{9}\right) + L \left(\frac{1}{28}\right) = -\frac{\pi^2}{12}. 
\end{equation}
 Using Theorem \ref{maintheorem}, we 
 provide a sketch of a proof of the Zagier identity in \eqref{Zagierfour}. On the left-hand side, 
 we apply the relation \eqref{MequalsAB} from Theorem \ref{maintheorem}, and we set $u = 
 \frac{4}{3}$, $s = 2$, $p = 3$, and $z = 
 \frac{1}{3}$ in the formula 
\begin{equation}\label{sigmasumLi}
\sum_{m=0}^{p-1} \operatorname{Li}_s(z e^{2\pi i m/p}) = p^{1-s} \operatorname{Li}_s(z^p),
\end{equation}
 Plugging in $u = \frac{4}{3}$ and using the four-term relation in \eqref{wellknown4term}, we obtain 
 \begin{multline*}
 - \mathrm{Li}_2\left(-\frac{1}{7}\right) + \mathrm{Li}_2\left(\frac{1}{7}\right) 
 - \mathrm{Li}_2\left(\frac{-6 - 3i\sqrt{3}}{7}\right) 
 + \mathrm{Li}_2\left(\frac{6 - 3i\sqrt{3}}{7}\right) \\ 
 - \mathrm{Li}_2\left(\frac{-6 + 3i\sqrt{3}}{7}\right) 
 + \mathrm{Li}_2\left(\frac{6 + 3i\sqrt{3}}{7}\right) 
 - \frac{\pi^2}{4} + \ln\left(\frac{4}{3}\right)\ln(7). 
\end{multline*}
 From Abel's formula, we find that 
\begin{multline*}
 \mathrm{Li}_2\left(\frac{z}{1 - z} \cdot \frac{w}{1 - w}\right) = \\
 \mathrm{Li}_2\left(\frac{z}{1 - w}\right) 
 + \mathrm{Li}_2\left(\frac{w}{1 - z}\right) 
 - \mathrm{Li}_2(z) - \mathrm{Li}_2(w) 
 - \ln(1 - z)\ln(1 - w), 
\end{multline*}
 writing 
\begin{equation*}
w = \frac{1}{\frac{1}{7}\left(-6 - 3i\sqrt{3}\right)} \ \ \ \text{and} \ \ \ 
z = \frac{1}{\frac{1}{7}\left(-6 + 3i\sqrt{3}\right)}, 
 \end{equation*}
 and we omit the elementary transformations we have applied, for the sake of brevity. This yields an expression that is again amenable 
 to \eqref{sigmasumLi}, so we are able to eliminate the the complex conjugate terms: 
\begin{multline*}
 \frac{\operatorname{Li}_2\left(\frac{1}{8}\right)}{3} - \operatorname{Li}_2\left(\frac{1}{4}\right) - \operatorname{Li}_2\left(\frac{7}{-6 - 3i\sqrt{3}}\right) 
 - \operatorname{Li}_2\left(\frac{7}{-6 + 3i\sqrt{3}}\right) - \frac{\pi^2}{12} + \frac{\ln^2(2)}{2}\\ 
 - \left( -\ln\left(\frac{3}{2}\right) + \frac{\ln(7)}{2} - i \cot^{-1}\left(\frac{5}{\sqrt{3}}\right) \right) \left( -\ln\left(\frac{3}{2}\right) + 
 \frac{\ln(7)}{2} + i \cot^{-1}\left(\frac{5}{\sqrt{3}}\right) \right)=0. 
\end{multline*}
 Mimicking the above derivation with the use of Roger's formula, 
 we obtain that 
\begin{multline*}
 \operatorname{Li}_2(z w) - \operatorname{Li}_2(w) - \operatorname{Li}_2(z) + \operatorname{Li}_2\left(\frac{w(1 - z)}{1 - w z}\right) + \operatorname{Li}_2\left(\frac{z(1 - 
 w)}{1 - w z}\right) = \\ 
 -\ln\left(\frac{1 - z}{1 - w z}\right) \ln\left(\frac{1 - w}{1 - w z}\right), 
\end{multline*}
 writing 
\begin{equation*}
 w = 1 - \frac{3}{7}(2 - i\sqrt{3}) \ \ \ \text{and} \ \ \ \quad z = 1 - \frac{3}{7}(2 + i\sqrt{3}). 
\end{equation*}
 We thus have that 
\begin{equation*}
\begin{aligned}
\frac{\pi^2}{4} - \frac{\ln^2(2)}{2} + \frac{(i \pi + \ln(2))^2}{2} 
+ \left( -i \cot^{-1}\left(\frac{2}{\sqrt{3}}\right) + \frac{\ln(7)}{2} \right)\left( i \cot^{-1}\left(\frac{2}{\sqrt{3}}\right) + \frac{\ln(7)}{2} \right)\\
+ \operatorname{Li}_2\left(\frac{4}{7}\right) + \frac{\operatorname{Li}_2(8)}{3} - \operatorname{Li}_2\left(\frac{1}{7} - \frac{3i}{7}\sqrt{3}\right) - \operatorname{Li}_2\left(\frac{1}{7} + \frac{3i}{7}\sqrt{3}\right)=0. 
\end{aligned}
\end{equation*}
 We then apply the known identities 
\begin{align}
 \operatorname{Li}_2\left(\frac{4}{3}\right) + \operatorname{Li}_2(4) & = \frac{1}{2}\left( \pi^2 
 - \ln^2(3) \right), \nonumber \\ 
 -\operatorname{Li}_2\left(\frac{4}{7}\right) & = \operatorname{Li}_2\left(-\frac{4}{3}\right) + 
 \frac{1}{2} \ln^2\left(\frac{3}{7}\right), \nonumber \\ 
 -\frac{{\pi}^2}{18}+\frac{(\ln 3)^2}{6}. & = \operatorname{Li}_2\left(-\frac{1}{3}\right) - 
 \frac{1}{3}\operatorname{Li}_2\left(\frac{1}{9}\right), \text{and} \nonumber \\ 
 \frac{{\pi}^2}{18}+2\ln2\cdot\ln3-2(\ln 2)^2-\frac{2}{3}(\ln 3)^2 
 & = \operatorname{Li}_2\left(\frac{1}{4}\right) + 
 \frac{1}{3}\operatorname{Li}_2\left(\frac{1}{9}\right), \label{previously4p28}
\end{align}
 together with $$ -\operatorname{Li}_2\left(-\frac{4}{3}\right) = \operatorname{Li}_2\left(\frac{1}{7}\right) - \operatorname{Li}_2\left(-\frac{1}{7}\right) - 
 \operatorname{Li}_2\left(\frac{4}{3}\right) - \ln(3)\ln(7) + \ln(4)\ln(7) + \frac{\pi^2}{4}. $$ Converting the arguments $-\frac{1}{3}$ and 
 $\frac{1}{9}$ to obtain the arguments $\frac{1}{4}$ and $\frac{1}{9}$ may be achieved with a classical functional equation for the 
 dilogarithm, and similarly for the arguments $-\frac{1}{7}$ and $\frac{1}{8}$. 

 The associated ${}_{4}F_{3}$-series is as follows: 
\begin{equation*}
\begin{aligned}
 \frac{28 \, }{27} \operatorname{Li}_2\left(\frac{1}{8}\right) - \frac{7 \pi^2}{81} + 
 \frac{14 \ln^2(2)}{3} - \frac{7 \ln^2(7)}{18} + \frac{56}{27} \pi \cot^{-1}\left(\frac{5}{\sqrt{3}}\right) - 
 \frac{14}{9} \left(\cot^{-1}\left(\frac{5}{\sqrt{3}}\right)\right)^2 \\ 
 = 2\sum_{n=0}^{\infty} \left( \frac{27}{196} \right)^{n} 
 \left[ \begin{matrix} \frac{1}{2}, \frac{1}{2}, 1 
 \vspace{1mm} \\ 
 \frac{5}{6}, \frac{7}{6}, \frac{3}{2} \end{matrix} \right]_{n}. 
\end{aligned}
\end{equation*}
 To prove this without the Zaiger identity, we have that 
 the left-hand side may be expressed with 
\begin{equation*}
 \operatorname{Li}_2\left(-\frac{1}{3}\right) + \frac{1}{3} \, \operatorname{Li}_2\left(\frac{1}{28}\right) + \operatorname{Li}_2\left(-\frac{1}{6}\right), 
\end{equation*}
 disregarding logarithmic and other non-dilogarithmic terms for the sake of brevity. Then, using the above formulation of Abel's 
 formula, we make use of 
\begin{equation*}
 \operatorname{Li}_2\left(\frac{1}{28}\right) = \operatorname{Li}_2\left(\frac{1}{7}\right) + 
 \operatorname{Li}_2\left(\frac{1}{4}\right) - \operatorname{Li}_2\left(\frac{1}{9}\right) - 
 \operatorname{Li}_2\left(\frac{2}{9}\right) - \ln\left(\frac{8}{9}\right)\ln\left(\frac{7}{9}\right) 
\end{equation*}
 and 
\begin{equation*}
 \operatorname{Li}_2\left(\frac{2}{9}\right) = \operatorname{Li}_2\left(\frac{1}{3}\right) + 
 \operatorname{Li}_2\left(\frac{2}{3}\right) - \operatorname{Li}_2\left(\frac{4}{7}\right) - 
 \operatorname{Li}_2\left(\frac{1}{7}\right) - \ln\left(\frac{3}{7}\right)\ln\left(\frac{6}{7}\right). 
\end{equation*}
 Informally, since the bases $-\frac{1}{6}$ and $-\frac{1}{3}$ 
 cancel with one another, we are left with a basis given by $\frac{1}{8}$. 

 Using \eqref{previously4p28} and the well-known identity $3 \operatorname{Li}_2\left(\frac{1}{2}\right) - 
 3 \operatorname{Li}_2\left(\frac{1}{4}\right) - \operatorname{Li}_2\left(\frac{1}{8}\right) + 
 \frac{1}{2} \operatorname{Li}_2\left(\frac{1}{64}\right) - \frac{\pi^2}{12} = 0$, it is possible to compress Zagier's four-term identity into a 
 three-term identity. In this direction, we obtain that 
\begin{equation*}
\frac{9}{2} \ln^2(2) - 6 \ln(2) \ln(3) + 2 \ln^2(3) + 
 \frac{1}{2} \operatorname{Li}_2\left(\frac{1}{64}\right) = 
 -\operatorname{Li}_2\left(\frac{1}{9}\right) + \operatorname{Li}_2\left(\frac{1}{8}\right)
\end{equation*}
 Although it is always possible to derive a 
 three-term identity with rational bases using Roger's identity, (e.g. by letting $z=1-m$ and $w=m$), there is no obvious way to prove 
 Zagier's identity with trivial manipulations of Roger's formula and elementary transformations. 

\subsection{Identities related of an evaluation for the inverse tangent integral}
 Setting $u= i\sqrt{3}$ it is possible to derive a relation between $\mathbb{Q}(\sqrt{13})$ 
 and $\mathbb{Q}(\sqrt{3})$ 
 that gives a fast series for $\rm{Cl}_2\big(\tfrac{\pi}3\big)$ that bypasses the summation of the multiple angle formula. 

 For the right-hand side, we obtain 
\begin{equation}\label{equivalentTi}
 \Im \left(\operatorname{Li}_2\left(\frac{i}{\sqrt{3}}\right) 
 - \operatorname{Li}_2\left(\frac{-i}{\sqrt{3}}\right)\right)= \frac{5}{3} \, \Im\operatorname{Li}_2\left(\frac{1 + i\sqrt{3}}{2}\right) - \frac{\pi \ln(3)}{6}, 
\end{equation}
 noting that the left-hand side of \eqref{equivalentTi} may be expressed in an equivalent way with 
 $\Im \text{Ti}_{2}\left( \frac{1}{\sqrt{3}} \right)$, and that the formula in \eqref{equivalentTi} may be proved in an equivalent way using 
 Ramanujan's identity \cite{Ramanujan1915} such that $$ \sum_{n=0}^{\infty} \frac{ \left( \frac{1}{2} \right)_{n} }{n!} \frac{ \cos^{2n+1} x 
 + \sin^{2n+1} x }{(2n+1)^2} = \text{Ti}_{2}(\tan x) + \frac{1}{2} \pi \ln(2 \cos x), $$ as noted in an equivalent way by Campbell 
 \cite{Campbell2022}. From \eqref{equivalentTi} we obtain that 
\begin{equation*}
\begin{aligned}
 \operatorname{Li}_2\left(-\frac{1}{12} \left(3 + i\sqrt{3}\right)\left(5 + \sqrt{13}\right)\right) - \operatorname{Li}_2\left(\frac{1}{12}\left(3 + 
 i\sqrt{3}\right)\left(5 + \sqrt{13}\right)\right) \\ 
 = \operatorname{Li}_2\left(-\frac{2i + \sqrt{3} + i\sqrt{13}}{-2i + \sqrt{3} + i\sqrt{13}}\right) - \operatorname{Li}_2\left(\frac{2i + \sqrt{3} + 
 i\sqrt{13}}{-2i + \sqrt{3} + i\sqrt{13}}\right). 
\end{aligned}
\end{equation*}
 For the left-hand side, the two-term dilogarithm combination given by the $A$-expression in Theorem 
 \ref{maintheorem} is used. 
 Consequently, we obtain the equality of 
\begin{multline*} 
 2\Im \, \operatorname{Li}_2\left(\frac{5 - \sqrt{13}}{\sqrt{12}} e^{\frac{\pi i}{6}}\right) - \Im\operatorname{Li}_2\left(\frac{1 + i\sqrt{3}}{2}\right) + \\ 
 \frac{\pi}{12} \ln\left(\frac{4}{3}\right) + \text{arccosh}\left(\frac{19}{6}\right) \operatorname{arctan}\left(\frac{\sqrt{13} - 2}{\sqrt{27}}\right) 
\end{multline*} 
 and 
\begin{multline*} 
 \Im \text {Li}_2\left(\frac{ \sqrt{13}-5}{\sqrt{12}} e^{\frac{\pi i}{6}}\right) - 
 \Im \operatorname{Li}_2\left(\frac{5 - \sqrt{13}}{\sqrt{12}} e^{\frac{\pi i}{6}} 
 \right) + \frac{5}{6}\Im\operatorname{Li}_2\left(\frac{1 + i\sqrt{3}}{2}\right) \\ 
 + \, \frac{\pi}{6} \ln(2) - \frac{1}{2} \operatorname{arctan}\left(\sqrt{\frac{3}{13}}\right) \ln\left(\frac{1}{6}(19 + 5\sqrt{13})\right) 
 - \frac{\pi}{12} \ln(3), 
\end{multline*}
 and, in turn, the above expression is equal to $$ \frac{1}{6\sqrt{3}}\sum_{n=0}^{\infty} \left( \frac{-1}{324} \right)^{n} \left[ 
 \begin{matrix} \frac{1}{2}, \frac{1}{2}, 1 \vspace{1mm} \\ \frac{5}{6}, \frac{7}{6}, \frac{3}{2} \end{matrix} \right]_{n} =4 
 \sqrt{3} \int_{0}^{1} \frac{(3x^2 - 1)\ln(x)}{48 + x^2(1 - x^2)^2} \, dx. $$ We can then use the duplication formula for $\operatorname{Li}_{2}$, 
 and this has the effect of a series acceleration, because, by taking the imaginary part of $e^{\frac{n\pi i}{3}}$ for positive integers $n$, 
 every third term is ignored. This gives us that 
\begin{multline*}
 \pi \ln\left(\frac{4}{3}\right) + 3\ln\left(\frac{19 + 5\sqrt{13}}{6} \right)\left( \operatorname{arctan}\left(\frac{3\sqrt{3}}{16 + 
 5\sqrt{13}}\right)\right) + \\ 
 3\cdot\Im\left[\operatorname{Li}_2\left(\frac{1}{12}(5 - \sqrt{13})^2 e^{\frac{\pi i}{3}}\right)\right] - \rm{Cl}_2\big(\frac{\pi}3\big) = 
 \frac{2}{\sqrt{3}}\sum_{n=0}^{\infty} \left( \frac{-1}{324} \right)^{n} \left[ \begin{matrix} \frac{1}{2}, \frac{1}{2}, 1 \vspace{1mm} \\ 
 \frac{5}{6}, \frac{7}{6}, \frac{3}{2} \end{matrix} \right]_{n} 
\end{multline*}
 and that 
\begin{equation*}
\begin{aligned}
\Im\left[\operatorname{Li}_2\left(\frac{1}{12}(5 - \sqrt{13})^2 e^{\frac{\pi i}{3}}\right)\right] = 
 \sqrt{3}\sum_{n=1}^{\infty} \frac{(19 - 5 \sqrt{13})^{3n - 2} }{(-216)^n} \left( \frac{5 \sqrt{13} - 
 19}{(1 - 3n)^2} - \frac{6}{(2 - 3n)^2} \right)
\end{aligned}
\end{equation*}

\subsection{A series of convergence rate $\frac{1}{243}$}
 The $u = 
 2$ case of Theorem \ref{maintheorem} yields the ${}_{4}F_{3}$-evaluation such that 
\begin{multline*}
 \sum_{n=0}^{\infty} \left( \frac{1}{243} \right)^{n} 
 \left[ \begin{matrix} \frac{1}{2}, \frac{1}{2}, 1 
 \vspace{1mm} \\ 
 \frac{5}{6}, \frac{7}{6}, \frac{3}{2} \end{matrix} \right]_{n} 
 = \frac{9 \operatorname{Li}_2\left(\frac{1}{4}\right)}{4}-\frac{3 \pi ^2}{4}+\frac{9 \ln^2(2)}{2}-\frac{9 \ln ^2(3)}{8} + \\ 
 \frac{9}{2} \operatorname{arctan}^{2}\left(\frac{1}{\sqrt{2}}\right) + 
 9 \operatorname{arctan}\left(\frac{1}{\sqrt{2}}\right) \operatorname{arctan}\left(\sqrt{2}\right). 
\end{multline*}
 This can be shown to follow from \eqref{MequalsAB}, as the right-hand side of the required dilogarithm identity may be verified 
 using Roger's five-term identity. 

\section{Elliptic ${}_{4}F_{3}$-series}
 In view of how we have applied ${}_{4}F_{3}$-functions of the form 
\begin{equation*}
 {}_{4}F_{3}\!\!\left[ \begin{matrix} 
 1, 1, \frac{1}{2}, \frac{1}{2} \vspace{1mm}\\ 
 b_1, b_2, b_3 \end{matrix} \ \Bigg| \ z \right], 
\end{equation*}
 this leads us to consider similar applications of ${}_{4}F_{3}$-functions of the form 
\begin{equation}\label{alternativehypergeometric}
 {}_{4}F_{3}\!\!\left[ \begin{matrix} 
 1, \frac{1}{2}, \frac{1}{2}, \frac{1}{2} \vspace{1mm}\\ 
 b_1, b_2, b_3 \end{matrix} \ \Bigg| \ z \right]. 
\end{equation}
 However, the partial fraction decomposition-based methods applied above are not applicable with the use of hypergeometric 
 functions of the form indicated in \eqref{alternativehypergeometric}. Informally, this is because a required integral may be 
 elliptic for almost all choices of $z$. Using \eqref{integrandforsum2}, together with 
\begin{equation*}
 \int_{0}^{1} \frac{\arcsin (g \, x(1 - x^2))}{{g x}} dx = \int_{0}^{1} \frac{(-1 + 3x^2)\ln(x)}{\sqrt{1 - g^2 x^2(1 - x^2)^2}} \, dx = 
 \frac{2}{3}\sum_{n=0}^{\infty} \left( \frac{4g^2 }{27} \right)^{n} \left[ \begin{matrix} \frac{1}{2}, \frac{1}{2}, \frac{1}{2} \vspace{1mm} \\ 
 \frac{5}{6}, \frac{7}{6}, \frac{3}{2} \end{matrix} \right]_{n}, 
\end{equation*}
 we then make use of 
\begin{equation*}
 \arcsin\left(\frac{3\sqrt{3}}{2} x(1 - x^2)\right) = 3 \arcsin\left(\frac{\sqrt{3}}{2} x\right), \quad \text{for } \frac{1}{\sqrt{3}} .\geq x 
 \geq 0, 
\end{equation*}
 noting that each side of the above equality is equal to 
\begin{equation*}
 \pi - 3 \arcsin\left(\frac{\sqrt{3}}{2} x\right), \quad \text{for } 
 1 \geq x \geq \frac{1}{\sqrt{3}}
\end{equation*}
 Setting $g = \sqrt{\frac{27}{4}}$ we obtain that 
\begin{equation*}
\int_{0}^{\frac{1}{\sqrt{3}}} \frac{3 \arcsin\left(\frac{\sqrt{3}}{2}x\right)}{x} \, dx + \int_{\frac{1}{\sqrt{3}}}^{1} \frac{\pi - 3 \arcsin\left(\frac{\sqrt{3}}{2}x\right)}{x} \, dx =2\rm{Cl}_2\big(\frac{\pi}3\big), 
\end{equation*}
 where 
\begin{equation}\label{previously5p5}
 \text{Cl}_2\big(\frac{\pi}3\big) = 
 \frac{ \sqrt{3}}{ 2} \, {}_{4}F_{3}\!\!\left[ \begin{matrix} 
 1, \frac{1}{2}, \frac{1}{2}, \frac{1}{2} \vspace{1mm}\\ 
 \frac{3}{2}, \frac{5}{6}, \frac{7}{6} \end{matrix} \ \Bigg| \ 1 \right]. 
\end{equation}
 It is also possible to find a quartic analogue using a similar factoring approach. Using \eqref{integrandforsum2}, we set 
 $a = 3$, $b = 1$, $d = 0$, $c = 0$ and $y = x^2$. We thus have that 
\begin{equation*}
 \sum_{n = 0}^{\infty} \left( \frac{27g^2}{256} \right)^{n} 
 \left[ \begin{matrix} \frac{1}{3}, \frac{1}{2}, \frac{2}{3} 
 \vspace{1mm} \\ 
 \frac{3}{4}, \frac{5}{4}, \frac{3}{2} \end{matrix} \right]_{n} 
 = 2 \int_0^1 \frac{x \left( 4x^2 - 1 \right)}{\sqrt{1 - g^2 \left( 1 - x^2 \right)^3 x^2}} \, dx. 
\end{equation*}
 Setting $g=\sqrt{\frac{256}{27}}$, we find that 
\begin{equation*}
 f(x)= \frac{2x \left( 4x^2 - 1 \right)}{\sqrt{1 - \frac{256}{27} \left(1 - x^2\right)^3 x^2}}
\end{equation*}
 assumes negative values for $ \frac{1}{2} \geq x \geq 0 $ 
 and assumes positive values for $ 1 \geq x \geq 1/2 $, so that 
\begin{align}
\begin{split}
 \int_0^1 
 \frac{\text{sgn}(4x^2 - 1) \, 
 2\sqrt{27} \, x}{\sqrt{16x^4 - 40x^2 + 27}} \, dx 
 & = \frac{3}{4} \sqrt{3} \left( 2 \ln(4 + 3\sqrt{2}) + \ln\left(\frac{7}{2} - 2\sqrt{3}\right) \right) \\ 
 & = {}_{4}F_{3}\!\!\left[ \begin{matrix} 
 1, \frac{1}{2}, \frac{2}{3}, \frac{1}{3} \vspace{1mm}\\ 
 \frac{3}{2}, \frac{5}{4}, \frac{3}{4} \end{matrix} \ \Bigg| \ 1 \right]. 
\end{split}\label{previously5p8}
\end{align}
 Little seems to be known about mixed-base ${}_{4}F_{3}$-series of elliptic type, motivating our closed-form evaluation in both 
 \eqref{previously5p5} and \eqref{previously5p8}. To the best of our knowledge, the ${}_{4}F_{3}(1)$-evaluations are original. 

\section{Hypergeometric series for $\pi^2$}
 In this section, a sextic analogue of our previous hypergeometric series is used to derive a new series {$\pi^{2}$ }. We begin 
 with the relation 
\begin{multline*}
 \int_{0}^{1} \int_{0}^{g} \frac{\arctan\left(g \sqrt{y} (1-y)\right)}{y} \, dg \, dy = \\
 \int_{0}^{1} \left( \frac{\ln\left(g^2 y (1 - y)^2 + 1\right)}{2 (y - 1) y^{3/2}} + 
 \frac{g \operatorname{arctan}\left(g (1 - y) \sqrt{y}\right)}{y} \right) \, dy, 
\end{multline*}
 and we find that the right-hand side may be written as 
 $$ \frac{2g^2}{3} \sum_{n=0}^{\infty} \left( \frac{-4g^2}{27} \right)^{n} 
 \frac{1}{n+1} 
 \left[ \begin{matrix} 1, \frac{1}{2}, \frac{1}{2} 
 \vspace{1mm} \\ 
 \frac{5}{6}, \frac{7}{6}, \frac{3}{2} \end{matrix} \right]_{n}, $$ 
 noting that 
\begin{equation*}
 \int_{0}^{1}\frac{g \, \operatorname{arctan}\left(g (1 - y) \sqrt{y}\right)}{y} \, dy=\frac{4s_1g^2}{3}. 
\end{equation*}
 Applying the substitution $y=x^2$, we find that 
\begin{equation}\label{originally6p3}
 \int_{0}^{1} \ln\left(g^2x^2 (x^2-1)^2 +1 \right) \left(\frac{1}{x^2 - 1} - \frac{1}{x^2}\right) \, dx= 
 \frac{-2g^2}{3} \sum_{n=0}^{\infty} \left( \frac{-4g^2}{27} \right)^{n} 
 \left[ \begin{matrix} \frac{1}{2}, 1, 1 
 \vspace{1mm} \\ 
 \frac{5}{6}, \frac{7}{6}, 2 \end{matrix} \right]_{n}. 
\end{equation}
 Using integration by parts, we rewrite the integral in \eqref{originally6p3} to obtain 
\begin{equation*}
\int_{0}^{1} \frac{2g^2 \left(3x^5 - 4x^3 + x\right)}{1 + g^2x^2(x^2-1)^2} \left(\frac{-1}{x} + 
 \operatorname{arctanh}(x)\right)\, dx, 
\end{equation*}
 and we proceed to use the relation such that 
\begin{equation*}
 2g^2\ \int_{0}^{1} \frac{(x^2 - 1)(3x^2 - 1)}{1 + g^2x^2(1 - x^2)^2} \, dx = 
 \frac{8g^2}{15} \sum_{n=0}^{\infty} \left( \frac{-4g^2}{27} \right)^{n} 
 \left[ \begin{matrix} \frac{1}{2}, 1 
 \vspace{1mm} \\ 
 \frac{7}{6}, \frac{11}{6} \end{matrix} \right]_{n}. 
\end{equation*}
 This may be derived by using a variant involving $\operatorname{arctan}$ of the trigonometric integral used to express $s_{2}$. 
 More specifically, our derivation is given by integrating about $g$ twice and by applying integration by parts twice, once for each 
 iteration, and then subtracting \eqref{displayw2} so as to isolate the ${}_{3}F_{2}$ that is not reducible to a polylogarithm. This 
 approach yields 
\begin{multline*}
 \frac{-2g^2}{3} \sum_{n=0}^{\infty} \left( \frac{-4g^2}{27} \right)^{n} \left[ \begin{matrix} \frac{1}{2}, 1, 1 
 \vspace{1mm} \\ 
 \frac{5}{6}, \frac{7}{6}, 2 \end{matrix} \right]_{n} = \\ 
 \int_{0}^{1} \frac{2g^2 \left(3x^5 - 4x^3 + 
 x\right)}{1 + g^2x^2(x^2-1)^2} \, \text{arctanh}(x)\, dx - 
 \frac{8g^2}{15} \sum_{n = 
 0}^{\infty} \left( \frac{-4g^2}{27} \right)^{n} 
 \left[ \begin{matrix} \frac{1}{2}, 1 
 \vspace{1mm} \\ 
 \frac{7}{6}, \frac{11}{6} \end{matrix} \right]_{n}. 
\end{multline*}
 Combining the above two summations and reversing the application of integration by parts, this yields 
\begin{equation*}
 s_4=\frac{2g^2}{15} 
 {}_{4}F_{3}\!\!\left[ \begin{matrix} 
 1, 1, 1, \frac{3}{2} \vspace{1mm}\\ 
 \frac{7}{6}, \frac{11}{6}, 2 \end{matrix} \ \Bigg| \ -\frac{4g^2}{27} \right] 
 = \int_{0}^{1} \frac{\ln\left(g^2 x^2 (x^2 - 1)^2 + 1\right)}{1 - x^2} \, dx. 
\end{equation*}
 By writing $x^2 (x^2 - 1)^2 + 1/g^2=0$ as $(x^2+r_0)(x^2+r_1)(x^2+r_2)$, 
 where $r_1=a_1+ia_2$ and $r_2=a_1-ia_2$, we obtain that 
\begin{equation*}
 s_4 = -u_1-u_2 + \frac{3\pi^{2}}{4}. 
\end{equation*}
 for 
\begin{equation*}
 \text{ { \footnotesize $ u_1=\frac{1}{4} \left( - \ln^2\left(\frac{2}{-1 + \sqrt{-r_0}}\right) 
 - \ln^2\left(\frac{-2}{1 + \sqrt{-r_0}}\right) + 2 \ln\left(\frac{1 - \sqrt{-r_0}}{2} \right) 
 \ln\left(\frac{1 + \sqrt{-r_0}}{2}\right) \right) $ } } 
\end{equation*}
 and 
\begin{equation*}
 \text{ { \footnotesize $ u_2=\frac{1}{2} \left( - \ln^2\left(\frac{2}{-1 + \sqrt{-r_1}}\right) 
 - \ln^2\left(\frac{-2}{1 + \sqrt{-r_1}}\right) + 2 \ln\left(\frac{1 - \sqrt{-r_1}}{2} \right) 
 \ln\left(\frac{1 + \sqrt{-r_1}}{2}\right) \right). $ } } 
 \end{equation*} 
 Now, setting $g := \frac{i}{u(1-u^2)}$, we obtain that $\sqrt{-r_0} = u$ and that $\sqrt{-r_1}=(-u+\sqrt{4-3u^2})/2$. After 
 simplification, setting $u=ih$, we have (for $h \geq \alpha >0 $ where $\frac{4}{27} \cdot \frac{1}{\alpha^2 (1 + \alpha^2)} = 1$):
\begin{multline*}
 \frac{4}{15h^2(1+h^2)^2} 
 {}_{4}F_{3}\!\!\left[ \begin{matrix} 
 1, 1, 1, \frac{3}{2} \vspace{1mm}\\ 
 \frac{7}{6}, \frac{11}{6}, 2 \end{matrix} \ \Bigg| \ -\frac{4}{27h^2(1+h^2)^2} \right] = \\ 
 \frac{-3}{4} \left(\pi - 2\arctan(h)\right)^2 
 + \ln^2\left(\frac{h^2 + 2 + \sqrt{3h^2 + 4}}{h\sqrt{1 + h^2}}\right). 
\end{multline*}
 Informally, the connection between $\sqrt{3}$ and $\sqrt{13}$ suggested above gives us that 
\begin{equation}\label{previously6p12}
 \ln^2\left(\frac{5 + \sqrt{13}}{2\sqrt{3}}\right) - \frac{\pi^2}{12} = 
 \frac{1}{180} {}_{4}F_{3}\!\!\left[ \begin{matrix} 
 1, 1, 1, \frac{3}{2} \vspace{1mm}\\ 
 \frac{7}{6}, \frac{11}{6}, 2 \end{matrix} \ \Bigg| \ -\frac{1}{324} \right]. 
\end{equation}
 A hypergeometric identity related to the second-to-last displayed equality given by was found by Sun and Zhou in 2024 
 \cite{SunZhoupreprint}, The closed-form evaluation in \eqref{previously6p12} appears to be new. 

\section{A proof of Campbell's conjecture} 
 To solve an open problem recently given by Campbell \cite{Campbell2025}, we begin with the following dilogarithm identity that we 
 have obtained through our beta integral-based method. 

\begin{theorem}\label{generalforCampbell}
 For complex $u$, let $\mathcal{L}(u)$ denote the three-term dilogarithm combination $$ \operatorname{Li}_{2}\left( \frac{ 2-\sqrt{4 - 
 3 u} \sqrt{u}-u }{2} \right) + \operatorname{Li}_{2}\left( \frac{2+\sqrt{4-3 u} \sqrt{u}-u}{2} \right) + 
 \operatorname{Li}_{2}(u) $$ and let $\mathcal{E}(u)$ denote the elementary expression 
\begin{multline*}
 \frac{1}{3} \ln^2\left(\frac{2}{\sqrt{4-3 u} \sqrt{u}-u} + 1\right) -\frac{1}{2} \ln ^2\left(-\frac{2}{-u+\sqrt{4-3 u} 
 \sqrt{u}+2}\right) - \\ 
 \frac{1}{2} \ln ^2\left(\frac{2}{u+\sqrt{4-3 u} \sqrt{u}-2}\right)+\frac{1}{3} \ln
 ^2\left(1-\frac{2}{u+\sqrt{4-3 u} \sqrt{u}}\right) - \\ 
 \frac{2}{3} \ln (1-u) \ln (-u) 
 + \frac{1}{3} \ln ^2(1-u) - \frac{1}{6} \ln ^2(-u) -\frac{\pi ^2}{2}. 
\end{multline*}
 For all $u \in \mathbb{C} \setminus \mathbb{R}_{>0}$, we have that 
 $\mathcal{L}(u) = \mathcal{E}(u)$. For $u \in \mathbb{R}_{>0}$, 
 we have that $\Re\left(\mathcal{L}(u) \right) = \Re\left( \mathcal{E}(u) \right)$
 and that 
 $$ \lim_{b \to 0} \Im\left(\mathcal{L}(u + bi) \right) = \lim_{b \to 0} \Im\left(\mathcal{L}(u + bi) \right). $$
\end{theorem}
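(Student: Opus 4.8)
The plan is to derive the identity $\mathcal{L}(u) = \mathcal{E}(u)$ by the same beta-integral machinery used to prove Theorem \ref{maintheorem}, since the three-term dilogarithm combination $\mathcal{L}(u)$ has arguments of exactly the form $\frac{2 \pm \sqrt{4-3u}\sqrt{u}-u}{2}$ that appear when the denominator of the sextic integrand factors. First I would start from the integral representation underlying \eqref{displayw1} and \eqref{displayw2}, namely an integral of the shape $\int_0^1 \frac{(3x^2-1)\ln x}{1 + g^2 x^2(1-x^2)^2}\,dx$, but now tuned so that the substitution linking $g$ to $u$ yields $\sqrt{u}$ (rather than $u$) in the radicals. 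Concretely, I would set $g := \frac{i}{\sqrt{u}(1-u)}$ or an analogous value so that the cubic $x^2(x^2-1)^2 + 1/g^2$ factors as $(x^2+r_0)(x^2+r_1)(x^2+r_2)$ with $\sqrt{-r_0}$ and $\sqrt{-r_1}$ producing the arguments $u$ and $\frac{2\pm\sqrt{4-3u}\sqrt{u}-u}{2}$; this is precisely the root structure already exhibited in Section \ref{sectionsextic} for $s_4$.

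Next I would carry out the partial fraction decomposition of the reciprocal of the factored product and integrate term by term, exactly as in the proof of Theorem \ref{maintheorem}. Each quadratic factor $x^2 + r_j$ contributes, upon integrating $\frac{(3x^2-1)\ln x}{x^2+r_j}$ over $[0,1]$, a combination of $\operatorname{Li}_2$ evaluated at arguments $\pm 1/\sqrt{-r_j}$ together with products of logarithms; assembling the three contributions and evaluating the indefinite integral at the endpoints $x \to 1$ and $x \to 0$ produces the $\operatorname{Li}_2$ terms making up $\mathcal{L}(u)$ on one side and the logarithmic/$\pi^2$ terms making up $\mathcal{E}(u)$ on the other. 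The Landen and reflection identities \eqref{Landenformula} and \eqref{Eulerreflectionidentity}, plus the duplication formula \eqref{duplicationformula}, would be invoked to consolidate the raw dilogarithm outputs into the symmetric three-term form stated, and to fold the various $\ln^2$ pieces into the coefficients $\tfrac13, \tfrac12, \tfrac16$ and the constant $-\tfrac{\pi^2}{2}$ displayed in $\mathcal{E}(u)$.

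The hard part will be the branch-tracking and the real-argument degeneration. For $u \in \mathbb{C}\setminus\mathbb{R}_{>0}$ the radicals $\sqrt{4-3u}$, $\sqrt{u}$, and the logarithms are single-valued on a domain where the integral identity holds outright, so the identity $\mathcal{L}(u)=\mathcal{E}(u)$ follows by analytic continuation once it is verified on an open subset (for instance by checking it against a convergent hypergeometric evaluation at one test value, as in the $s_3$ computation). For $u \in \mathbb{R}_{>0}$, however, the arguments $\frac{2\pm\sqrt{4-3u}\sqrt{u}-u}{2}$ can straddle a branch cut of $\operatorname{Li}_2$ along $[1,\infty)$, so the imaginary parts acquire discontinuities while the real parts stay continuous; this is exactly why the statement splits into a real-part equality and a matched one-sided imaginary limit. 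I would handle this by taking $u + bi$ with $b \to 0^{+}$ (resp. $b\to 0^{-}$) and using the standard jump relation $\operatorname{Im}\operatorname{Li}_2(x \pm i0) = \pm\pi\ln x$ for $x>1$ to show that the branch discontinuities on the left-hand side are reproduced on the right, leaving the real parts equal and the prescribed imaginary limits equal. Verifying that the jumps cancel consistently across all three dilogarithm terms, and that the log-squared pieces in $\mathcal{E}(u)$ pick up the compensating imaginary contributions, is the delicate bookkeeping I expect to consume most of the proof.
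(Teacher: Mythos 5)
Your proposal reaches for the wrong integral family and, more importantly, omits the mechanism that makes the answer elementary. The paper's proof of this theorem does not use the sextic arctan-type integrals with the rational kernel $\frac{(3x^2-1)\ln x}{1+g^2x^2(1-x^2)^2}$; it uses the \emph{cubic logarithmic} integrals $w_1=\int_0^1\frac{\ln\left(1-\frac{1}{u(1-u)^2}x(1-x)^2\right)}{1-x}\,dx$ and $w_2=\int_0^1\frac{\ln\left(1-\frac{1}{u(1-u)^2}x(1-x)^2\right)}{x}\,dx$, tied to the third-integer ${}_4F_3$ family with lower parameters $\frac43,\frac53,2$. This matters because the three arguments of $\mathcal{L}(u)$ are exactly the roots $\rho_j$ of $t(1-t)^2=u(1-u)^2$, and $\int_0^1\ln\prod_j(1-x/\rho_j)\,\frac{dx}{x}=-\sum_j\operatorname{Li}_2(1/\rho_j)$ delivers the required unit-coefficient symmetric sum directly. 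Your proposed integrand does not: partial fractions of $\frac{3t-1}{\prod_j(t-\rho_j)}$ give coefficients $\frac{1}{\rho_j-1}$, and each piece $\int_0^1\frac{\ln x}{x^2-\rho_j}\,dx$ evaluates to $\rho_j^{-1/2}\,\chi_2\!\left(1/\sqrt{\rho_j}\right)$, i.e.\ an \emph{antisymmetric} (Legendre-chi) combination at the \emph{square roots} $1/\sqrt{\rho_j}$ --- exactly the structure of $K$ in Theorem \ref{maintheorem}. Duplication converts $\chi_2(1/\sqrt{\rho_j})$ into $\operatorname{Li}_2(1/\sqrt{\rho_j})-\frac14\operatorname{Li}_2(1/\rho_j)$, leaving residual dilogarithms at $1/\sqrt{\rho_j}$ that cannot be absorbed into the three-term form $\mathcal{L}(u)$.

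The second and deeper gap: you assert that assembling the endpoint evaluations "produces the $\operatorname{Li}_2$ terms making up $\mathcal{L}(u)$ on one side and the logarithmic/$\pi^2$ terms making up $\mathcal{E}(u)$ on the other," but a single such integral evaluation only expresses the dilogarithm combination in terms of a non-elementary hypergeometric series --- compare Theorem \ref{maintheorem}, where the analogous output is $K=s_3$, a ${}_4F_3$ that is \emph{not} elementary. What collapses the series here is the paper's "shifting" relation $w_2=2w_1$ between the two companion integrals (the kernels $\frac{1}{x}$ and $\frac{1}{1-x}$ against the same logarithm), which eliminates the ${}_4F_3$ and leaves a purely elementary right-hand side after Landen/reflection manipulations. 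Your proposal never invokes this proportionality or any substitute for it, so even with the correct integrand the argument as written would terminate at an identity between $\mathcal{L}(u)$ and a hypergeometric series rather than at $\mathcal{L}(u)=\mathcal{E}(u)$. The branch-tracking discussion in your final paragraph is sensible and consistent with how the paper treats $u\in\mathbb{R}_{>0}$, but it cannot rescue the main computation.
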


\begin{proof}
 Consider the cubic integrals 
\begin{equation}\label{w1w2forCampbell}
w_1 = \int_{0}^{1} \frac{\ln\left(1 - \frac{1}{u(1-u)^2} x(1-x)^2\right)}{1-x} \, dx \text{ and } w_2 = \int_{0}^{1} \frac{\ln\left(1 - \frac{1}{u(1-u)^2} x(1-x)^2\right)}{x} \, dx 
\end{equation}
 along with 
\[ -\frac{z}{6} \sum_{n=0}^{\infty} \left( \frac{4z}{27} \right)^n 
 \left[ \begin{matrix} 
 1, 1, \frac{3}{2} 
 \vspace{1mm} \\ 
 \frac{4}{3}, \frac{5}{3}, 2 \end{matrix} \right]_{n} = 
 \int_0^1 \frac{\ln(1 - z x (1 - x)^2)}{x} \, dx. 
\]
  Using the shifting method found in the first section, we have $\frac{w_2}{w_1}=2$. Eventuating the integrals and  simplification using 
 elementary dilogarithm identities, and we obtain the desired result. 
\end{proof}

\begin{example}
 Setting $u=-1$ 
 we obtain a single-term complex dilogarithm evaluation, with 
\begin{multline*}
 \Re\mathrm{Li}_2\left(-\frac{1}{2} - \frac{i\sqrt{7}}{2}\right) 
 + \frac{\pi^2}{8} 
 + \frac{\ln^2(2)}{4} \\
 + \frac{1}{2} \left(\cot^{-1}\left(\frac{3}{\sqrt{7}}\right)\right)^2 
 - \frac{1}{3} \left(\cot^{-1}\left(\frac{5}{\sqrt{7}}\right)\right)^2 
 - \operatorname{arctan}(\sqrt{7}) \cot^{-1}\left(\frac{3}{\sqrt{7}}\right)=0. 
\end{multline*}
\end{example}

\begin{example}
 Setting $u = \frac{1}{2}$, we obtain that 
\begin{multline*}
 3 \, \mathrm{Li}_2\left(\frac{3}{4} - \frac{\sqrt{5}}{4}\right) 
 + 3 \, \mathrm{Li}_2\left(\frac{3}{4} + \frac{\sqrt{5}}{4}\right) - \\ 
 \frac{3\pi^2}{4} + \frac{3}{2} \ln^2(3 - \sqrt{5}) 
 - \ln^2(\sqrt{5} - 2) - \ln^2(2 + \sqrt{5}) + \frac{3}{2} \ln^2(3 + \sqrt{5})=0, 
\end{multline*}
 and this provides a proof of a two-term dilogarithm identity given by Adegoke and Frontczak \cite{AdegokeFrontczak2024}. 
\end{example}

\begin{example}\label{previously7p6}
 Setting $u = \frac{1}{2} + bi$, and using the reflection formula for the dilogarithm, this leads us to 
\begin{equation}\label{previously7p5}
 \Re \mathrm{Li}_2 \left(\frac{1}{2}+iu \right) = 
 \frac{{\pi}^{2}}{12} - 
 \frac{1}{8} \ln^{2}\left(\frac{1+4u^2}{4} \right) -\frac{({\arctan{(2u)})}^{2}}{2}. 
\end{equation}
 If we then plug \eqref{previously7p5} into Theorem \ref{generalforCampbell}, we obtain a two-term identity for real $b$, where 
 $r=\left(144 b^4 + 136 b^2 + 25\right)$. This gives us a closed form for 
\begin{multline*}
 \mathrm{Li}_2\left(\frac{3}{4} - \frac{2bi}{4} + \frac{r^{\frac{1}{4}}}{4} 
 \left(\cos\left(\frac{1}{2} \arctan\left(\frac{4b}{12b^2 + 5}\right)\right) 
 + i \sin\left(\frac{1}{2} \arctan\left(\frac{4b}{12b^2 + 5}\right)\right)\right)\right) + \\ 
 \mathrm{Li}_2\left(\frac{3}{4} - \frac{2bi}{4} - \frac{r^{\frac{1}{4}}}{4} 
 \left(\cos\left(\frac{1}{2} \arctan\left(\frac{4b}{12b^2 + 5}\right)\right) 
 + i \sin\left(\frac{1}{2} \arctan\left(\frac{4b}{12b^2 + 5}\right)\right)\right)\right). 
\end{multline*}
\end{example}

\begin{example}
 Setting $u = \frac{1+\sqrt{5}}{2}$, we obtain a remarkable single-term identity is derived involving the golden ratio, using Theorem 
 \ref{generalforCampbell}, namely 
\begin{equation*}
\begin{aligned}
 \frac{\pi^2}{100} = \Re \operatorname{Li}_2 \left( \frac{1}{4} \left(3 - \sqrt{5} + i \sqrt{10 - 2\sqrt{5}} \right) \right). 
\end{aligned}
\end{equation*}
 More compactly, we have that $\frac{\pi^2}{100} = 
 \Re \operatorname{Li}_2(r_o)$, where
 $ r_o \approx 0.1909... + 
 0.5877\ldots{i} $ is a root of 
\begin{equation}\label{polynomialCampbell}
 r^4 - 3r^3 + 4r^2 - 2r + 1 = 0.
\end{equation}
 This proves an identity conjectured by Campbell \cite{Campbell2025}, 
 because $1/(2\phi^2)- 1/2\sqrt{-1-1/\phi^2}$ is the same root of the polynomial in \eqref{polynomialCampbell}, 
 letting $\phi = \frac{1 + \sqrt{5}}{2}$ denote the golden ratio. 
\end{example}

\subsection{Related dilogarithm identities}
 In this section, we provide some further dilogarithm identities that we have obtained following a similar approach, relative to the 
 material given above, and we encourage further explorations based on this material. To begin with, we consider some identities that 
 are related to Theorem \ref{generalforCampbell} and that may be derived in a similar way, relative to Theorem \ref{generalforCampbell}. 

\begin{theorem}\label{previously8p1}
 The three-term dilogarithm combination 
$$ - 3\mathrm{Li}_2(u) + 3\mathrm{Li}_2\left(\frac{2}{-u - \sqrt{(4 - 3u) u} + 2}\right) 
+ 3\mathrm{Li}_2\left(\frac{2}{-u + \sqrt{(4 - 3u) u} + 2}\right) $$
 admits the closed form 
\begin{multline*}
 -\ln^2(1 - u) + \frac{1}{2} \ln^2(-u) + 2\ln(-u) \ln(1 - u) + \frac{\pi^2}{2}\\
- \ln^2\left(1 - \frac{2}{u + \sqrt{4 - 3u} \sqrt{u}}\right) 
- \ln^2\left(\frac{2}{\sqrt{4 - 3u} \sqrt{u} - u} + 1\right). 
\end{multline*}
 Equivalently, the three-term dilogarithm combination 
 $$ -3 \operatorname{Li}_2(u) 
 + 3 \operatorname{Li}_2\left(\frac{-u + \sqrt{(4 - 3 u) u} + 2}{2 (u - 1)^2}\right) 
 + 3 \operatorname{Li}_2\left(\frac{-u - \sqrt{(4 - 3 u) u} + 2}{2 (u - 1)^2}\right) $$ 
 admits the closed form 
\begin{multline*}
 - \ln^2(1 - u) + \frac{1}{2} \ln^2(-u) + 2 \ln(-u) \ln(1 - u) + \frac{\pi^2}{2} \\ 
 - \ln^2\left(\frac{2 u^2 - 3 u + \sqrt{u (4-3u)}}{2 (u - 1) u}\right) 
- \ln^2\left(\frac{2 u^2 - 3 u - \sqrt{u (4-3u)}}{2 (u - 1) u}\right). 
\end{multline*}
\end{theorem}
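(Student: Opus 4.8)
The plan is to derive Theorem \ref{previously8p1} by the same beta-integral ``shifting'' machinery that underlies Theorem \ref{generalforCampbell}, exploiting the fact that the two theorems share the identical hypergeometric series and the identical closed-form elementary part up to sign. First I would observe that the argument $u$ enters through the cubic combination $x(1-x)^2$ appearing in the integrals $w_1, w_2$ of \eqref{w1w2forCampbell}, and that the three dilogarithm arguments in the first form of the present statement, namely $u$ and $\tfrac{2}{-u \mp \sqrt{(4-3u)u}+2}$, are precisely the reciprocals (up to the factor giving Landen-type images) of the roots governing the factorization $1 - \tfrac{1}{u(1-u)^2}x(1-x)^2 = 0$. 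Concretely, I would solve this cubic in $x$: one root is rational in $u$ and the other two are the conjugate pair $\tfrac{u \pm \sqrt{(4-3u)u}}{2}$ carrying the surd $\sqrt{4-3u}$, exactly matching the surds in the statement.

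Next I would set up the antiderivative. Expanding $\ln\bigl(1 - \tfrac{1}{u(1-u)^2}x(1-x)^2\bigr)$ as $\sum_k \ln(1 - x/\rho_k)$ over the three roots $\rho_k$ and integrating against $dx/x$ (for $w_2$) and $dx/(1-x)$ (for $w_1$) produces dilogarithms $\operatorname{Li}_2(x/\rho_k)$ together with logarithmic cross terms, evaluated between $x=0$ and $x=1$. The factor of $3$ multiplying each dilogarithm in the statement should emerge from the ratio $w_2/w_1 = 2$ established in Theorem \ref{generalforCampbell}: forming the appropriate linear combination $w_2 - 2w_1$ (or the analogous combination that isolates the hypergeometric series) makes the series side vanish or collapse, leaving a pure dilogarithm identity whose coefficients are scaled by the shifting constant. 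I would then reduce the resulting $\operatorname{Li}_2$ arguments to those displayed using \eqref{Eulerreflectionidentity}, \eqref{Landenformula}, and the elementary transformation $x \mapsto 2/x$ on the surd-bearing roots, which accounts for why the second equivalent form replaces $\tfrac{2}{-u\pm\sqrt{(4-3u)u}+2}$ by $\tfrac{-u\pm\sqrt{(4-3u)u}+2}{2(u-1)^2}$ — this is a single Landen step applied to each of the two conjugate dilogarithms.

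For the equivalence of the two stated forms, I would apply Landen's identity \eqref{Landenformula} to each of the two surd-dependent dilogarithms simultaneously and verify that the accumulated $-\tfrac12\ln^2(1+\cdot)$ corrections recombine into the change from $-\ln^2\bigl(1 - \tfrac{2}{u+\sqrt{4-3u}\sqrt u}\bigr) - \ln^2\bigl(\tfrac{2}{\sqrt{4-3u}\sqrt u - u}+1\bigr)$ in the first closed form to the corresponding two log-squared terms in the second. Since $\operatorname{Li}_2(u)$ is untouched by this step, its coefficient $-3$ is preserved, as is the common elementary block $-\ln^2(1-u) + \tfrac12\ln^2(-u) + 2\ln(-u)\ln(1-u) + \tfrac{\pi^2}{2}$.

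The main obstacle I expect is branch-cut bookkeeping for the logarithms and dilogarithms once $u$ is complex and the surds $\sqrt{4-3u}$, $\sqrt u$ are multivalued: the arguments $\tfrac{2}{-u\mp\sqrt{(4-3u)u}+2}$ can cross $[1,\infty)$, where $\operatorname{Li}_2$ has its cut, so the naive antiderivative evaluation at the endpoints must be corrected by the appropriate $2\pi i \ln(\cdot)$ monodromy terms, and these are exactly what force the $\pm\tfrac12\ln^2$ and $\ln\cdot\ln$ combinations to appear with their precise signs. I would handle this by first establishing the identity on the open region $u \in \mathbb{C}\setminus\mathbb{R}_{>0}$, where a consistent choice of principal branches keeps all arguments off the cuts, and then invoking analytic continuation exactly as in the concluding sentence of the proof of Theorem \ref{generalforCampbell}. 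Verifying the log-squared coefficients against a numerical check at a convenient value such as $u = \tfrac12 + i$ would serve as a guard against sign errors in this reduction.
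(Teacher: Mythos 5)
Your plan is essentially the paper's own proof: the paper disposes of this theorem in one sentence by deferring to the cubic-integral/shifting method of Theorem \ref{generalforCampbell}, and you have correctly reconstructed that method (factoring $1-\tfrac{1}{u(1-u)^2}x(1-x)^2$ over its three roots, integrating $\ln(1-x/\rho_k)$ against $dx/x$ and $dx/(1-x)$, using $w_2/w_1=2$ to eliminate the hypergeometric series, and passing between the two stated forms by Landen's identity, with analytic continuation handling the branch issues). One small correction: dividing the cubic $x^3-2x^2+x-u(1-u)^2$ by $(x-u)$ gives the conjugate roots $\tfrac{2-u\pm\sqrt{u(4-3u)}}{2}$, not $\tfrac{u\pm\sqrt{u(4-3u)}}{2}$ as you wrote; it is these corrected roots whose reciprocals are the dilogarithm arguments $\tfrac{2}{2-u\mp\sqrt{(4-3u)u}}$ in the first form of the statement, so the rest of your argument goes through unchanged.
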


\begin{proof}
 This can be established following a similar approach relative to the proof of Theorem \ref{generalforCampbell}. 
\end{proof}

 This leads us toward the following result. 

\begin{theorem}\label{previouslymaster}
 For $ |u|<1$, we have that 
\begin{multline*}
 - \operatorname{Li_2} \left( \frac{2 u^2 - 3 u + \sqrt{ u (4-3u)}}{2 (u-1)^2} \right) 
 - \operatorname{Li_2} \left( \frac{2 u^2 - 3 u - \sqrt{ u (4-3u)}}{2 (u-1)^2} \right) \\ 
 = \frac{3}{2} \ln^2 \left( 1 - \frac{u - \sqrt{(4 - 3 u) u}}{2 (u-1)} \right) 
 - \operatorname{Li_2} \left( \frac{u}{u - 1} \right). 
\end{multline*}
\end{theorem}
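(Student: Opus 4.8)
The plan is to derive Theorem \ref{previouslymaster} directly from the second formulation of Theorem \ref{previously8p1}, using Euler's reflection identity \eqref{Eulerreflectionidentity} and Landen's identity \eqref{Landenformula} to transport the arguments into the required positions. Throughout, I write $s = \sqrt{u(4-3u)}$, abbreviate the two dilogarithm arguments of the second version of Theorem \ref{previously8p1} as $a_{\pm} = \frac{2 - u \pm s}{2(u-1)^2}$, and denote the two arguments appearing on the left of Theorem \ref{previouslymaster} by $b_{\pm} = \frac{2u^2 - 3u \pm s}{2(u-1)^2}$.

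First I would record the reflection relation between these arguments. A short computation gives $1 - a_{+} = b_{-}$ and $1 - a_{-} = b_{+}$, so the two dilogarithms on the left of Theorem \ref{previouslymaster} are precisely the reflections of those in Theorem \ref{previously8p1}. Applying \eqref{Eulerreflectionidentity} to each term $\operatorname{Li}_2(b_{\pm}) = \operatorname{Li}_2(1 - a_{\mp})$, the left-hand side $-\operatorname{Li}_2(b_+) - \operatorname{Li}_2(b_-)$ becomes $\operatorname{Li}_2(a_+) + \operatorname{Li}_2(a_-)$ together with the contribution $-\frac{\pi^2}{3}$ and the logarithmic cross terms $\ln(a_+)\ln(1 - a_+) + \ln(a_-)\ln(1 - a_-)$.

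Next I would substitute the closed form of Theorem \ref{previously8p1}, which expresses $3\operatorname{Li}_2(a_+) + 3\operatorname{Li}_2(a_-)$ as $3\operatorname{Li}_2(u)$ plus an elementary expression; dividing by $3$ replaces $\operatorname{Li}_2(a_+) + \operatorname{Li}_2(a_-)$ by $\operatorname{Li}_2(u)$ plus one-third of that expression. I would then apply \eqref{Landenformula} with $x = -u$, which gives $\operatorname{Li}_2(u) = -\operatorname{Li}_2\!\left(\frac{u}{u-1}\right) - \frac{1}{2}\ln^2(1-u)$ and so produces exactly the target term $-\operatorname{Li}_2\!\left(\frac{u}{u-1}\right)$ on the right. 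After these two substitutions the dilogarithmic parts of the two sides of Theorem \ref{previouslymaster} agree, and what remains is an identity among elementary functions.

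The final — and hardest — step is to show that the accumulated logarithmic and $\pi^2$ terms collapse to the single squared logarithm $\frac{3}{2}\ln^2\!\left(1 - \frac{u-s}{2(u-1)}\right)$. Here I would use $\ln(1 - a_+) = \ln(b_-)$ and $\ln(1 - a_-) = \ln(b_+)$ from the reflection relation, together with the factorizations $c_{\pm} = \frac{u-1}{u}\, b_{\pm}$ relating the log arguments $c_{\pm} = \frac{2u^2 - 3u \pm s}{2(u-1)u}$ of Theorem \ref{previously8p1} to $b_{\pm}$, so that every squared and cross logarithm can be re-expressed through $\ln(b_+)$, $\ln(b_-)$, $\ln(-u)$, and $\ln(1-u)$. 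The main obstacle is the careful tracking of branches for $|u| < 1$, where $a_{\pm}$ and $b_{\pm}$ are genuinely complex: the imaginary parts of these logarithms must furnish the cancellation of the residual $\pi^2$-terms, and the combination must reorganize — via the product and quotient rules for $\ln$ — into the single perfect square on the right. I expect this logarithmic bookkeeping, rather than any conceptual difficulty, to be the crux of the proof.
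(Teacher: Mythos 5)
Your reduction is sound and genuinely different from the paper's. The algebraic skeleton you set up is correct: one does have $1-a_{\pm}=b_{\mp}$ and $c_{\pm}=\frac{u-1}{u}\,b_{\pm}$, the Landen step $\operatorname{Li}_2(u)=-\operatorname{Li}_2\!\left(\frac{u}{u-1}\right)-\frac12\ln^2(1-u)$ is the right one, and the residual elementary identity you are left with after substituting the closed form of Theorem \ref{previously8p1} does in fact hold (it checks numerically, e.g.\ at $u=-\tfrac12$). The paper, however, does not route through Theorem \ref{previously8p1} at all: it applies Rogers' five-term identity directly with $w=b_+$ and $z=b_-$, exploiting that $wz=\frac{u}{u-1}$ and that the two remaining five-term arguments $\frac{u\pm\sqrt{u(4-3u)}}{2(u-1)}$ have sum equal to their product, so that Landen's identity collapses their two dilogarithms into the single term $-\frac12\ln^2\!\left(1-\frac{u+\sqrt{u(4-3u)}}{2(u-1)}\right)$; the only elementary bookkeeping left is the one product of logarithms coming from the right side of Rogers' identity, which vanishes because the relevant arguments $a,b$ satisfy $ab=1$. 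That comparison shows the trade-off: your approach reuses an already-stated theorem (itself only sketched in the paper) but defers a heavier multi-term logarithmic verification, with branch tracking for complex $a_\pm,b_\pm$, to the very end, whereas the paper's choice of $w,z$ makes the elementary part a one-line consequence of $ab=1$ and produces the coefficient $\frac32=\frac12+1$ transparently. Your proposal would be complete once that final logarithmic identity is actually carried out; as written it is a correct plan with the crux deferred, at roughly the same level of rigor as the paper's own argument.
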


\begin{proof}
 By Roger's five-term identity, we have that $$ w = \frac{2u^2 - 3u + \sqrt{(4 - 3u)u}}{2(u - 1)^2} \ \ \ \text{and} \ \ \ 
 z = \frac{2u^2 - 3u - \sqrt{(4 - 3u)u}}{2(u - 1)^2}. $$ By then applying the Landen identity, we obtain that 
 $$ \operatorname{Li}_2 \left(\frac{u + \sqrt{(4 - 3u) u}}{2 (u - 1)}\right) + \operatorname{Li}_2\left(\frac{u - 
 \sqrt{(4 - 3u) u}}{2 (u - 1)}\right) = -\frac{1}{2} \ln^2\left(1 - \frac{u + \sqrt{(4 - 3u) u}}{2 (u - 1)}\right). $$ 
 Similarly, for $ |u|<1$, we obtain the vanishing of 
\begin{multline*}
 \ln\left(\frac{-u + \sqrt{u (4-3u)} + 2}{2 - 2 u}\right) \ln\left(\frac{u + \sqrt{u (4-3u)} - 2}{2 (u - 1)}\right) + \\ 
 \ln^2\left(1 - \frac{u + \sqrt{(4 - 3 u) u}}{2 (u - 1)}\right) = 0. 
\end{multline*}
 This may be obtained by setting $$ a = \frac{2 - u + \sqrt{u(4 - 3u)}}{2 - 2u} \ \ \ \text{and} \ \ \ b = \frac{-2 + u + \sqrt{u(4 - 
 3u)}}{-2 + 2u}, $$ since the above equalities yield $(\ln(a)+\ln(b))^2-\ln^2(a)-\ln(a)\ln(b)=\ln^2(b) + 
 \ln(a)\ln(b)$. Because $ab=1$, we have that $(\ln(a)+\ln(b))^2=0$, so we have that $0= \ln^2(b)+\ln(a)\ln(b) + 
 \ln^2(a)+\ln(a)\ln(b)=(\ln(a)+\ln(b))^2=0$. 
\end{proof}

\begin{example}
 Setting $u=-1$ in Theorem \ref{previouslymaster}, this yields 
\begin{equation*}
\frac{\pi^2}{24} - \frac{ \ln^2(2)}{4} + \frac{1}{3} \left(\operatorname{arctan} 
 \left(\frac{\sqrt{7}}{5}\right) \right)^2 = \Re\mathrm{Li}_2 \left(\frac{3 - i \sqrt{7}}{8} \right), 
\end{equation*}
 and this evaluation appears to be original. 
\end{example}

\begin{example}
 Setting $u = \frac{1}{2} + \frac{3i\sqrt{7}}{14}$ in Theorem \ref{previouslymaster}, with $b= \frac{3\sqrt{7}}{14}$, we find that 
\begin{multline*}
 \frac{3\pi^2}{8} - \frac{\ln^2(8)}{2} + \frac{3}{8} \ln\left(\frac{49}{8}\right) \ln(8) 
 - \pi \operatorname{arctan} \left(\frac{3}{\sqrt{7}}\right) 
 + \operatorname{arctan} \left(\frac{3}{\sqrt{7}}\right)^2 + \\ 
 \operatorname{arctan} \left(\frac{\sqrt{7}}{11}\right)^2 
 + \operatorname{arctan} (\sqrt{7})^2 
 = \frac{3 }{4} \mathrm{Li}_2\left(\frac{1}{8}\right)+ 3 \, \Re \, \mathrm{Li}_2 \left(\frac{21 + i \sqrt{7}}{32} \right). 
\end{multline*}
\end{example}

\begin{example}
 Setting $u=-7$ in Theorem \ref{previouslymaster}, this yields 
\begin{equation*}
\begin{aligned}
\frac{3}{4} \operatorname{Li}_2\left(\frac{1}{8}\right)= - \frac{1}{2} \left(\operatorname{arctan} \left(\frac{5}{17 \sqrt{7}}\right)\right)^2 + \frac{3}{2}\Re\operatorname{Li}_2\left(\frac{9 - 5 i \sqrt{7}}{128}\right). 
\end{aligned}
\end{equation*}
\end{example}

\begin{example}
 Using Theorem \ref{previouslymaster}, we have also discovered the ladder relation such that 
\begin{equation}\label{previously8p8}
 \operatorname{Li}_2(r^{-6}) - 2 \operatorname{Li}_2(r^{-3}) 
 + 2 \operatorname{Li}_2\left(r^{-2}\right) = \arctan^2\left(\frac{\sqrt{7}}{11}\right) 
 - \frac{\ln^2(2)}{4}, 
\end{equation}
 writing $r=\left(\frac{1 + i \sqrt{7}}{2}\right)$. This can be obtained with the use of the relations such that 
 $$ \left( \frac{1 + i \sqrt{7}}{2} \right)^3 = -\frac{5}{2} + \frac{i \sqrt{7}}{2}, $$
 and such that 
 $$ \operatorname{Li}_2\left( \left( -\frac{5}{2} + \frac{i \sqrt{7}}{2} \right)^{-1} \right)
+ \operatorname{Li}_2\left( \left( \frac{5}{2} + \frac{i \sqrt{7}}{2} \right)^{-1} \right)
- \frac{1}{2} \operatorname{Li}_2\left( \left( \frac{1}{2}(1 + i \sqrt{7}) \right)^{-6} \right) = 0, $$
 and such that 
 $$ \operatorname{Li}_2\left( \left( \frac{5}{2} + \frac{i \sqrt{7}}{2} \right)^{-1} \right)
 = -\frac{1}{2} \ln^2\left( 1 - \left( \frac{1 + i \sqrt{7}}{2} \right)^{-2} \right) 
 - \operatorname{Li}_2\left( \left( \frac{1 + i \sqrt{7}}{2} \right)^{-2} \right). $$
\end{example}

\begin{example}
 Setting $u = \frac{3-\sqrt{5}}{2}$ in Theorem \ref{previouslymaster}, this can be used to obtain an evaluation for the two-term 
 dilogarithm combination $$ - 3 \mathrm{Li}_2\left(\frac{2 + \sqrt{5} - \sqrt{3 + 2 \sqrt{5}}}{2} \right) - 3 \mathrm{Li}_2\left(\frac{2 + 
 \sqrt{5} + \sqrt{3 + 2 \sqrt{5}}}{2} \right) $$ previously derived by Bytsko \cite{Bytsko1999}. 
\end{example}

\section{Higher-order fields}
 The algebraic arguments of the dilogarithmic expressions involved in the preceding sections were rational or in quadratic or 
 quartic algebraic number fields. This leads us to turn our attention toward higher-order algebraic number fields. 

\begin{example}
 Setting $u = r + i\sqrt{1-r^{2}}$, this leads us to apply the circle property above and \eqref{previously7p5}. Accordingly, we have 
 to solve 
\begin{equation*}
\begin{aligned}
 \frac{1}{2} = \Re \left( \frac{1}{2} \left( -i \sqrt{1 - r^2} - 
 \sqrt{(r + i \sqrt{1 - r^2}) (-3 i \sqrt{1 - r^2} - 3r + 4)} - r + 2 \right) \right). 
\end{aligned}
\end{equation*}
 The solution is given by $8 r^3 - 8 r^2 - 2 r + 3 =0$. We want the negative real root, $r=-0.57395...$, which satisfies the domain of 
 the circle property. Hence, two of the three dilogarithms of Theorem \ref{generalforCampbell} have a closed form, thus yielding a 
 single-term identity as desired. The resulting single-term identity can be expressed as the root of a solvable sextic. This have be 
 shown to produce 
 \begin{equation*}
\begin{aligned}
 -\frac{3}{4} \arctan^{2} \left( \frac{\sqrt{1 - z^2}}{z} \right) + \arctan^{2} \left( \frac{\sqrt{1 - z^2}}{z + 1} \right) 
 - \frac{1}{4} \ln^2(2(z + 1))\\ 
 + \arctan^2 \left( \frac{1}{\sqrt{8z + 3}} \right) + \frac{\pi^2}{4} - \ln^2(1 - x) = 3\Re\operatorname{Li}_2(x), 
 \end{aligned}
\end{equation*}
 where $x^6 - 2 x^5 + 13 x^4 - 24 x^3 + 19 x^2 - 7 x + 1=0$ for $x\approx 0.4495...-0.1211...i$ and $z=|r|$. 
\end{example}

\subsection{A two-term ladder identity for the Plastic Ratio/Constant}
 We will derive a two-term cubic dilogarithm ladder identity where both terms are real. These are uncommon, compared to three-term and above series or series with 2 or more complex terms. Let:
\begin{equation*}
\begin{aligned}
- \operatorname{Li}_2((1-z)^2) + \frac{\pi^2}{6} - 2 \ln(1-z) \left( \ln(2-z) + \ln z \right) = \operatorname{Li}_2(2z - z^2)\\
\operatorname{Li}_2(2-z) + \operatorname{Li}_2(z) = \frac{1}{2} \operatorname{Li}_2(2z - z^2) + \frac{\pi^2}{4}\\
\operatorname{Li}_2(z) = \operatorname{Li}_2\left(\frac{z-1}{z}\right) + \frac{1}{2} \ln^2 \left( \frac{z}{1-z} \right) - \frac{1}{2} \ln^2(1-z) + \frac{\pi^2}{6}
\end{aligned}
\end{equation*}
 Aplying elementary transformations twice, solving for $u,z$, we have that 
\begin{equation*}
\begin{aligned}
\left\{
\begin{array}{l}
z=1-\frac{1}{\frac{1}{2} \left(-u + \sqrt{(4 - 3u) u} + 2 \right)} , \\
2 - z = \frac{1}{1 - \frac{1}{2} \left(-u - \sqrt{(4 - 3u) u} + 2 \right)}
\end{array}
\right.
\end{aligned}
\end{equation*}
 We have, where $z$ is the real root of $z^3+z^2-1=0$ and $u=z$, that 
\begin{equation*}
3 \operatorname{Li}_2(z^2) + 6 \operatorname{Li}_2(z) = - \ln^2(z - z^2) - 6 \ln(z) \ln(z + 1) + \pi^2
\end{equation*}
 Note that $z=1/p$ where $p$ is the so-called \textit{Plastic Constant} or ratio, given as the real root of $p^3-p-1=0$. It also obeys 
 the following identities: $p-1=p^{-4}, p^{-5}=1-1/p,p^{3}=p+1,p^{2}=1+1/p$. Thus we have the following relations: 
\begin{equation*}
\begin{aligned}
\mathrm{Li_2}(p^{-2}) + 2\mathrm{Li_2}(p^{-1}) &= -8\ln^2(p) + \frac{\pi^2}{3} \\
\mathrm{Li_2}(p^{-1}) + \mathrm{Li_2}(p^{-5}) &= \frac{\pi^2}{6} - 5\ln^2(p) \\ 
\mathrm{Li_2}(p^{-2}) - 2\mathrm{Li_2}(p^{-5}) &= 2\ln^2(p)\\
\operatorname{Li}_2(p^2) + \operatorname{Li}_2(p^3) = -\frac{1}{2} \ln^2(p) + \frac{\pi^2}{2}\\
\operatorname{Li}_2(-p^{-4}) - \operatorname{Li}_2(p^{-1}) = -\frac{\pi^2}{6} + \frac{9}{2} \ln^2(p)
\end{aligned}
\end{equation*}
 This also leads to various ladder expressions: 
\begin{equation*}
\begin{aligned}
\frac{5}{2} \mathrm{Li}_2(p^{-2}) + \mathrm{Li}_2(p^{-1}) - 4\mathrm{Li}_2(p^{-5}) = \frac{\pi^2}{6}\\
\mathrm{Li}_2\left(\frac{1}{p^{14}}\right) - 4\mathrm{Li}_2\left(\frac{1}{p^7}\right) + 6\mathrm{Li}_2\left(\frac{1}{p^5}\right) = \frac{\pi^2}{6}-8\ln^2(p)
\end{aligned}
\end{equation*}

\begin{example}
 Setting $u=1/2+i\mu$, it is possible to derive additional single-term dilogarithms by combining the first identity in Theorem 
 \ref{previously8p1} with \eqref{previously7p5}. Define $\mu$ as an algebraic number given by the expression below. This is derived 
 by setting solving for real $\mu$ for $u=1/2+i \mu$ , which has four possible solutions: 
\begin{equation*}
 \frac{1}{2} = \Re\left(\frac{4}{\pm\sqrt{12 \mu^2 + 4 i \mu + 5} - 2 i \mu + 3}\right) 
\end{equation*}
 The solutions are the real roots of $-256 c^8 + 96 c^4 + 544 c^2 - 125=0$, or: 
\begin{equation*}
\mu_1\approx\pm1.1441...\mu_2 \approx \pm 0.4714... 
\end{equation*}
 Plugging $u=1/2+\mu_2i$ into the first identity in Theorem \ref{previously8p1} thus produces a single-term dilogarithm identity, and 
 the same holds for $u=1/2+\mu_1i$. Moreover, we have that $\mu_1,\mu_2$ admit closed forms, with $$ \mu_1^2=\frac{1}{32} 
 \sqrt{\frac{k_1+192}{3}} + \frac{1}{2} \sqrt{-\frac{k_1}{768}+\frac{1}{2} + 68 \sqrt{\frac{3}{k_1+192}}} $$ and $$ \mu_2^2 = 
 \frac{1}{32} \sqrt{\frac{k_1+192}{3}} - \frac{1}{2} \sqrt{-\frac{k_1}{768}+\frac{1}{2} + 68 \sqrt{\frac{3}{k_1+192}}}, $$ writing $$ 
 k_1=128 \cdot 2^{2/3} \cdot 3^{1/3} \left( (45 - \sqrt{1929})^{1/3} + (45 + \sqrt{1929})^{1/3} \right). $$ Using the two-term identity in 
 Example \ref{previously7p6}, these are obtained by solving a system of equations, where $\theta=1$ corresponds to $\mu_1$ and $ 
 \theta=-1$ corresponds to $\mu_2$. 
\begin{equation*}
\begin{aligned}
 \frac{3}{4} + \theta \frac{\sqrt{12 \mu^2 + 5 + \sqrt{144 \mu^4 + 136 \mu^2 + 25}}}{4 \sqrt{2}} = \frac{2}{4k^2+1} \\ 
 - \frac{\mu}{2}+\theta \frac{\sqrt{-12 \mu^2 -5 + \sqrt{144 \mu^4 + 136 \mu^2 + 25}}}{4 \sqrt{2}} = \frac{-4k}{4k^2+1} 
\end{aligned}
\end{equation*}
 This allows us to produce the explicit identity such that 
\begin{multline*}
 -3 \Re\operatorname{Li}_2 \left( \frac{24 k^2 - 
 2 + 4 i (4 k^2 \mu - 4 k + \mu)}{4 k^2 (4 \mu^2 + 9) - 32 k \mu + 4 \mu^2 + 1} \right) = 
 -\frac{1}{8} \ln^2(k^2 + \frac{1}{4}) + \\ 
 \ln^2 \left( \frac{4 k \mu + 6 i k - 2 i \mu - 1}{4 k \mu + 2 i k - 2 i \mu - 3} \right) 
- \frac{5}{2} \arctan^{2}(2 k) 
- \arctan^{2}(2 \mu) 
+ \pi \arctan(2 \mu). 
\end{multline*}
\end{example}

\section{Applications of Roger's identity}
 Our below applications of Roger's identity are useful in the construction of new dilogarithm ladder relations, as we later demonstrate. 

\begin{lemma}\label{previously10p1}
 The equality of 
\begin{multline*}
 3 \operatorname{Li}_2\left(\frac{-2u^2 + 3u + \sqrt{-u (3u - 4)}}{4u - 2u^2}\right) + 
 3 \operatorname{Li}_2\left(\frac{-2u^2 + 3u - \sqrt{-u (3u - 4)}}{4u - 2u^2}\right) + \\ 
 3 \operatorname{Li}_2(u) + 3 \operatorname{Li}_2((u - 1)^2) 
\end{multline*}
 and 
\begin{multline*}
 \text{ {\footnotesize $ - \frac{3}{2} \ln^2\left(\frac{2}{u - \sqrt{(4 - 3u) u} - 2}\right) 
 - \frac{3}{2} \ln^2\left(\frac{2}{u + \sqrt{(4 - 3u) u} - 2}\right) 
 + \ln^2\left(1 - \frac{2}{u + \sqrt{(4 - 3u) u}}\right) $ } } \\ 
 \text{ { \footnotesize $ + \ln^2\left(1 + \frac{2}{\sqrt{(4 - 3u) u} - u}\right) 
 - 3 \ln\left(\frac{u - \sqrt{-u (3u - 4)}}{4u - 2u^2}\right) 
 \ln\left(\frac{u + \sqrt{-u (3u - 4)}}{4u - 2u^2}\right) + $ } } \\ 
 \text{ { \footnotesize $ \ln^2(1 - u) - \frac{\ln^2(-u)}{2} - 2 \ln(-u) \ln(1 - u) - \frac{3 \pi^2 }{2}$. } } 
 \end{multline*}
 holds for complex $u$. 
\end{lemma}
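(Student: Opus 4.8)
The plan is to obtain the identity from a \emph{single} application of Roger's five-term identity, in the $\operatorname{Li}_2$ form displayed in the Zagier section, followed by substitution of the closed form furnished by Theorem~\ref{previously8p1}. Writing $S := \sqrt{(4-3u)u} = \sqrt{-u(3u-4)}$ for brevity, I would invoke Roger's identity with the symmetric pair
$$z = \frac{2-u+S}{2}, \qquad w = \frac{2-u-S}{2},$$
namely the two roots of $t^2-(2-u)t+(u-1)^2$, so that $z+w=2-u$ and $zw=(u-1)^2$. The first step is to verify the three argument identities that make this choice decisive: from $zw=(u-1)^2$ one has $1-zw=u(2-u)$, while a short expansion gives $\tfrac{z(1-w)}{1-zw}=\tfrac{-2u^2+3u+S}{2u(2-u)}$ and $\tfrac{w(1-z)}{1-zw}=\tfrac{-2u^2+3u-S}{2u(2-u)}$, which are exactly the two algebraic arguments of the Lemma (recall $4u-2u^2=2u(2-u)$). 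Moreover $1-z=\tfrac{u-S}{2}$ and $1-w=\tfrac{u+S}{2}$, so the product term $-\ln\!\big(\tfrac{1-z}{1-zw}\big)\ln\!\big(\tfrac{1-w}{1-zw}\big)$ coming out of Roger's identity is precisely the cross term $-\ln\!\big(\tfrac{u-S}{4u-2u^2}\big)\ln\!\big(\tfrac{u+S}{4u-2u^2}\big)$ of the claim. Thus Roger's identity delivers
$$\operatorname{Li}_2\big((u-1)^2\big)+\operatorname{Li}_2(\alpha_+)+\operatorname{Li}_2(\alpha_-)=\operatorname{Li}_2(z)+\operatorname{Li}_2(w)-\ln\!\Big(\tfrac{u-S}{4u-2u^2}\Big)\ln\!\Big(\tfrac{u+S}{4u-2u^2}\Big),$$
where $\alpha_\pm$ abbreviate the two Lemma arguments.

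The key reduction is that $z$ and $w$ are the reciprocals of the two dilogarithm arguments $\beta_\pm := \tfrac{2}{-u\pm S+2}$ occurring in the first form of Theorem~\ref{previously8p1} (indeed $z\beta_+=w\beta_-=1$). I would therefore apply the classical inversion relation in the form $\operatorname{Li}_2(1/\beta)=-\operatorname{Li}_2(\beta)-\tfrac{\pi^2}{6}-\tfrac12\ln^2(-\beta)$ to each, yielding $\operatorname{Li}_2(z)+\operatorname{Li}_2(w)=-\big(\operatorname{Li}_2(\beta_+)+\operatorname{Li}_2(\beta_-)\big)-\tfrac{\pi^2}{3}-\tfrac12\ln^2(-\beta_+)-\tfrac12\ln^2(-\beta_-)$. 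Substituting the value $\operatorname{Li}_2(\beta_+)+\operatorname{Li}_2(\beta_-)=\operatorname{Li}_2(u)+\tfrac13 E(u)$ supplied by Theorem~\ref{previously8p1}, where $E(u)$ is the elementary right-hand side of that theorem, and multiplying the whole relation by $3$, the stray $3\operatorname{Li}_2(u)$ is exactly what completes the four-term combination on the left of the Lemma, and the right-hand side becomes entirely elementary.

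The final step is bookkeeping. Since $-\beta_+=\tfrac{2}{u-S-2}$ and $-\beta_-=\tfrac{2}{u+S-2}$, the two inversion terms reproduce the leading $-\tfrac32\ln^2\!\big(\tfrac{2}{u-S-2}\big)-\tfrac32\ln^2\!\big(\tfrac{2}{u+S-2}\big)$ of the claim; the $\pi^2$ constants combine as $-\tfrac{\pi^2}{2}$ (from $-E(u)$) together with $-\pi^2$ (from the three-fold inversion) to give $-\tfrac{3\pi^2}{2}$; and the remaining $\ln^2(1-u)$, $\ln^2(-u)$, $\ln(-u)\ln(1-u)$, and the two $\ln^2\!\big(1-\tfrac{2}{u+S}\big)$, $\ln^2\!\big(1+\tfrac{2}{S-u}\big)$ terms appear with exactly the stated signs, since negating $E(u)$ flips each of them. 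I expect the genuine obstacle to be branch-cut control rather than algebra: both Roger's identity and the inversion relation hold only on the principal sheet away from the cut $[1,\infty)$, and in particular the inversion \emph{must} be applied toward $\beta_\pm$ rather than toward $z,w$, since the two choices differ by $2\pi i$ shifts inside the logarithmic squares. The clean computation above is therefore valid on the region of $u$ for which all eight arguments avoid the cut, and one then extends to all complex $u$ by analytic continuation, checking that the accumulated $2\pi i$ corrections are consistent across the half-planes determined by the signs of $\Im(\cdot)$ of the radicands; verifying this consistency is the part that requires real care.
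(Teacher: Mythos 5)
Your proposal is correct and matches the paper's proof in its essential step: the paper likewise applies Roger's five-term identity with the pair $w = \tfrac{2-u-\sqrt{(4-3u)u}}{2}$, $z = \tfrac{2-u+\sqrt{(4-3u)u}}{2}$. The only difference is that the paper substitutes Theorem~\ref{generalforCampbell} directly for $\operatorname{Li}_2(z)+\operatorname{Li}_2(w)$, whereas you reach the same quantity via Theorem~\ref{previously8p1} together with the inversion formula --- an equivalent detour, since the dilogarithm arguments in Theorem~\ref{previously8p1} are exactly the reciprocals $1/z$ and $1/w$.
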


\begin{proof}
 By setting $$w = \frac{-u - \sqrt{(4 - 3u)u} + 2}{2} \ \ \ \text{and} \ \ \ z = \frac{-u + \sqrt{(4 - 3u)u} + 2}{2}, $$ the desired result 
 follows from the application of Roger's identity to Theorem \ref{generalforCampbell}. 
\end{proof}

\begin{lemma}\label{previously10p2}
 The equality of 
\begin{multline*}
 \operatorname{Li}_2 \left( \frac{2 u^2 - 3 u - \sqrt{u(4-3u)}}{2 (u-1)^2} \right) 
 + \operatorname{Li}_2 \left( \frac{(u - 1) \left(2 u^3 - 4 u^2 + 3 u - \sqrt{u(4 - 
 3 u)} \right)}{2 \left(u^4 - 3 u^3 + 4 u^2 - 2 u + 1\right)} \right) \\ 
 + \operatorname{Li}_2 \left( \frac{u \left( u^2 - u + (u - 1) \sqrt{u(4-3u)} + 
 2 \right)}{2 \left(u^4 - 3 u^3 + 4 u^2 - 2 u + 1\right)} \right) 
 + \operatorname{Li}_2 \left( \frac{u \left(2 u^2 - 3 u + \sqrt{u(4-3u)}\right)}{2 (u-1)^2} \right) 
\end{multline*}
 and the closed form 
\begin{multline*}
 \text{ { \footnotesize $ - \ln \left( \frac{u^2 - u + (u - 
 1) \sqrt{u(4 - 3 u)} + 2}{2 \left(u^4 - 3 u^3 + 4 u^2 - 2 u + 1\right)} \right) 
 \ln \left( \frac{(u - 1) \left(2 u^3 - 5 u^2 - u \sqrt{(4 - 3 u) u} + 4 u - 2\right)}{2 \left(u^4 - 3 u^3 + 
 4 u^2 - 2 u + 1\right)} \right) - $ } } \\ 
 \text{ { \footnotesize $ \frac{3}{2} \ln^2 \left( 1 - \frac{u - \sqrt{u(4-3u)}}{2(u-1)} \right) - 
 \frac{1}{2} \ln^2 (1 - u)$. } }
\end{multline*}
 holds for complex $u$. 
\end{lemma}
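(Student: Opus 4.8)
The plan is to apply Roger's five-term identity, in exactly the form displayed in the Zagier section, to a single well-chosen pair $(z,w)$, and then to reduce the two leftover ``direct'' dilogarithms by combining Theorem~\ref{previouslymaster} with Landen's identity \eqref{Landenformula}. Concretely, I would set
\[ w = u \quad\text{and}\quad z = \frac{2u^2 - 3u + \sqrt{u(4-3u)}}{2(u-1)^2}, \]
and write $\bar z = \frac{2u^2 - 3u - \sqrt{u(4-3u)}}{2(u-1)^2}$ for the conjugate root, so that a short computation gives $z\bar z = \frac{u}{u-1}$ and $(1-uz)(1-u\bar z) = \frac{u^4 - 3u^3 + 4u^2 - 2u + 1}{(u-1)^2}$. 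The emergence of the quartic $u^4 - 3u^3 + 4u^2 - 2u + 1$ in this product is precisely what forces the denominators of the second and third arguments appearing in the statement, and it is the structural clue that fixes the choice of $(z,w)$.

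With this choice, the five terms produced by Roger's identity match the statement as follows: $\operatorname{Li}_2(zw)=\operatorname{Li}_2(uz)$ is the fourth term, $\operatorname{Li}_2\!\left(\frac{z(1-w)}{1-wz}\right)$ is the second term, and $\operatorname{Li}_2\!\left(\frac{w(1-z)}{1-wz}\right)$ is the third term. I would confirm the latter two matches by rationalizing the denominator $1-uz$ through its conjugate $1-u\bar z$: multiplying numerator and denominator by $1-u\bar z$ turns the denominator into a constant multiple of $u^4-3u^3+4u^2-2u+1$, and (after clearing the common factor $(u-1)^2$) the numerators collapse to the exact expressions $(u-1)\bigl(2u^3-4u^2+3u-\sqrt{u(4-3u)}\bigr)$ and $u\bigl(u^2-u+(u-1)\sqrt{u(4-3u)}+2\bigr)$ in the statement. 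Roger's identity then reads, after rearrangement,
\[ (\text{2nd}) + (\text{3rd}) + (\text{4th}) = \operatorname{Li}_2(u) + \operatorname{Li}_2(z) - \ln\!\left(\tfrac{1-z}{1-uz}\right)\ln\!\left(\tfrac{1-u}{1-uz}\right). \]

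To reach the four-term left-hand side, I would add the first term $\operatorname{Li}_2(\bar z)$ to both sides and invoke Theorem~\ref{previouslymaster}, which gives $\operatorname{Li}_2(z)+\operatorname{Li}_2(\bar z)=\operatorname{Li}_2\!\left(\frac{u}{u-1}\right)-\frac{3}{2}\ln^2\!\left(1-\frac{u-\sqrt{(4-3u)u}}{2(u-1)}\right)$; this supplies the $-\frac{3}{2}\ln^2(\cdots)$ summand of the target. The remaining pair $\operatorname{Li}_2(u)+\operatorname{Li}_2\!\left(\frac{u}{u-1}\right)$ collapses to $-\frac{1}{2}\ln^2(1-u)$ by Landen's identity \eqref{Landenformula}, supplying the $-\frac{1}{2}\ln^2(1-u)$ summand. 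What then remains is to show that the log-product $-\ln\!\left(\frac{1-z}{1-uz}\right)\ln\!\left(\frac{1-u}{1-uz}\right)$ coming out of Roger's identity equals the stated log-product, which once more follows by rationalizing the two arguments through $1-u\bar z$.

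The main obstacle is not the algebra of the arguments, which I have sketched above and which closes up cleanly, but the control of branches. The factor $\sqrt{u(4-3u)}$ and the logarithms $\ln(1-u)$ and $\ln(-u)$ implicit in the closed form each require a consistent branch choice, and both Landen's identity and Theorem~\ref{previouslymaster} are only valid on their respective domains; this is why the cleanest form of the argument lives on $|u|<1$. I would therefore fix principal branches throughout, check that all five Roger-identity arguments and the two arguments of the final log-product stay off the cut $[1,\infty)$ on this domain, and verify that the squared-logarithm simplifications that arise (of the type $(\ln a+\ln b)^2=0$ when $ab=1$, exactly as used in the proof of Theorem~\ref{previouslymaster}) hold with the chosen branches rather than merely modulo multiples of $2\pi i$. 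The extension to all complex $u$ then follows by analytic continuation, as with the earlier results.
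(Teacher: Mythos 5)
Your proposal is correct and follows essentially the same route as the paper's own (very terse) proof: Roger's five-term identity with $w=u$ and $z=\frac{2u^2-3u+\sqrt{u(4-3u)}}{2(u-1)^2}$, combined with Theorem~\ref{previouslymaster} and Landen's identity to absorb $\operatorname{Li}_2(u)+\operatorname{Li}_2\bigl(\tfrac{u}{u-1}\bigr)$; your identifications $z\bar z=\tfrac{u}{u-1}$ and $(1-uz)(1-u\bar z)=\tfrac{u^4-3u^3+4u^2-2u+1}{(u-1)^2}$ and the resulting term matchings all check out. The only difference is that you supply the details (including the Landen step and the branch discussion) that the paper leaves implicit, and your $z$ corrects what appears to be a stray ``$+2$'' in the numerator of the paper's stated substitution.
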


\begin{proof}
 Setting $w = u$ and $$ z = \frac{2u^2-3u \pm \sqrt{(4 - 3u)u} + 2}{2(u-1)^2}, $$ the desired result follows from the application 
 of Roger's identity to Theorem \ref{previouslymaster}. 
\end{proof}

\begin{lemma}\label{previously10p3}
 The equality of 
\begin{multline*}
 \operatorname{Li}_2\left(\frac{u - \sqrt{u(4 - 3u)}}{4u - 2u^2}\right) + 
 \operatorname{Li}_2\left(\frac{u + \sqrt{u(4 - 3u)}}{4u - 2u^2}\right) + \\ 
 \operatorname{Li}_2\left(\frac{1}{(u - 1)^2}\right) + \operatorname{Li}_2\left(\frac{u}{u - 1}\right) - \frac{\pi^2}{3} 
\end{multline*}
 and the closed form 
\begin{multline*}
 \text{ { \footnotesize $ \frac{3}{2} \ln^2 \left(\frac{u - \sqrt{-u (3 u - 4)} - 2}{2 (u - 1)}\right) 
 - \ln \left(\frac{-2 u^2 + 3 u + \sqrt{-u (3 u - 4)}}{4 u - 2 u^2}\right) \ln \left(\frac{2 
 u^2 - 3 u + \sqrt{-u (3 u - 4)}}{2 (u - 2) u}\right) $ } } \\ 
 \text{ { \footnotesize $ - \ln \left(\frac{-u + \sqrt{(4-3 u) u} + 2}{2 (u - 
 1)^2}\right) \ln \left(\frac{2 u^2 - 3 u - \sqrt{-u (3u-4)}}{2(u-1)^2}\right) - $}} \\ 
 \text{ { \footnotesize $ \ln \left(\frac{-u - \sqrt{(4 - 3 u) u} + 2}{2 (u - 1)^2}\right) \ln \left(\frac{2 
 u^2 - 3 u + \sqrt{-u (3 u - 4)}}{2(u-1)^2}\right)$ } }
\end{multline*}
 holds for complex $u$. 
\end{lemma}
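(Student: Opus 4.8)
The plan is to reduce this lemma to the already-established Lemma~\ref{previously10p1} by means of the standard one-variable functional equations for the dilogarithm, rather than by a fresh application of Roger's identity. The crucial observation is an algebraic coincidence linking the square-root arguments of the two lemmas: writing $X_{\pm} = \frac{-2u^2 + 3u \pm \sqrt{u(4-3u)}}{4u - 2u^2}$ for the arguments appearing in Lemma~\ref{previously10p1} (recall $\sqrt{-u(3u-4)} = \sqrt{u(4-3u)}$) and $Y_{\pm} = \frac{u \pm \sqrt{u(4-3u)}}{4u - 2u^2}$ for those appearing here, a direct computation gives $1 - X_{\pm} = Y_{\mp}$. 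Thus the two square-root dilogarithms in the present statement are exactly the Euler reflections of the two square-root dilogarithms in Lemma~\ref{previously10p1}, while the remaining two arguments $\frac{1}{(u-1)^2}$ and $\frac{u}{u-1}$ are the images of $(u-1)^2$ and $u$ under inversion and Landen's transform respectively.

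First I would divide the identity of Lemma~\ref{previously10p1} by $3$, so that every dilogarithm carries coefficient $1$. I would then transform each term of the resulting left-hand side: to $\operatorname{Li}_2(X_{+})$ and $\operatorname{Li}_2(X_{-})$ I apply Euler's reflection identity~\eqref{Eulerreflectionidentity} together with $1 - X_{\pm} = Y_{\mp}$, converting them into $-\operatorname{Li}_2(Y_{-})$ and $-\operatorname{Li}_2(Y_{+})$ plus logarithmic and $\frac{\pi^2}{6}$ contributions; to $\operatorname{Li}_2(u)$ I apply Landen's identity~\eqref{Landenformula} with $x = -u$, which yields $-\operatorname{Li}_2\!\left(\frac{u}{u-1}\right) - \frac{1}{2}\ln^2(1-u)$; and to $\operatorname{Li}_2((u-1)^2)$ I apply the standard inversion formula, producing $-\operatorname{Li}_2\!\left(\frac{1}{(u-1)^2}\right)$ together with a $-\frac{\pi^2}{6}$ term and a $\ln^2$ term. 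After these substitutions every dilogarithm on the left of Lemma~\ref{previously10p1} has been replaced by the negative of a dilogarithm whose argument is precisely one of $Y_{-}, Y_{+}, \frac{u}{u-1}, \frac{1}{(u-1)^2}$, so multiplying through by $-1$ produces exactly the four-term combination in the present statement.

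The remaining work, and the main obstacle, is the bookkeeping of the elementary terms, and in particular the handling of $\pi^2$ contributions across branches. Collecting the constants naively---two $+\frac{\pi^2}{6}$ from the reflections and one $-\frac{\pi^2}{6}$ from the inversion, set against the $-\frac{\pi^2}{2}$ coming from $\frac{1}{3}$ times the $-\frac{3\pi^2}{2}$ in Lemma~\ref{previously10p1}---leaves a residual multiple of $\pi^2$ that must be reconciled with the $-\frac{\pi^2}{3}$ appearing on the left-hand side here; this reconciliation is effected by the $i\pi$ offsets hidden in logarithms of negative quantities such as $\ln(-(u-1)^2)$ produced by the inversion formula, whose squares contribute further $\pi^2$ terms. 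Consequently the identity is genuinely branch-sensitive, and since it is claimed for all complex $u$ I expect the cleanest route is to fix consistent branches and verify the identity on an open region (e.g.\ $|u|<1$, mirroring Theorem~\ref{previouslymaster}) where all arguments lie in their principal domains, and then extend to general complex $u$ by analytic continuation exactly as in the proof of Theorem~\ref{maintheorem}. One could alternatively bypass the reduction and obtain the statement directly by applying Roger's five-term identity to Theorem~\ref{previouslymaster} with a suitable pair $(w,z)$, in the style of the neighboring proofs, but the reflection route above appears the most transparent.
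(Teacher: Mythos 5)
Your route is genuinely different from the paper's: the paper obtains this lemma by applying Euler's reflection and then Roger's five-term identity directly to Theorem~\ref{previouslymaster}, with $w = \frac{-u+\sqrt{-u(3u-4)}+2}{2(u-1)^2}$ and $z = \frac{-u-\sqrt{-u(3u-4)}+2}{2(u-1)^2}$ (for which $wz = \frac{1}{(u-1)^2}$ and the two cross-ratio arguments of the five-term relation collapse to $\frac{u\pm\sqrt{u(4-3u)}}{4u-2u^2}$), whereas you bypass the five-term relation entirely and pull the statement back to Lemma~\ref{previously10p1} using only one-variable functional equations. Your key observation checks out: with $X_{\pm}$ and $Y_{\pm}$ as you define them, $1 - X_{\pm} = \frac{(4u-2u^2)-(-2u^2+3u\pm\sqrt{u(4-3u)})}{4u-2u^2} = \frac{u \mp \sqrt{u(4-3u)}}{4u-2u^2} = Y_{\mp}$, and the remaining two arguments of Lemma~\ref{previously10p1} do map onto $\frac{1}{(u-1)^2}$ and $\frac{u}{u-1}$ under inversion and Landen's transform exactly as you say; this is a clean structural explanation of why the two lemmas are parallel, and it is arguably more elementary than a second invocation of Roger's identity. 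What your argument buys in transparency on the dilogarithm side it pays for on the elementary side: the printed closed form here is expressed in logarithms of $\frac{-u\pm\sqrt{(4-3u)u}+2}{2(u-1)^2}$ and $\frac{2u^2-3u\mp\sqrt{u(4-3u)}}{2(u-1)^2}$, the quantities native to Theorem~\ref{previouslymaster}, not in the $\ln(X_{\pm})\ln(Y_{\mp})$, $\ln^2(1-u)$, and $\ln^2(-(u-1)^2)$ terms your substitutions generate, so you still owe a branch-consistent identification of the two elementary expressions, including the cancellation of the residual multiple of $\pi^2$ that you flag. Your plan to fix branches on an open region and extend by analytic continuation is the right way to discharge this, and the debt is of the same order as the one the paper itself leaves implicit, but as written the proposal stops just short of that verification.
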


\begin{proof}
 Setting $$w = \frac{-u + \sqrt{-u (3 u - 4)} + 2}{2 (u - 1)^2} \ \ \ \text{and} \ \ \ z = \frac{-u - \sqrt{-u (3 u - 4)} + 2}{2 (u - 
 1)^2}, $$ the desired result follows through an application of Euler's identity and then Roger's identity to 
 Theorem \ref{previouslymaster}. 
\end{proof}

\begin{lemma}\label{previously10p4}
 The equality of 
\begin{multline*}
 \operatorname{Li}_2(k) - \operatorname{Li}_2(k^2) 
 - \operatorname{Li}_2\left(\frac{-2k^2 + k - \sqrt{-3k^2 + 2k + 1} + 1}{2 - 2k^2}\right) - \\ 
 \operatorname{Li}_2\left(\frac{-2k^2 + k + \sqrt{-3k^2 + 2k + 1} + 1}{2 - 2k^2}\right) 
\end{multline*}
 and the closed form 
\begin{multline*}
\frac{1}{2} \ln^2\left(\frac{-2}{-\sqrt{-3k^2 + 2k + 1} + k + 1}\right) 
+ \frac{1}{2} \ln^2\left(\frac{-2}{\sqrt{-3k^2 + 2k + 1} + k + 1}\right)\\ 
- \frac{1}{3} \ln^2\left(1 - \frac{2}{\sqrt{-3k^2 + 2k + 1} - k + 1}\right)
- \frac{1}{3} \ln^2\left(\frac{2}{\sqrt{-3k^2 + 2k + 1} + k - 1} + 1\right) \\
+ \ln\left(\frac{\sqrt{-3k^2 + 2k + 1} - k + 1}{2 - 2k^2}\right) 
\ln\left(\frac{\sqrt{-3k^2 + 2k + 1} + k - 1}{2k^2 - 2}\right)\\
+ \frac{1}{2} \pi^2 + \frac{1}{6} \ln^2(1 - k) - \frac{1}{3} \ln(1 - k) \ln k - \frac{1}{3} \ln^2 k
\end{multline*}
 holds for complex $k$. 
\end{lemma}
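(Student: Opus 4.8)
The plan is to reproduce, for this four-term relation, exactly the strategy that established Lemmas \ref{previously10p1}--\ref{previously10p3}: start from one of the three-term ``master'' identities of the previous section and combine it with Roger's five-term functional equation, inserting Euler's reflection identity \eqref{Eulerreflectionidentity}, Landen's identity \eqref{Landenformula}, and the duplication formula \eqref{duplicationformula} wherever an argument needs to be moved. The shape of the target points to Theorem \ref{generalforCampbell} (equivalently, the first form of Theorem \ref{previously8p1}) as the base: the two $-\tfrac13\ln^2(\cdots)$ terms together with the coefficients $\tfrac13$ and $\tfrac16$ in the closed form are characteristic of the threefold combination of Theorem \ref{previously8p1} after division by $3$, while the isolated cross term $\ln(\cdots)\ln(\cdots)$ and the two $+\tfrac12\ln^2(\cdots)$ terms are just what one expects from the product-of-logarithms right-hand side of Roger's relation followed by a single Landen reduction.

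The first concrete step is to reconcile the surds. Writing $S=\sqrt{-3k^2+2k+1}$ and factoring $-3k^2+2k+1=(1-k)(3k+1)$, one checks that $-3k^2+2k+1=(4-3u)u$ under the shift $u=k+\tfrac13$, so the radical $\sqrt{(4-3u)u}$ governing Theorem \ref{generalforCampbell} becomes precisely the radical $S$ appearing throughout the lemma. With the surd fixed, I would read off the pair $z,w$ for Roger's identity from the requirement that its right-hand side produce the recorded cross term and that two of its five dilogarithmic slots coincide with the square-root arguments of the master identity; the duplication formula, in the form $\operatorname{Li}_2(k^2)=2\operatorname{Li}_2(k)+2\operatorname{Li}_2(-k)$, is then the mechanism introducing the square argument $k^2$. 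The rational-denominator arguments $\tfrac{-2k^2+k\pm S+1}{2-2k^2}$ in the statement are to be confirmed by rationalizing the surd denominators — for example one finds $(\tfrac53-k)^2-S^2=4(k-\tfrac23)^2$ — and then applying the Landen/Euler transforms that collapse the denominators to $2-2k^2=2(1-k)(1+k)$.

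The bulk of the work, and the step I expect to be the main obstacle, is the purely elementary bookkeeping: each application of Landen, Euler, Roger, and duplication deposits a cluster of $\ln^2$ and $\ln\!\cdot\!\ln$ terms carrying surd-laden arguments, and one must show that after collecting everything the elementary side reduces to exactly the displayed combination with no residual terms. The delicate issue here is sign and branch tracking, since $S$ and the logarithms of its Möbius images are multivalued; this is precisely why the statement is phrased for complex $k$, where each square root and logarithm is unambiguously fixed, and I would carry out the entire reduction under that hypothesis. Finally, because every functional equation used is an identity of analytic functions on a common region, once the reduction is checked on an open set avoiding the cuts of $\operatorname{Li}_2$ and of $\ln$, the identity propagates to all admissible complex $k$ by analytic continuation, completing the proof.
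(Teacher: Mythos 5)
Your high-level plan coincides with the paper's: Lemma \ref{previously10p4} is obtained by applying Roger's five-term identity to Theorem \ref{generalforCampbell}. But the concrete execution goes wrong at the very first step, the choice of substitution. The equation $(4-3u)u=-3k^2+2k+1$ is quadratic in $u$ and has the two roots $u=k+\tfrac13$ and $u=1-k$; you pick the former, the paper uses the latter, and only the latter produces the stated identity, because the dilogarithm arguments in the lemma are not functions of the radical alone. Taking $w$ and $z$ to be the two conjugate arguments of $\mathcal{L}(u)$, namely $w,z=\tfrac12\left(2-u\pm\sqrt{(4-3u)u}\right)$, one finds $wz=(u-1)^2$ and $\frac{w(1-z)}{1-wz}=\frac{3u-2u^2+\sqrt{(4-3u)u}}{2u(2-u)}$; under $u=1-k$ these become $k^2$ and $\frac{-2k^2+k+\sqrt{-3k^2+2k+1}+1}{2-2k^2}$, exactly the arguments in the statement, with $\operatorname{Li}_2(k)$ then emerging from $\operatorname{Li}_2(u)=\operatorname{Li}_2(1-k)$. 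Under your $u=k+\tfrac13$ the same quantities are $(k-\tfrac23)^2$ and M\"obius images with denominator $2\left(k+\tfrac13\right)\left(\tfrac53-k\right)$ --- indeed your own check $\left(\tfrac53-k\right)^2-S^2=4\left(k-\tfrac23\right)^2$ is precisely the computation of $wz$ for the wrong root --- so your route lands on a different identity, not the one claimed.

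A second, related failure point: you propose to introduce $\operatorname{Li}_2(k^2)$ via the duplication formula $\operatorname{Li}_2(k^2)=2\operatorname{Li}_2(k)+2\operatorname{Li}_2(-k)$. That necessarily drags in $\operatorname{Li}_2(-k)$, which does not occur in the target and has nothing to cancel against among the terms you have in play. In the paper's route no duplication is needed: $k^2$ arises for free as $wz$. To repair your argument you would need to (i) replace $u=k+\tfrac13$ by $u=1-k$, (ii) pin down $w$ and $z$ as above rather than promising to ``read them off,'' and (iii) drop the duplication step; after that the remaining work is indeed the elementary logarithmic bookkeeping and branch tracking you describe.
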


\begin{proof}
 This follows from the application of Roger's identity to Theorem \ref{generalforCampbell}, with $$w=\left(\frac{-u + 
 \sqrt{(4 - 3u)u} + 2}{2}\right) \ \ \ 
 \text{and} \ \ \ 
 z=\left(\frac{-u - \sqrt{(4 - 3u)u} + 2}{2}\right), $$ 
 and by enforcing the substituion $u=1-k$. 
\end{proof}

\begin{lemma}\label{previously10p5}
 The equality of
\begin{multline*}
 - \operatorname{Li}_2 \left( (u - 1) u \right) 
 + \operatorname{Li}_2 \left( \frac{-u^3 + 5u^2 + \sqrt{u(4 - 3u)} (u^2 - u - 1) - 5u}{2(u - 1)^2} \right) 
 - \operatorname{Li}_2 \left( \frac{u}{u - 1} \right)\\ 
 + \operatorname{Li}_2 \left( \frac{-u^3 + 5u^2 - \sqrt{u(4 - 3u)} (u^2 - u - 1) - 5u}{2(u - 1)^2} \right) 
\end{multline*}
 and the closed form 
\begin{multline*}
 - \frac{3}{2} \ln^2 \left( 1 - \frac{u - \sqrt{u(4 - 3u)}}{2(u - 1)} \right)\\ 
 + \ln \left( \frac{-u + \sqrt{-u(3u - 4)} + 2}{2(u - 1)^2} \right) 
 \ln \left( \frac{-(u + \sqrt{-u(3u - 4)} - 2)}{2(u - 1)^2} \right) 
\end{multline*}
 holds for complex $u$. 
\end{lemma}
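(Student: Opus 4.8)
The plan is to follow the pattern of the proofs of Lemmas \ref{previously10p1}--\ref{previously10p4} and obtain this identity by a single application of Roger's five-term identity to Theorem \ref{previouslymaster}. Throughout I write $s := \sqrt{u(4-3u)} = \sqrt{-u(3u-4)}$ and use the $\operatorname{Li}_2$-form of Roger's identity already employed in the paper,
\begin{multline*}
 \operatorname{Li}_2(zw) - \operatorname{Li}_2(w) - \operatorname{Li}_2(z) + \operatorname{Li}_2\!\left(\frac{w(1-z)}{1-wz}\right) + \operatorname{Li}_2\!\left(\frac{z(1-w)}{1-wz}\right) \\ = -\ln\!\left(\frac{1-z}{1-wz}\right)\ln\!\left(\frac{1-w}{1-wz}\right).
\end{multline*}
The natural choice is to take $w$ and $z$ to be the two complicated arguments of the lemma's first display, namely $w = \frac{-u^3+5u^2+s(u^2-u-1)-5u}{2(u-1)^2}$ and $z = \frac{-u^3+5u^2-s(u^2-u-1)-5u}{2(u-1)^2}$, so that $\operatorname{Li}_2(w)$ and $\operatorname{Li}_2(z)$ supply those two terms directly.

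With this choice I would first verify the product identity $wz = (u-1)u$, which reduces to the polynomial identity $u(u^2-5u+5)^2 - (4-3u)(u^2-u-1)^2 = 4(u-1)^5$ (the leading terms already agree); this accounts for the $\operatorname{Li}_2((u-1)u)$ term. Next I would show that the two ``mixed'' arguments collapse onto the arguments of Theorem \ref{previouslymaster}. Using $1-wz = -(u^2-u-1)$ together with the factorization $u^3-3u^2+u+2 = (u-2)(u^2-u-1)$, one finds $1-z = \frac{(u^2-u-1)(u-2+s)}{2(u-1)^2}$; after cancelling the common factor $u^2-u-1$ and invoking the simplifications $P+Q(u-2) = 2(u-1)^2$ and $P(u-2)+s^2Q = -2u(u-1)^2(2u-3)$, with $P = -u(u^2-5u+5)$ and $Q = u^2-u-1$, the mixed arguments become $\frac{w(1-z)}{1-wz} = \frac{2u^2-3u-s}{2(u-1)^2}$ and $\frac{z(1-w)}{1-wz} = \frac{2u^2-3u+s}{2(u-1)^2}$, exactly the two arguments of Theorem \ref{previouslymaster}.

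The same factorizations make the logarithmic product on the right of Roger's identity collapse cleanly, since $\frac{1-z}{1-wz} = \frac{2-u-s}{2(u-1)^2}$ and $\frac{1-w}{1-wz} = \frac{2-u+s}{2(u-1)^2}$ are precisely the two logarithmic arguments of the claimed closed form. Substituting the Theorem \ref{previouslymaster} evaluation $\operatorname{Li}_2(\frac{2u^2-3u+s}{2(u-1)^2}) + \operatorname{Li}_2(\frac{2u^2-3u-s}{2(u-1)^2}) = -\frac{3}{2}\ln^2(1-\frac{u-s}{2(u-1)}) + \operatorname{Li}_2(\frac{u}{u-1})$ into Roger's identity eliminates those two dilogarithms in favour of $\operatorname{Li}_2(\frac{u}{u-1})$ and an extra $\ln^2$-term; negating the resulting equation then produces exactly the stated equality, the $\ln^2$ term reproducing $-\frac{3}{2}\ln^2(1-\frac{u-\sqrt{u(4-3u)}}{2(u-1)})$ and the Roger product reproducing the remaining $\ln\cdot\ln$ term.

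I expect the main obstacle to be twofold. The first is the algebraic bookkeeping just sketched — in particular the product identity $wz=(u-1)u$ and the clearing of the factor $u^2-u-1$ from the mixed arguments — which is conceptually routine but error-prone owing to the ubiquitous square root $s$ and the quartic denominators. The second and more delicate issue is the control of logarithm branches: the lemma is asserted for all complex $u$, whereas Roger's identity and Theorem \ref{previouslymaster} each hold only up to $2\pi i$-ambiguities in their $\ln$ and $\ln^2$ terms, so I would need to argue, as elsewhere in the paper by analytic continuation from a region where every argument is small and close to the positive reals, that the branch choices are globally consistent rather than merely matching the identity formally at the level of derivatives.
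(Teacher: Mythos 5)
Your proposal is correct and is essentially the paper's own argument: both obtain the identity by feeding the two-term evaluation of Theorem \ref{previouslymaster} into a five-term functional equation, and your algebraic verifications (in particular $wz=(u-1)u$ and the collapse of the mixed arguments to $\frac{2u^2-3u\pm\sqrt{u(4-3u)}}{2(u-1)^2}$) check out. The only difference is parametrization: the paper invokes Abel's form with $w,z=\frac{2-u\pm\sqrt{u(4-3u)}}{2(u-1)^2}$ (which produces reciprocals of the lemma's arguments and tacitly needs an inversion step), whereas your Rogers-form choice lands on the stated arguments directly, so your write-up is if anything the more complete of the two.
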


\begin{proof}
 This follows form the application of Abel's five-term identity to Theorem \ref{previouslymaster} 
 where $w = \frac{-u + \sqrt{-u (3 u - 4)} + 2}{2 (u - 1)^2}, z = \frac{-u - \sqrt{-u (3 u - 4)} + 2}{2 (u - 1)^2}$. 
\end{proof}

\section{Derivation of the Loxton--Lewin $\frac{\pi}{9}$-identities}
 In this section, we derive an elementary proof to the trio of three and four-term Loxton-Lewin dilogarithm $\frac{\pi}{9}$-identities. For reference, the three-term is identity given by, where $h = \frac{1}{2} \sec\left(\frac{\pi}{9}\right)$:
\begin{equation*}
\begin{aligned}
 \left( \operatorname{Li}_2(h) + \frac{1}{2} \ln(h) \ln(1 - h) \right) 
 + \left( \operatorname{Li}_2(h^2) + \frac{1}{2} \ln(h^2) \ln(1 - h^2) \right) - \\ 
 \frac{1}{3} \left( \operatorname{Li}_2(h^3) + \frac{1}{2} \ln(h^3) \ln(1 - h^3) \right)=7\pi^2/54, 
\end{aligned}
\end{equation*}
 Using the latter identity in Theorem \ref{previously8p1} and applying the Euler reflection identity in \eqref{Eulerreflectionidentity} 
 and then letting $u=1-k$, we have to show that the expression below yields the sought three-term relation in terms of $h$: 
\begin{multline*}
 - \operatorname{Li}_2\left(\frac{2 k^2 - k - \sqrt{-3 k^2 + 2 k + 1} - 1}{2 k^2}\right) - \\ 
 \operatorname{Li}_2\left(\frac{2 k^2 - k + \sqrt{-3 k^2 + 2 k + 1} - 1}{2 k^2}\right) 
 - \operatorname{Li}_2(1-k). 
\end{multline*}
 We then set 
\begin{equation*}
\begin{aligned}
\frac{\sqrt{-3 k^2 + 2 k + 1}}{2 k^2 - k - 1} = -i \sqrt{3}
\end{aligned}
\end{equation*}
 This gives the polynomial $3 k^3 - 3 k^1 - 1 =0$, which has the solution: $k=\frac{-2}{\sqrt{3}} \sin\left(\frac{\pi}{9}\right)$. 

 With reference to the polylogarithm multiplication formula 
\begin{equation}\label{previously11p4}
\begin{aligned}
 \sum_{m=0}^{p-1} \operatorname{Li}_s(z e^{2\pi i m/p}) = p^{1-s} \operatorname{Li}_s(z^p), 
\end{aligned}
\end{equation}
 we may obtain that 
\begin{equation}\label{previously11p5}
\begin{aligned}
 -\operatorname{Li}_2(r) + \frac{1}{3} \operatorname{Li}_2(r^3) = 
 \operatorname{Li}_2\left(r e^{2\pi i / 3}\right)+\operatorname{Li}_2\left(r e^{4\pi i / 3}\right) 
\end{aligned}
\end{equation}
 So now we derive $r$, which is the radius/norm, given by: 
\begin{equation*}
\frac{\sqrt{(2 k^2 - k - 1)^2 - (-3 k^2 + 2 k + 1)}}{2 k^2} = \sqrt{\frac{k - 1}{k}}=\frac{1}{h}
\end{equation*}
 To prove this, make a substitution $3 (1/(1-x^2))^3 - 3 (1/(1-x^2))^1 - 1 =0$. Here, $k= (1/(1-x^2))$ This factors
 as $(x^3 - 3 x - 1) (x^3 - 3 x + 1) = 0$. We choose the polynomial $(x^3 - 3 x - 1) $, which has a root $1.87...$ which is $1/h$. 
 Using the dilogarithm relation in \eqref{sigmasumLi}, we find that 
\begin{equation}\label{previously11p7}
 - \operatorname{Li}_2\left(\frac{2 k^2 - k- \sqrt{-3 k^2 + 2 k + 1} + 1}{2 k^2}\right) 
 - \operatorname{Li}_2\left(\frac{2 k^2 - k + \sqrt{-3 k^2 + 2 k + 1} - 1}{2 k^2}\right) 
\end{equation}
 reduces to $$ -\frac{1}{3} \operatorname{Li}_2(h^{-3}) + \operatorname{Li}_2(h^{-1}).$$ We then use the Landen identity in 
 \eqref{Landenformula}, with $\operatorname{Li}_2(1-k)$. By inspection, $\frac{k-1}{k} = h^{-2}$, which is the squared term of the 
 ladder, completing the proof, in so far that we have proven the existence of the identity. Additional simplifications of log terms are 
 required to bring it to the familiar form. This is done in the second proof below. 

\subsection{Derivation of the Loxton-Lewin four-term $\frac{\pi}{9}$-identities}
 To derive a four-term dilogarithm involving $2\cos\big(\frac{4\pi}{9}\big)$ as the base, we apply Theorem 
 \ref{previouslymaster} and set 
\begin{equation*}
 \frac{\sqrt{u (4-3u)}}{2 u^2 - 3 u} = i \sqrt{3}. 
\end{equation*}
 This has the solution $u=1 - \frac{2}{\sqrt{3}} \sin\left(\frac{4\pi}{9}\right)$ 
 within the unit circle, so Theorem \ref{previouslymaster} can be applied. The radius, in this case, is 
 $r=\sqrt{\frac{u}{u - 1}} = 2\cos\left(\frac{4\pi}{9}\right)$. 
 Consequently, we obtain that 
\begin{equation*}
 \frac{3}{2} \ln^2 \left( 1 - \frac{u - \sqrt{(4 - 3 u) u}}{2 (u-1)} \right) 
 - \operatorname{Li_2} (r^2) = \operatorname{Li}_2 (-r) - \frac{1}{3} \operatorname{Li}_2 (-r^3). 
\end{equation*}
 For the logarithm on the left-hand side, split it into the complex and real parts. (Note that because $u=-.137...$ is negative that 
 $\sqrt{(4 - 3 u)u}$ is a complex number.) Also note that $\sqrt{1 - \left( \frac{2(u - 1) - u}{2(u - 1)} \right)^2} = \sqrt{1 - 
 \left(\frac{(u - 2)}{2 (u - 1)}\right)^2} = \pm \frac{\sqrt{3u^2 - 4u}}{2(u - 1)}$, so therefore given the constraints that it is a primitive root 
 of unity. Performing the same process as in \eqref{previously11p4}, we observe that $\frac{(u - 2)}{2 (u - 1)}$ is a root of $8 x^3 - 6 x - 
 1=0$, or exactly $\cos(\frac{\pi}{9})$. So we have $3/2 \ln^2(e^{-i\frac{\pi}{9}})$ (the negative $-\pi i/9$ sign arises because we choose 
 the positive value of $\pm \frac{\sqrt{3u^2 - 4u}}{2(u - 1)}$ , as $u-1$ is negative), which gives the sought $-\pi^2/54$ value. 
 Using the duplication formula for $\operatorname{Li_2}$
 and elementary manipulations, we obtain the desired final form. 

 Mappings of the form $\frac{u}{u-1} \mapsto \frac{a+bx}{cx+d}$ provide all possible solutions that satisfy $(x^3 - 3 x - 1) (x^3 - 
 3 x + 1) = 0$. For our purposes, we only require that the ladder is composed of roots of the above polynomial, provided valid solutions 
 are chosen for Roger's identity, as not all will be applicable. For the third identity, the solution $2\cos\big(4\frac{\pi}{9} \big)$ is chosen. 
 It just so happens the solutions are trigonometric values. 

 Lewin and Loxton sought an analytic proof to the third and most difficult of the $\frac{\pi}{9}$-identities. It appears that the 
 above proof is the closest proof for this identity in terms of being closest to a purely analytic proof. In contrast, a proof due to Kirillov 
 \cite{Kirillov1995} relies on the assembly of the identity from various relations of trigonometric components and elementary 
 dilogarithm identities, whereas the radius method sidesteps this assembly process. 

 To derive the second four-term ladder that involves $\frac{1}{2} \sec(\frac{2\pi}{9})$, we again apply the latter identity in Theorem 
 \ref{previously8p1} and applying the Euler reflection identity, and we again set $u = 1 -k$, but now we set 
\begin{equation*}
\begin{aligned}
\frac{\sqrt{-3 k^2 + 2 k + 1}}{2 k^2 - k - 1} = i \sqrt{3}. 
\end{aligned}
\end{equation*}
 The solution $k=\frac{-2}{\sqrt{3}} \sin\left(\frac{2\pi}{9}\right)$ and the radius is $r=1+\frac{1}{2} \sec\left(\frac{\pi}{9}\right)$. We can 
 show that $\frac{1}{r} = \frac{1}{2} \sec\left(\frac{2\pi}{9}\right)$, by mimicking the derivation of \eqref{previously11p5}, and by 
 changing the sign from $h$ to $-h$, completes the proof. By analogy with the reduction of \eqref{previously11p7}, the additional 
 transformation $\operatorname{Li_2}(z) + \operatorname{Li_2}(-z)=1/2\operatorname{Li_2}(z^2)$ is needed to convert it to the 
 four-term ladder form. 

 Alternatively, all three can be proven at once with Roger's five-term identity, with the following substitutions, which are chosen 
 in such a way that Landen's identity can be applied: 
\begin{equation*}
 n = \frac{\sqrt{-3a^2 + 2a + 1} + a + 1}{2a},z = -\frac{n}{(n - 1)^2}, \quad w = n - n^2. 
\end{equation*}
 Written out, this the equality of the three-term dilogarithm combination 
\begin{multline*}
 - \operatorname{Li}_2 \left( \frac{a}{a - 1} \right) 
 - \operatorname{Li}_2 \left( \frac{2a^2 - a - \sqrt{-3a^2 + 2a + 1} - 1}{2a^2} \right) \\ 
 - \operatorname{Li}_2 \left( \frac{2a^2 - a + \sqrt{2a - 3a^2 + 1} - 1}{2a^2} \right) 
\end{multline*}
 and the closed form 
\begin{multline*}
 \frac{\pi^2}{6} 
 + \frac{1}{2} \ln^2 \left( \frac{-2a^2 + \sqrt{-3a^2 + 2a + 1} + a + 1}{2(a - 1)a} \right) + \\ 
 \frac{3}{2} \ln^2 \left( 1 - \frac{\sqrt{-3a^2 + 2a + 1} + a + 1}{2a} \right). 
\end{multline*}
 By setting 
\begin{equation*}
 \frac{\sqrt{(2 - 3a)a + 1}}{a(2a - 1) - 1} = \pm\sqrt{3}i, 
\end{equation*}
 we obtain the radius $\sqrt{\frac{a - 1}{a}}$. 

 A third proof relies on the application of Hill's five-term identity 
\begin{equation*}
\operatorname{Li}_2(zw) - \operatorname{Li}_2(w) - \operatorname{Li}_2(z) - \operatorname{Li}_2\left(\frac{z(w - 1)}{1 - z}\right) - \operatorname{Li}_2\left(\frac{w(z - 1)}{1 - w}\right) - \frac{1}{2} \ln^2\left(\frac{1 - w}{1 - z}\right) = 0
\end{equation*}
 Letting $w=r e^{\frac{2\pi i}{3}},z=r e^{\frac{4\pi i}{3}}$ or $w=r e^{\frac{\pi i}{3}},z=r e^{\frac{5\pi i}{3}}$. 

 After simplifying, note: $\frac{r (r^2 + 4r + 1)}{2 (r^2 + r + 1)} \pm \frac{i \sqrt{3} \, r (r^2 - 1)}{2 (r^2 + r + 1)}$. 
 So, we need to solve 
\begin{equation*}
 \frac{r (r^2 + 4r + 1)}{2 (r^2 + r + 1)} = \frac{1}{2}
\end{equation*}

 The solutions are $r=\frac{1}{2} \sec\left(\frac{\pi}{9}\right), \quad \frac{-1}{2} \sec\left(\frac{2\pi}{9}\right), \quad 
 \frac{-1}{2} \sec\left(\frac{4\pi}{9}\right)$. Then we use \eqref{previously7p5} to find the closed-form value of the real part, which when 
 combined with additional simplifications, thus gives the value of $k$ and proving the identities, using \eqref{previously11p5}. 
 Note: $\frac{1}{2} \sec\left(\frac{4\pi}{9}\right)$ will not work, as it is outside of the convergence. 

 Observe that the
 latter two proofs use five-term identities once, whereas the first proof uses the 
 cubic integral. As a consequence of the connection between the integral, the hypergeometric series, and the dilogarithm identity, 
 we also have that 
\begin{equation*}
 \cos\left(\frac{\pi}{9}\right) \sum_{n=0}^{\infty} \left( \frac{-8\cos\left(\frac{\pi}{9}\right)}{9} \right)^n 
 \frac{ \left( \frac{3}{2} \right)_n (1)_n^2 }{ \left( \frac{5}{3} \right)_n \left( \frac{4}{3} \right)_n 
 (2)_n } = \ln^2\left(2 \cos\left(\frac{4\pi}{9}\right)\right) - \frac{\pi^2}{27} 
 \end{equation*} 
 The $2\cos(\frac{4\pi}{9})$ identity is regarded as the most difficult of the three. In this case, this identity is the easiest, as it requires the 
 least simplification of log terms due to $u$ lying within the unit circle, so Theorem \ref{previouslymaster} can be immediately used. 

 \section{Two-term complex identities of the imaginary part} 
 Similar to two-term real dilogarithms, two-term dilogarithms of only the imaginary part appear to be uncommon. Common examples 
 of two-term identities of the complex part include 
 $\operatorname{Li}_2\left(\frac{1}{2} - \frac{i}{2}\right) = -i G + \frac{5\pi^2}{96} - \frac{\ln^2 2}{8} + \frac{i \pi \ln 2}{8}$. 

\begin{example}
 Using Theorem \ref{previouslymaster}, set 
\begin{equation*}
 \frac{2 u^2 - 3 u - \sqrt{u (-3 u + 4)}}{2 (u - 1)^2} = -u 
\end{equation*}
  The solution is given by $u^3-u-1=0$ (the Plastic constant cubic as before), where the complex root in the second   quadrant is chosen.  
 Performing the usual simplification gives:  
\begin{equation*}
  -4i \pi \ln(u) + \frac{5 \ln^2(u)}{2} - \frac{5\pi^2}{3} = \operatorname{Li}_2(u) + \frac{1}{2} \operatorname{Li}_2(u^2)  
\end{equation*}
  Here, the duplication formula for the dilogarithm is implicitly used.  
\end{example}

\begin{example}
 Using Theorem \ref{previouslymaster}, set:
 \begin{equation*}
\frac{2 u^2 - 3 u - \sqrt{u (-3 u + 4)}}{2 (u - 1)^2} = 2 - u
\end{equation*}
 The solution is given by 
 $u^4 - 4 u^3 + 7 u^2 - 7 u +4=0$ where $u\approx0.45258 - 1.12087 i$. After much simplification, we have:
\begin{equation*}
 \frac{1}{2} \operatorname{Li}_2(2u - u^2) + \operatorname{Li}_2\left(\frac{u - 2}{(u - 1)^2} + u\right) = -\frac{13}{2} \ln^2(u - 1) - 7 i \pi \ln(u - 1) - i \pi \ln(2 - u) + \frac{7\pi^2}{4}
\end{equation*}
\end{example}

\begin{example}
 Using Lemma \ref{previously10p3}, set: 
\begin{equation*}
 \displaystyle \frac{\sqrt{u(4 - 3u)} + u}{4u - 2u^2} = \frac{1}{z}, \\[10pt] 
 \displaystyle \frac{1}{(u - 1)^2} = z 
\end{equation*}
 This has the solution: $u^6 - 7 u^5 + 19 u^4 - 24 u^3 + 13 u^2 - 2 u + 1=0$ in which the root approximated 
 by $u= -0.00219275 - 0.295542i$ is chosen. The identity follows (using the Bloch-Wigner formula for brevity), 
 where $a_1=\left(\frac{u^3 - 4u^2 + 5u - 1}{2 - u}\right), a_2=\left(\frac{u}{u - 1}\right)$. Thus: 
\begin{equation*}
 -\arg(1-a_1)\ln(|a_1|)-\arg(1-a_2)\ln(|a_2|)=\Im\operatorname{Li}_2(a_1) + \Im\operatorname{Li}_2(a_2)
\end{equation*}
\end{example}

\begin{example}
 Using Lemma \ref{previously10p2}, set: 
\begin{equation*}
\left\{
\begin{aligned}
\frac{(u-1) \left(2 u^3 - 4 u^2 + 3 u - \sqrt{u(4 - 3 u)}\right)}{2 \left(u^4 - 3 u^3 + 4 u^2 - 2 u + 1\right)} = z\\
\frac{u \left(u^2 - u + (u-1) \sqrt{u(4-3u)} + 2\right)}{2 \left(u^4 - 3 u^3 + 4 u^2 - 2 u + 1\right)} = \frac{1}{z}
\end{aligned}
\right.
\end{equation*}
 Using the above procedure, we have that 
\begin{equation*}
 - \Im \operatorname{Li}_2(v_1) - \Im \operatorname{Li}_2(v_2) = \arg(1 - v_1) \ln(|v_1|) + \arg(1 - v_2) \ln(|v_2|). 
\end{equation*}
 Applying $u=1-y$ to the final result, we obtain 
\begin{equation*}
v_1=\left( \frac{-(y - 1)^2 (y^3 + y^2 + 1)}{y (y^2 - y + 1)} \right),
v_2=\left( \frac{-(y - 1)^2 (y^3 + y + 1))}{y^2 (y^2 - y + 1)} \right), 
\end{equation*}
 where $y \approx 0.18866 - 0.83324 i$ is a root of $ y^8 - y^7 + y^6 - y^4 + y^2 - y + 1 = 0$. We conjecture that this is the only 
 8th-degree, two-term dilogarithm combination that admits a closed form and that it is the highest degree identity of this form. 
\end{example}

\section{Miscellaneous results} 
 Through the use of the Landen identity in \eqref{Landenformula}, and by solving for $u$ and $z$, this leads to: 
\begin{equation*}
\begin{aligned}
\frac{-2u^2 + 3u + \sqrt{-u (3u - 4)}}{4u - 2u^2} = z,
\frac{-2u^2 + 3u - \sqrt{-u (3u - 4)}}{4u - 2u^2} = \frac{z}{z - 1}
\end{aligned}
\end{equation*}
 The solutions to $u^3 - 4 u^2 + 3 u + 1 = 0$ correspond to the same root structure as trigonometric values of multiples of 
 $\frac{\pi}{7}$. The valid solution $u=2\cos (\frac{3\pi}{7})+1$ proves one of
 the Watson two-term identities through some additional manipulations.

\subsection{A cubic single-term identity}
Using Lemma \ref{previously10p2}, set:
\begin{equation*}
\begin{aligned}
\frac{(u-1) \left(2 u^3 - 4 u^2 + 3 u - \sqrt{u(4 - 3 u)}\right)}{2 \left(u^4 - 3 u^3 + 4 u^2 - 2 u + 1\right)} = z
\\
\frac{2 u^2 - 3 u - \sqrt{u(4 - 3 u)}}{2 (u-1)^2} = 1 - z
\end{aligned}
\end{equation*}
 The solution $u=r_1$ is the real root of $u^3 - 2 u^2 + u + 1=0$. And $z=1/r_2$, where $r_2$ is the complex root located 
 in the fourth quadrant.
 Consequently, we obtain that 
\begin{equation*}
\begin{aligned}
2 \ln\left(\frac{r_2 - 1}{r_2}\right) \ln\left(\frac{(r_1 - 1) r_2^2}{2 r_1 r_2 + r_1}\right) 
+ 3 \ln^2\left(\frac{1 - r_1}{r_2}\right) 
+ \ln^2(1 - r_1) 
+ \frac{\pi^2}{3} 
= -4 \Re \operatorname{Li}_2\left(r_1 - \frac{r_1}{r_2}\right)
\end{aligned}
\end{equation*}
 Using Lemma \ref{previously10p1} and setting $u=p^3$, where $p$ is the 
 real root of $u^3-u-1=0$ also gives the single-term dilogarithm in terms of the plastic constant/ratio: 
\begin{equation*}
\begin{aligned}
\frac{\pi^2}{12} + 6 \ln^2(p) + \frac{1}{6} \left( \operatorname{arctan} \left( \frac{\sqrt{p (3 p - 1)}}{p^2 - 2 p^3} \right) \right)^2 
= \Re\mathrm{Li}_2\left( \frac{-p^4 + p^2 i \sqrt{p (3p - 1)}}{2} \right)
\end{aligned}
\end{equation*}

\subsection{A sextic two-term identity}
 Using Lemma \ref{previously10p4}, we set $k=\frac{1}{2}\sec \frac{2\pi}{7}$, and this allows the application of Watson's formula 
 for $\operatorname{Li}_2(k) - \operatorname{Li}_2(k^2)$, so that 
\begin{equation*}
 \frac{-2k^2 + k - \sqrt{-3k^2 + 2k + 1} + 1}{2 - 2k^2}=r_2, \frac{-2k^2 + k + \sqrt{-3k^2 + 2k + 1} + 1}{2 - 2k^2}=r_1, 
\end{equation*}
 where $r_2 = -0.42812..., r_1=1.87316...$ Or exactly, $x^6 - 4 x^5 + 5 x^4 - 2 x^3 + x^2 - x - 1 =0$. 
 Using Roger's L-function, we have that 
\begin{equation*}
\frac{\pi^2}{7} = 
 \operatorname{Li}_2\left(\frac{1}{r_1}\right) + \operatorname{Li}_2\left(\frac{r_2}{r_2 - 1}\right) + 
 \frac{1}{2} \ln\left(\frac{1}{r_1}\right) \ln\left(1 - \frac{1}{r_1}\right) + \frac{1}{2} \ln\left(\frac{r_2}{r_2 - 
 1}\right) \ln\left(\frac{1}{1 - r_2}\right). 
\end{equation*}
 Now, if we were to set $x = \frac{1}{1-y}$, then we would yield Bytsko's two-term sextic dilogarithm identity \cite{Bytsko1999}, 
 which is composed of the two real roots of $y^6 - 7 y^5 + 19 y^4 - 28 y^3 + 20 y^2 - 7 y + 1=0$. He asks if it is 
 related to the Watson identities, as his result was experimentally derived. We can now answer in the affirmative.

\subsection{A two-term identity over $\mathbb{Q}(\sqrt{2})$}
 Bytsko \cite{Bytsko1999} also conjectured a two-term identity over $\mathbb{Q}(\sqrt{2})$ through asymptotic analysis. As 
 above, Bytsko noted that a proof is lacking. It can be proven by iterating Lemma \ref{previously10p4} again via Roger's 
 five-term identity. We thus have a new dilogarithm identity composed of $$ z(k) = \frac{k^2 + (k + 1) \sqrt{-3k^2 + 2k + 1} - 
 2k - 1}{2(k^2 - k - 1)} $$ and $$ w(k) = \frac{k^2 - (k + 1) \sqrt{-3k^2 + 2k + 1} - 2k - 1}{2(k^2 - k - 1)}. $$ Now, we have to 
 show that for $\operatorname{Li}_2(k) - \operatorname{Li}_2(k^2) - \operatorname{Li}_2\left( \frac{k^3}{(k - 1)(k + 1)^2} \right)$ the 
 dilogarithm parts cancel, in which we let $k=2^{-1/2}$. Substituting into $z(k)$, we have $\frac{1}{2} \left( 3 - \sqrt{2} - \sqrt{2\sqrt{2} - 
 1} \right) $, which is a root of $x^4 - 6 x^3 + 13 x^2 - 10 x + 1=0$. This agrees with Bytsko's result. 

\subsection{A ladder related to $\rm{Cl}_2\big(\tfrac{\pi}3\big)$}
 The Gieseking constant may be defined as the singular value of the Clausen function $\rm{Cl}_2\big(\tfrac{\pi}3\big)$. Using Lemma 
 \ref{previously10p3}, we solve 
\begin{equation}\label{previously13p7}
 \frac{(u-1) \left(2 u^3 - 4 u^2 + 3 u - \sqrt{u(4 - 3 u)}\right)}{2 \left(u^4 - 3 u^3 + 4 u^2 - 2 u + 1\right)} = \frac{1}{2} 
\end{equation}
 We also have $\sqrt{u (4 - 3u)} = \frac{2u^3 - 5u^2 + 2u + 2}{2 - u}$. And noting that $\frac{2 u^3 - 6 u^2 + 
 4 u + 1}{(u - 2)(u - 1)^2} = e^{\pi i / 3}$ is equivalent to \eqref{previously13p7}. 
 Thus, where $r_1=-0.141... - 0.215.. i$ or exactly as a root of $ x^4 - 4 x^3 + 14 x^2 + 4 x + 1=0$: 
\begin{equation*}
2 \space \Im\operatorname{Li}_2(r_1) - \frac{1}{2} \space\Im\operatorname{Li}_2(r_1^2) + \Im\operatorname{Li}_2\left(e^{i\pi/3}\right) = -2 \arg(1 - r_1) \ln|r_1| + \arg(1 - r_1^2) \ln|r_1|
\end{equation*}
 Moreover, we have that 
\begin{equation*}
4 \space\Im\operatorname{Li}_2(r_2) - 
 \frac{1}{2} \space\Im\operatorname{Li}_2(r_2^2) - 
 \frac{11}{6}\space \Im\operatorname{Li}_2\left(e^{i\pi/3}\right) = -4 \arg(1 - r_2) \ln|r_2| + \arg(1 - r_2^2) \ln|r_2|
\end{equation*}
Where $r_2=0.34861... + 0.20127... i$ or exactly as a root of $3 x^4 - 15 x^3 + 28 x^2 - 15 x + 3 = 0$. Technically, $\rm{Cl}_2\big(\tfrac{\pi}3\big)$ is a polylogarithmic value, so it is not a strict ladder. 

\subsection{A modified Watson $\frac{\pi}{7}$-dilogarithm identity}
 Bytsko \cite{Bytsko1999} also found a variant of Watson's $\frac{\pi}{7}$-identities. Bytsko proved this variant through elementary 
 manipulations from the classic Watson $\frac{\pi}{7}$-identities. It can also be proved using Lemma \ref{previously10p5}. 
 In this direction, we begin by setting 
\begin{equation*}
 \frac{-u^3 + 5u^2 - 5u}{2(u - 1)^2} = \frac{1}{2}. 
\end{equation*}
 The solution is $u = 2 - \frac{1}{2} \sec\left(\frac{3\pi}{7}\right)$, which gives the modified 
 two-term identity, through an appropriate substitution. 

\subsection{8th-degree single-term identity}
 Using Lemma \ref{previously10p5}, we solve (letting $u=1-y$) the equality such that 
\begin{equation*}
 \frac{y^3 + 2 y^2 + \sqrt{-3 y^2 + 2 y + 1} (-y^2 + y + 1) - 2 y - 1}{2 y^2} = \frac{z}{z - 1}, \quad \frac{1}{y(y - 1)} = z. 
\end{equation*}
 The single-term identity is as follows:
\begin{equation*}
\begin{aligned}
2 \operatorname{Li}_2\left(\frac{y}{y - 1}\right)
+ \frac{\pi^2}{3}
+ \frac{3}{2} \ln^2\left(\frac{w}{y}\right)
- \ln\left(\frac{w}{y^2}\right) \ln\left(-\frac{w}{y^2} + \frac{y + 1}{y^2}\right)\\
+ \frac{1}{2} \ln^2\left(\frac{1}{y} - 1\right)
+ \frac{1}{2} \ln^2\left((1 - y) y\right)
- \frac{1}{2} \ln^2\left(\frac{y^2 - y}{y^2 - y - 1}\right) = 0
\end{aligned}
\end{equation*}
 Where $y\approx1.3945 - 0.6970 i$ is a solution of $y^8 - 3 y^7 + y^6 + 4 y^5 - 6 y^3 + 3 y + 
 1=0$ and $w=\approx 1/2 - 1.14411 i$, which is a root of $w^8 - 4w^7 + 7w^6 - 7w^5 + 4w^4 - 
 w^3 + 2w^2 - 2w + 1 = 0$.

 It is only in exceptional cases that $\mathbb{Q}$-linear and two-term combinations of dilogarithm values with real and algebraic 
 arguments admit a closed-form evaluation, with reference to the Watson identities of this form, together with the cases 
 considered by Lewin corresponding to $v^a+v^b=1$. The sextic case is even more exceptional, and we have found only a single 
 instance of this case, through what may be regarded as a fortuitous application of one of the Watson identities. There is no 
 general method to determine all possible two-term dilogarithm evaluations. Informally, each such evaluation may be thought of as 
 requiring a unique manipulation of elementary identities. 

 We have demonstrated that all of the two-term dilogarithm identities due to Bytsko \cite{Bytsko1999}, as well 
 as the $\frac{\pi}{9}$-identities, can be proven through various manipulations and transformations of the 
 integrals in \eqref{w1w2forCampbell} in conjunction with applications of the radius method. 

\section{New Ladders}
 We derive a three-term ladder identity, given in \eqref{previously8p8} below, with arguments in $\mathbb{Q}(\sqrt{-7})$, 
 and this ladder relation appears to be new. We also introduce a derivation of the Lewin-Loxtin ladders using an elementary 
 analytic approach, which, to the best of our knowledge, is the first derivation of its kind. But finding new ladder relations is difficult, as 
 computer searches and other methods have exhausted the known options. 

\subsection{A three-term ladder relation}\label{subsection3ladder}
 We can rewrite Theorem \ref{previouslymaster} so that the three-term dilogarithm combination 
\begin{multline*} 
 \operatorname{Li}_2\left(\frac{2u^2 - 3u - \sqrt{u(4-3u)}}{2(u - 1)u}\right) - \\ 
 \operatorname{Li}_2\left(\frac{2u^2 - 3u - \sqrt{u(4 - 3u)}}{2(u - 1)^2}\right) 
 + \operatorname{Li}_2\left(\frac{u}{u - 1}\right) 
\end{multline*}
 admits the closed form 
 $$ - \frac{1}{2} \ln^2\left(\frac{2u^2 - 3u + \sqrt{u(4 - 3u)}}{-2(u - 1)^2}\right) - \frac{\pi^2}{6}
 + \frac{3}{2} \ln^2\left(1 - \frac{u - \sqrt{u(4 - 3u)}}{2(u - 1)}\right). $$
 We proceed to apply the substitutions 
\begin{align*}
\frac{2u^2 - 3u - \sqrt{-u(3u - 4)}}{2(u - 1)u} & = \left(\frac{u}{u - 1}\right)^m,\\
\frac{2u^2 - 3u + \sqrt{-u(3u - 4)}}{2(u - 1)^2} & = \left(\frac{u}{u - 1}\right)^{-m}, \ \text{and} \\
\frac{2u^2 - 3u - \sqrt{-u(3u - 4)}}{2(u - 1)^2} & = \left(\frac{u}{u - 1}\right)^{m + 1}. 
\end{align*}
 Substituting $u = \frac{h}{h-1}$, we have the ladder identity 
\begin{equation*}
\begin{aligned}
- \mathrm{Li}_2(h^{-m}) - \mathrm{Li}_2(h^{m+1}) + \mathrm{Li}_2(h) = \frac{3}{2} \ln^2\left( \frac{1 - h^{-m}}{1 - h} \right)
\end{aligned}
\end{equation*}
 for arbitrary powers, where $(h^m + h - 3) h^{m + 1} + 1 = 0$. 

\begin{example}
 For $m=-4$ we have that $h^5 + 2 h^4 + 4 h^3 + 3 h^2 + 2 h + 1=0$. 
 This can be proved by applying Roger's identity and then Landen's identity. 
 A similar approach can be applied to many of Lewin's ladder identities. 
\end{example}

\subsection{Five-term ladder relations, part 1}\label{section5term1}
 In this section we derive identities are are not as trivially verifiable as the above example. Some well-known dilogarithm bases include 
 the basis such that $$u^6-3u^4-4u^3-3u^2+1=0$$ due to Lewin \cite{Lewin1991text}, the basis such that $$u^6-u^5 - 
 u^3-u+1=0$$ due to Abouzahra and Lewin \cite{AbouzahraLewin1985}. These bases motivate the following dilogarithm identities. 
 Since the sextic polynomial in Theorem \ref{firstformatladder} is symmetric, we obtain a reduction to a cubic polynomial identity 
 with a fundamental discriminant of level $29$. 

\begin{theorem}\label{firstformatladder}
 For the unique root $h$ in the interval $(0, 1)$ of $x^6-x^5-x^4-6 x^3-x^2-x+1 = 0$, the valid ladder relation 
 $$ \operatorname{Li}_{2}\left( h^{6} \right) + 3 \operatorname{Li}_{2}\left( h^{4} \right) 
 - 8 \operatorname{Li}_{2}\left( h^{3} \right) - 5 \operatorname{Li}_{2}\left( h^2 \right) 
 - 8 \operatorname{Li}_{2}(h) - 4 \ln^2(h) = - 5 \zeta(2) $$ 
 holds. 
\end{theorem}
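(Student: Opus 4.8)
The plan is to exploit the palindromic structure of the defining sextic, as flagged in the discussion preceding the statement. Since $x^{6}-x^{5}-x^{4}-6x^{3}-x^{2}-x+1$ is self-reciprocal, its roots occur in reciprocal pairs $\{h,h^{-1}\}$, so the root $h\in(0,1)$ is paired with $h^{-1}>1$. Dividing the polynomial by $x^{3}$ and setting $t=x+x^{-1}$ collapses it to the cubic $t^{3}-t^{2}-4t-4=0$, whose discriminant is $-16\cdot 29$, accounting for the stated level-$29$ structure. This reciprocal pairing is the mechanism that lets the ladder close: each time the classical dilogarithm inversion relation $\operatorname{Li}_{2}(x)+\operatorname{Li}_{2}(x^{-1})=-\frac{\pi^{2}}{6}-\frac{1}{2}\ln^{2}(-x)$ is applied to a power $h^{k}$, it contributes a fixed $-\zeta(2)$ together with an elementary $\ln^{2}$ term, and the coefficient $-5$ on the right-hand side of the target relation will emerge as the net weight of these pairings.

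First I would generate the primary dilogarithm relations among the powers $\operatorname{Li}_{2}(h^{k})$ using the five-term machinery already assembled earlier in this paper. Following the construction of subsection~\ref{subsection3ladder}, I would take a four-term combination from Lemmas~\ref{previously10p1}--\ref{previously10p5} and specialize the free parameter $u$ so that, after the substitution $u=h/(h-1)$ and repeated use of $t^{3}-t^{2}-4t-4=0$, each algebraic argument simplifies to a power $h^{m}$. The three prescribed ratios of subsection~\ref{subsection3ladder}, namely $(u/(u-1))^{\pm m}$ and $(u/(u-1))^{m+1}$, provide the template for hitting the exponents $3,4,6$, while the duplication formula~\eqref{duplicationformula} feeds in the even powers $h^{2},h^{4},h^{6}$ from the odd-power and sign-flipped data $\operatorname{Li}_{2}(\pm h)$, $\operatorname{Li}_{2}(\pm h^{2})$, $\operatorname{Li}_{2}(\pm h^{3})$.

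Next I would form a $\mathbb{Q}$-linear combination of these primary relations designed to annihilate every dilogarithm whose argument is negative or is not a power of $h$, leaving only $\operatorname{Li}_{2}(h)$, $\operatorname{Li}_{2}(h^{2})$, $\operatorname{Li}_{2}(h^{3})$, $\operatorname{Li}_{2}(h^{4})$, $\operatorname{Li}_{2}(h^{6})$ with the prescribed coefficients $-8,-5,-8,3,1$. Euler's reflection~\eqref{Eulerreflectionidentity} and Landen's identity~\eqref{Landenformula} carry out the conversions $h^{k}\mapsto 1-h^{k}$ and $h^{k}\mapsto h^{k}/(1+h^{k})$ needed to align the arguments appearing across the different primary relations, and the resulting right-hand side is a sum of $\ln^{2}$ and $\ln\cdot\ln$ terms together with a rational multiple of $\zeta(2)$. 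This is precisely the reduction of the ladder~\eqref{ladderdefinition} to the elementary form~\eqref{Dscalar} that certifies validity.

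The hard part will be the logarithmic bookkeeping. After $t^{3}-t^{2}-4t-4=0$ (equivalently the defining sextic) is used to rewrite every logarithm's argument in terms of $h$, one must verify that the accumulated $\ln^{2}$ and cross terms collapse to exactly $4\ln^{2}(h)$ while the rational multiple of $\zeta(2)$ collects to $-5\zeta(2)$, with no stray dilogarithm surviving. This cancellation is not automatic; it is forced by the self-reciprocal symmetry, which makes the conjugate arguments occurring in the five-term identities coincide and makes the inversion contributions telescope to a clean integer multiple of $\zeta(2)$. I would therefore separate the weight-two data from the weight-one data, first confirming through the functional equations that the dilogarithm part reduces to the five stated terms, and only afterward verifying the purely elementary identity that the surviving logarithms equal $4\ln^{2}(h)$; the delicate point throughout is that the many $\ln^{2}$ contributions must cancel in pairs dictated by $h\leftrightarrow h^{-1}$ rather than individually.
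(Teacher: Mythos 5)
Your plan assembles the right general toolkit (five-term identities, duplication, reflection, Landen, the Rogers-type normalization $\operatorname{Li}_2+\tfrac12\ln\cdot\ln$), and your reduction of the palindromic sextic to $t^3-t^2-4t-4=0$ with discriminant $-16\cdot 29$ is correct and matches the remark preceding the theorem. But the proposal has a genuine gap: it never produces the one step that carries all the content, namely the explicit specialization that forces every dilogarithm argument to become a power of a single number $h$ satisfying \emph{this particular} sextic. You write that you would ``specialize the free parameter $u$ so that \dots each algebraic argument simplifies to a power $h^{m}$'' and then ``form a $\mathbb{Q}$-linear combination \dots designed to annihilate every dilogarithm whose argument is not a power of $h$,'' but the existence of such a specialization and such a combination is exactly what must be demonstrated; there is no a priori reason a given sextic supports a valid ladder, and the coefficients $1,3,-8,-5,-8$ cannot be recovered from the plan as stated. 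The paper pins this down concretely: starting from the $A$-, $D$- and $K$-expressions of Theorem~\ref{maintheorem} (the sextic-integral identity in $\sqrt{4-3u^{2}}$, not the cubic-integral Lemmas~\ref{previously10p1}--\ref{previously10p5} in $\sqrt{u(4-3u)}$ that you invoke), it applies the four-term relation \eqref{wellknown4term}, imposes the pair of conditions
\begin{equation*}
 \frac{\left(u^2 + \sqrt{4 - 3u^2} - 2\right)^2}{u^2 (u + 1)^2} = \frac{u + 1}{1 - u}, \qquad
 \frac{\left(u^2 + \sqrt{4 - 3u^2} - 2\right)^2}{(u - 1)^2 u^2} = \frac{(u + 1)^3}{(1 - u)^3},
\end{equation*}
verifies they have a common solution, and sets $h=\sqrt{(1+u)/(1-u)}$, which is precisely where the sextic $x^6-x^5-x^4-6x^3-x^2-x+1$ comes from. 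Your substitution template $u=h/(h-1)$ with ratios $(u/(u-1))^{\pm m}$ is the one from Section~\ref{subsection3ladder} and leads to a different base equation, not this one.

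A second, smaller problem is the mechanism you assign to the constant $-5\zeta(2)$. Since $h\in(0,1)$, all arguments $h^{k}$ in the ladder lie in $(0,1)$ and no reciprocal pair $\{h^{k},h^{-k}\}$ actually appears; the inversion formula and the $h\leftrightarrow h^{-1}$ pairing you lean on are not the operative source of the constant. In the paper's derivation the $\tfrac{5\pi^2}{12}$ (i.e.\ $\tfrac52\zeta(2)$, rescaled to $-5\zeta(2)$ in the final form) accumulates from the $\pi^2$-constants of the reflection and four-term relations applied along the way. So the telescoping-by-inversion picture would need to be replaced, not merely made precise, before the logarithmic bookkeeping you describe could be carried out.
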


\begin{proof}
 From the the $A$- and $D$-expressions in Theorem \ref{maintheorem}, by applying the four-term relation 
 in \eqref{wellknown4term} to the $K$-expression in Theorem \ref{maintheorem}, this gives us an evaluation for 
\begin{multline*}
 \text{ {\footnotesize $ \frac{1}{2} \operatorname{Li}_2\left(\left(\frac{u^2 + \sqrt{4 - 3 u^2} - 2}{u(u + 
 1)}\right)^2\right) - 4 \operatorname{Li}_2\left(\frac{u^2 + \sqrt{4 - 3 u^2} - 2}{u(u + 1)}\right) - 
 4 \operatorname{Li}_2\left(\frac{1 - u}{u + 1}\right) $} } \\ 
 \text{ { \footnotesize$ - \frac{1}{2} \operatorname{Li}_2\left(\left(\frac{u^2 + \sqrt{4 - 3 u^2} - 2}{(u - 1) u}\right)^2\right) + 
 4 \operatorname{Li}_2\left(\frac{u^2 + \sqrt{4 - 3 u^2} - 2}{(u - 1) u}\right) 
 + \frac{3}{2} \operatorname{Li}_2\left(\left(\frac{1 - u}{u + 1}\right)^2\right) 
 $}}. 
\end{multline*}
 From this evaluation, by mimicking how the three-term $\operatorname{Li}_{2}$-relation in Section \ref{subsection3ladder} was used to 
 construct a ladder, we set $$ \frac{\left(u^2 + \sqrt{4 - 3u^2} - 2\right)^2}{u^2 (u + 1)^2} = \frac{u + 1}{1 - u} \ \ \ \text{and} \ \ \ 
 \frac{\left(u^2 + \sqrt{4 - 3u^2} - 2\right)^2}{(u - 1)^2 u^2} = \frac{(u + 1)^3}{(1 - u)^3}, $$ yielding the same solutions. So, by setting 
 $h = \sqrt{\frac{1 + u}{1 - u}}$, we obtain the root of $x^6 - x^5 - x^4 - 6 x^3 - x^2 - x + 1 =0$ specified. This gives us that 
\begin{multline*}
 \frac{5}{4} \ln(h^2)\ln(1 - h^2) + \frac{4}{2} \ln(h)\ln(1 - h) - \frac{3}{4} \ln(h^4)\ln(1 - h^4) + \frac{4}{2} \ln(h^3)\ln(1 - 
 h^3) - \\ 
 \frac{1}{4} \ln(h^6)\ln(1 - h^6) + \frac{5}{2} \operatorname{Li}_2(h^2) + 
 4 \operatorname{Li}_2(h) - \frac{3}{2} \operatorname{Li}_2(h^4) + 
 4 \operatorname{Li}_2(h^3) - \frac{1}{2} \operatorname{Li}_2(h^6) = \frac{5\pi^2}{12}, 
\end{multline*}
 and, after much simplificatoin, this can be shown to be equivalent to the desired result. 
\end{proof}

\subsection{A six-term ladder relation}
 An analogue of our approach from Section \ref{section5term1} leads us to construct and prove a new six-term ladder relation, 
 as below. 

\begin{theorem}\label{theoremnew6term}
 Let $h$ denote the unique root in the interval $(0, 1)$ of $x^6-3 x^5+5 x^4-8 x^3+5 x^2-3 x+1 = 0$. Then the ladder relation 
\begin{multline*}
 \operatorname{Li}_2\left(h^8\right) + 3 \operatorname{Li}_2\left(h^6\right) - 
 8 \operatorname{Li}_2\left(h^4\right) - 8 \operatorname{Li}_2\left(h^3\right) + 
 3 \operatorname{Li}_2\left(h^2\right) - 8 \operatorname{Li}_2(h)-6 \ln ^2(h) = \\ 
 -5 \zeta(2) 
\end{multline*}
 holds. 
\end{theorem}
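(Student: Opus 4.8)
The plan is to follow the derivation of Theorem~\ref{firstformatladder} verbatim in its opening moves, taking as input the master evaluation of Theorem~\ref{maintheorem}. That is, I would first combine the $A$- and $D$-expressions and apply the four-term relation~\eqref{wellknown4term} to the $K$-expression, producing the same linear combination of dilogarithms in the three basic arguments $\tfrac{u^2+\sqrt{4-3u^2}-2}{u(u+1)}$, $\tfrac{u^2+\sqrt{4-3u^2}-2}{(u-1)u}$, and $\tfrac{1-u}{u+1}$ together with their squares. The objective is then to impose multiplicative constraints that collapse all of these arguments onto a single base, turning the relation into a genuine ladder.

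Mimicking Section~\ref{subsection3ladder}, I would set the two squared arguments equal to powers of $\tfrac{u+1}{1-u}$ and put $h=\sqrt{\tfrac{1+u}{1-u}}$, so that $\tfrac{u+1}{1-u}=h^2$. In the five-term case the exponents were chosen to realize the powers $h^6,h^4,h^3,h^2,h$; here they must be retuned to realize the six target powers, and reaching $h^8$ is likely to require one additional application of the duplication formula~\eqref{duplicationformula} beyond what Theorem~\ref{firstformatladder} needs. The two constraints are compatible because their ratio is forced by the identity $\tfrac{u^2+\sqrt{4-3u^2}-2}{u(u+1)}\big/\tfrac{u^2+\sqrt{4-3u^2}-2}{(u-1)u}=\tfrac{u-1}{u+1}$, and imposing them rationalizes to a single polynomial equation in $u$. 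Transcribed through $h=\sqrt{\tfrac{1+u}{1-u}}$ this should be exactly $x^6-3x^5+5x^4-8x^3+5x^2-3x+1=0$, whose unique root in $(0,1)$ I would confirm is the intended base.

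With the base fixed, the next step is to convert every dilogarithm to a positive power of $h$. Terms coming from $\tfrac{1-u}{u+1}=h^{-2}$ and its square are handled by the inversion formula and Euler's reflection~\eqref{Eulerreflectionidentity}, while squared arguments are split or combined via~\eqref{duplicationformula}; this should collect the dilogarithms with the stated coefficients $1,3,-8,-8,3,-8$ on $h^8,h^6,h^4,h^3,h^2,h$. Since the defining sextic is palindromic, the substitution $t=h+h^{-1}$ reduces it to the cubic $t^3-3t^2+2t-2=0$, and I would use this cubic as the single algebraic relation against which all logarithmic products $\ln(1-h^k)\ln h$ and $\ln^2(1-h^k)$ are reduced.

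The hard part will be the logarithmic bookkeeping rather than the structural set-up. Each inversion, reflection, and duplication introduces $\ln^2$ and cross-product terms, and the claim is that modulo the cubic $t^3-3t^2+2t-2=0$ these collapse to the single term $-6\ln^2 h$, with the scattered $\tfrac{\pi^2}{6}$ contributions from reflection summing to the constant $-5\zeta(2)$. I would organize this by first extracting the pure $\ln^2 h$ coefficient, then verifying that every mixed product $\ln(1-h^j)\ln(1-h^k)$ cancels after repeated use of the factorizations of $1-h^k$ implied by the cubic, and finally checking the rational multiple of $\zeta(2)$. This final simplification, which the five-term proof dispatches only as ``after much simplification,'' is where essentially all of the work lies.
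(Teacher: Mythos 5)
Your plan matches the paper's proof essentially step for step: the paper likewise reuses the six-term dilogarithm evaluation from the proof of Theorem~\ref{firstformatladder} (in its duplicated form), collapses all arguments onto a single base by imposing the power constraints in \eqref{commonsolutionfor6} --- namely $\frac{u^2+\sqrt{4-3u^2}-2}{u(u+1)} = \left(\frac{1-u}{1+u}\right)^4$ together with its companion, which is precisely the ``retuning'' of exponents you left unspecified --- and then extracts the sextic for $h$ and carries out the same logarithmic bookkeeping you describe. The one ingredient you did not pin down is that the correct choice is the \emph{fourth} powers of $\frac{1\mp u}{1\pm u}$ applied to the unsquared arguments, but your observation that the two constraints are compatible because their ratio is forced by an algebraic identity is exactly the ``common solution'' property the paper invokes.
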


\begin{proof}
 We exploit the property whereby the following equalities 
\begin{equation}\label{commonsolutionfor6}
 \frac{u^2 - \sqrt{4 - 3u^2} - 2}{u(u - 1)} = \left( \frac{u + 1}{1 - u} \right)^4 
 \ \ \ \text{and} \ \ \ 
 \frac{u^2 + \sqrt{4 - 3u^2} - 2}{u(u + 1)} = \left( \frac{1 - u}{1 + u} \right)^4 
\end{equation}
 have a common solution. In this direction, the six-term dilogarithm equation 
 involved in our proof of Theorem \ref{firstformatladder} gives us a corresponding evaluation for 
\begin{multline*}
 \operatorname{Li}_2\left( \frac{u^2 + \sqrt{4 - 3u^2} - 2}{-u(u + 1)} \right) 
 - 3 \operatorname{Li}_2\left( \frac{u^2 + \sqrt{4 - 3u^2} - 2}{u(u + 1)} \right) 
 - 4 \operatorname{Li}_2\left( \frac{1 - u}{u + 1} \right) \\ 
 + \operatorname{Li}_2\left( \frac{u^2 - \sqrt{4 - 3u^2} - 2}{-u(u + 1)} \right) 
 - 3 \operatorname{Li}_2\left( \frac{u^2 - \sqrt{4 - 3u^2} - 2}{u(u + 1)} \right) 
 + \frac{3}{2} \operatorname{Li}_2\left( \left( \frac{1 - u}{u + 1} \right)^2 \right), 
\end{multline*}
 yielding, from the common solutions for \eqref{commonsolutionfor6}, the equality of 
 $$ - \frac{1}{2} \operatorname{Li}_2(h^8) 
 - \frac{3}{2} \operatorname{Li}_2(h^6) 
 + 4 \operatorname{Li}_2(h^4) 
 + 4 \operatorname{Li}_2(h^3) 
 -\frac{3}{2} \operatorname{Li}_2(h^2) 
 + 4 \operatorname{Li}_2(h) $$ and 
\begin{multline*}
 \text{ { \footnotesize $ \frac{3}{4} \ln(h^2) \ln(1 - h^2)
 - 2 \ln(h) \ln(1 - h)
 - 2 \ln(h^4) \ln(1 - h^4) 
 - 2 \ln(h^3) \ln(1 - h^3) + $ } } \\ 
 \text{{ \footnotesize $ \frac{3}{4} \ln(h^6) \ln(1 - h^6) + \frac{1}{4} \ln(h^8) \ln(1 - h^8) + \frac{5\pi^2}{12} $ }}
\end{multline*}
 for $h$ as specified, and this can be shown
 to provide an equivalent version of the desired result. 
\end{proof}

\begin{remark}
 Since the sexic polynomial in Theorem \ref{theoremnew6term} is symmetric, it can be reduced to a cubic case that has a fundamental 
 discriminant of 13. This is the only cubic ladder with arguments in $\mathbb{Q}(\sqrt{13})$, to the best of our knowledge. 
\end{remark}

 By analogy with the ladder constructions due to Lewin and Campbell, is is possible to generate ladders for arbitrary powers with the 
 base equation 
 \begin{equation*}
(h - 1) h^{2 m-1} = (h (h + 6) + 1) h^{m-1} +h - 1 
\end{equation*}
 The $m=4$ case recovers Theorem \ref{theoremnew6term} and the $m=3$ case recovers Theorem \ref{firstformatladder}. 
 This base equation appears new and not seen the literature consulted and does not readily follow from the Lewin identities or others. 

\subsection{Using the ``radius method'' to derive new ladders}
 The radius method, as described earlier to derive analytic proofs of the trio of $\frac{\pi}{9}$ identities, can be applied here as well. 
 We apply the five-term dilogarithm identity involved in the proof of Theorem \ref{theoremnew6term}, setting 
\begin{equation*}
\frac{\sqrt{4-3u^2}}{ u^2 - 2} = -i \sqrt{3}
\end{equation*}
\\
The radius is $r = \sqrt{\frac{u - 1}{u + 1}} \quad \text{proof:} \quad r^2 = \frac{(u - 1)^2}{u^2 - 1} = \frac{u - 1}{u + 1} $. 
 Explicitly, we have that 
\begin{equation}\label{radiusfornewladder1}
 r=\frac{1}{4} \left( -1 + \sqrt{33} - \sqrt{2 \left(9 - \sqrt{33} \right)} \right) 
\end{equation}
 Consequently, we have that 
\begin{equation*}
\begin{aligned}
\operatorname{Li}_2\left( \frac{u^2 - \sqrt{4 - 3 u^2} - 2}{-(u + 1) u} \right)
+ \operatorname{Li}_2\left( \frac{u^2 + \sqrt{4 - 3 u^2} - 2}{-(u + 1) u} \right)
= -\operatorname{Li}_2(r) + \frac{1}{3} \operatorname{Li}_2(r^3)\\
\operatorname{Li}_2\left( \frac{u^2 - \sqrt{4 - 3 u^2} - 2}{(u + 1) u} \right)
+ \operatorname{Li}_2\left( \frac{u^2 + \sqrt{4 - 3 u^2} - 2}{(u + 1) u} \right)
= -\operatorname{Li}_2(-r) + \frac{1}{3} \operatorname{Li}_2(-r^3)
\end{aligned}
\end{equation*}
 The resultant ladder relation is such that 
\begin{multline*}
 -\frac{1}{2} \operatorname{Li}_2(r^6) - \frac{1}{2} \operatorname{Li}_2(r^4) + \frac{4}{3} \operatorname{Li}_2(r^3) + 
 \frac{11}{2} \operatorname{Li}_2(r^2) - 4 \operatorname{Li}_2(r) + 
 \frac{11}{4} \ln(r^2)\ln(1 - r^2) - \\ 
 2 \ln(r)\ln(1 - r) - \frac{1}{4} \ln(r^4)\ln(1 - r^4) + \frac{2}{3} \ln(r^3)\ln(1 - r^3) - \frac{1}{4} \ln(r^6)\ln(1 - r^6) = -\frac{\pi^2}{36} 
\end{multline*}
 Using a mapping of $u^4 - 5 u^2 + \frac{16}{3} = 0$, where $u = \frac{1+r^2}{1 - r^2}$, we can determine that 
 all possible radii $r$ for ladders derived in the above manner are positive solutions of $(r^4 - r^3 - 6 r^2 - r + 
 1) (r^4 + r^3 - 6 r^2 + r + 1) = 0$, so there are four possible ladders, corresponding to $r_1$, $r_2$, $r_3$, and $r_4$. 
 These ladders may be thought of as being analogues of the trio of ladders related to the value $\frac{\pi}{9}$. Two of these are 
 redundant, given that $r_1=r_3^{-1}$ and $r_2=r_4^{-1}$. 

 Setting $\frac{\sqrt{4 - 3u^2}}{2 - u^2} = \sqrt{3} \, i$, gives the second radius, with 
\begin{equation}\label{radiusfornewladder2}
 r_2 = \frac{1}{4} \left(1 + \sqrt{33} - 
 \sqrt{2 \left(9 + \sqrt{33} \right)} \right)
\end{equation}
 The associated ladder relation is such that 
\begin{multline*}
 \frac{\operatorname{Li}_2(r_2^6)}{6} - \frac{\operatorname{Li}_2(r_2^4)}{2} - \frac{4 \operatorname{Li}_2(r_2^3)}{3} + \frac{7 
 \operatorname{Li}_2(r_2^2)}{2} + 4 \operatorname{Li}_2(r_2) 
 + \frac{7 \ln(r_2^2) \ln(1 - r_2^2) }{4} \\ 
 + 2 \ln(r_2) \ln(1 - r_2) 
 - \frac{\ln(r_2^4) \ln(1 - r_2^4) }{4} - \frac{2 \ln(r_2^3) \ln(1 - r_2^3) }{3} 
 + \frac{ \ln(r_2^6) \ln(1 - r_2^6) }{12} = \frac{11\pi^2}{36} 
 \end{multline*}
 We emphasize that our ladder relations involving the value $r$ in \eqref{radiusfornewladder1} and the value $r_{2}$ in 
 \eqref{radiusfornewladder2} appear to be new and not in any of the literature consulted. Gordon and Mcintosh 
 \cite{GordonMcIntosh1997} note the difficulty about finding dilogarithm ladders that are not of the typical Lewin format. Indeed, 
 this may be seen as requiring innovative approaches, and we encourage further applications of our radius method. 
 
\subsection*{Acknowledgements}
 The author thanks John Maxwell Campbell for help with the exposition in this paper and for help with the typesetting and formatting 
 for this paper.

 \

Cetin Hakimoglu-Brown

Berkeley, CA

{\tt mathemails@proton.me}

\end{document}